\newcommand{\DRAFT}[1]{#1}
\newcommand{\FINAL}[1]{ }
\newcommand{\Prob}{\mathsf{p}}
\newtheorem{myprop}{Proposition}[section]
\newtheorem{myassum}{Assumption}[section]
\newcommand{\eR}{\mathbb{R}}
\def \y{\mathbf{y}}
\def \x{\mathbf{x}}
\def \w{\mathbf{w}}
\def \u{\mathbf{u}}
\def \bLambda{\bm{\Lambda}}
\def \H{\mathbf{H}}
\def \KL{\mathcal{KL}}
\def \argmin{\arg \min}
\def \argmax{\arg \max}
\def \dd {\mathrm{d}}
\def \D{\mathbf{D}}
\def \bSigma{\bm{\Sigma}}
\def \m{\mathbf{m}}
\def \Diag{\text{Diag}}
\def \EE{\mathbb{E}}
\renewcommand{\leq}{\ensuremath{\leqslant}}
\renewcommand{\argmin}[2]{\ensuremath{\underset{\substack{{#1}}}%
{\text{\rm argmin}}\;\;#2 }}
\renewcommand{\argmax}[2]{\ensuremath{\underset{\substack{{#1}}}%
{\text{\rm argmax}}\;\;#2 }}
\newcommand{\compresslist}{
  \setlength{\itemsep}{1pt}
  \setlength{\parskip}{0pt}
  \setlength{\parsep}{0pt}}
\newcommand{\f}[1]{{\color{black}#1}}
\renewcommand{\eR}{\mathbb{R}}
\newcommand{\X}{\mathbf{X}}
\newcommand{\G}{{\Gamma}}
\title{A Variational Bayesian Approach for Image Restoration. Application to Image Deblurring with Poisson-Gaussian Noise.}
\author{Yosra~Marnissi, Yuling~Zheng, Emilie~Chouzenoux and~Jean-Christophe~Pesquet
\thanks{Y. Marnissi and E. Chouzenoux are with University Paris-Est, LIGM, UMR CNRS 8049, 77454 Marne-la-Vall\' ee, 
France (e-mail: first.last@u-pem.fr). Y. Zheng is with IBM Research China. Building 19, Zhongguancun Software Park, 8 Dongbeiwang Western Road, Haidian District, Beijing, 11 100193, CN, 
China (e-mail: ylzh@cn.ibm.com). J.-C. Pesquet is with Center for Visual Computing, CentraleSupelec, University Paris-Saclay, Grande Voie des Vignes,
92295 Ch\^atenay-Malabry, France (e-mail: jean-christophe@pesquet.eu). This work was supported by the CNRS Imag'in project under
grant 2015OPTIMISME, by the CNRS MASTODONS project under grant 2016TABASCO, and by the Agence Nationale de la
Recherche under grant ANR-14-CE27-0001 GRAPHSIP.}
}
\begin{document}
 
\maketitle

\begin{abstract}
In this paper, a methodology is investigated for signal recovery in the presence of non-Gaussian noise. In contrast with regularized minimization approaches often adopted in the literature, in our algorithm the regularization parameter is reliably estimated from the observations.  As the posterior density of the unknown parameters is analytically intractable, the estimation problem is derived in a variational Bayesian framework where the goal is to provide a good approximation to the posterior distribution in order to compute posterior mean estimates. 
Moreover, a majorization technique is employed to circumvent the difficulties raised by the intricate forms of the non-Gaussian likelihood and of the prior density. We demonstrate the potential of the proposed approach through comparisons with state-of-the-art techniques that are specifically tailored to signal recovery in the presence of mixed Poisson-Gaussian noise. Results show that the proposed approach is efficient and achieves performance comparable with other methods where the regularization parameter is manually tuned from the ground truth.
\end{abstract}

{\small \emph{Keywords}:
inverse problems, restoration, Variational Bayesian methods, parameter estimation, Poisson-Gaussian noise, majorization, minimization.
}

\section{Introduction} \label{Sect:Intro}
One of the most challenging tasks in signal processing is restoration, where one aims at providing an accurate estimate of the original signal from degraded observations. These degradations may arise due to various phenomena which are often unavoidable in practical situations. Undesirable blurring may be introduced by the atmosphere or may also stem from the intrinsic limitations of the acquisition system characterized by its point spread function. Furthermore, data can be perturbed by noise which can be viewed as a parasite signal added to the information of interest, hence altering the extraction of this information. Noise may originate from various sources. On the one hand, sensors generally suffer from internal fluctuations referred to electrical or thermal noise. This type of noise is additive, independent of the signal of interest, and it can be modeled by a Gaussian distribution. On the other hand, it has been experimentally proven that in many situations, the signal of interest may suffer from noise with more complex characteristics. In fact, many devices lead to measurements distorted by heteroscedastic noise i.e. the characteristics of the noise depend on the characteristics of the unknown signal \cite{healey1994radiometric, tian2001analysis,janesick2007photon,azzari2014gaussian,clar:Liu_2014,Chakrabarti,boubchir2014undecimated}. For example, to better reflect the physical properties of optical communication, the noise remains additive Gaussian but its variance is assumed to be dependent on the unknown signal \cite{moser2012capacity}. Signals can also be corrupted with multiplicative noise \cite{aubert2008,buades2005non,dong2013convex,huang2009new} such as the speckle noise which commonly affects synthetic aperture radar (SAR), medical ultrasound and optical coherence tomography images \cite{parrilli2012nonlocal}, as well as with impulsive noise \cite{cai2010fast}. A mixture of Gaussian and impulsive noise has also been studied in \cite{xiao2011restoration,yan2013restoration}. Furthermore, in applications such as astronomy, medicine, and fluorescence microscopy where signals are acquired via photon counting devices, like CMOS and CCD cameras, the number of collected photons is related to some non-additive counting errors resulting in a shot noise \cite{boulanger:hal-01274259,healey1994radiometric,tian2001analysis,janesick2007photon,petropulu}. The latter is non-additive, signal-dependent and it can be modeled by a Poisson distribution \cite{salmon2014poisson,setzer2010deblurring,jeong2013frame,harizanov2013epigraphical,dupe2012deconvolution,chaux2009nested
,bonettini2015new,anthoine2011some,bonettini2011alternating,Altmann2016, rond2015poisson, lefkimmiatis2013poisson, harmany2009sparse, danielyan2011deblurring, pustelnik2011parallel}. In this case, when the noise is assumed to be Poisson distributed, the implicit assumption is that Poisson noise dominates over all other noise kinds. Otherwise, the involved noise is a combination of Poisson and Gaussian (PG) components \cite{Julliand2015,Kingsbury2014,foi2014,Makitalo_M_2012_picassp_Poisson_Fdueuigat,foi2013,Luisier2010a,clar:Li_2014,clar:Lanteri_2005,clar:Jezierska_2012,clar:Jezierska_2011,clar:Foi_2008,clar:Chouzenoux_2015,kittisuwan2016medical,Calatroni2016}. Most of the existing denoising methods only consider this noise as independent Gaussian, mainly because of the difficulties raised in handling other noise sources than the Gaussian one. In this paper, we focus on signal recovery beyond the standard additive independent Gaussian noise assumption. 
\subsection{State-of-the-art}
Existing strategies for solving inverse problems often define the estimate as a minimizer of an appropriate cost function. The latter is composed of two terms: the so-called data fidelity term whose role is to make the solution consistent with the observation and the regularization term that incorporates prior information about the target signal so as to ensure the stability of the solution \cite{demoment1989image}. Several algorithms have been proposed to tackle the problem of restoration for signals corrupted with non-Gaussian noise by using minimization approaches. 
For example, in \cite{clar:Repetti_2012}, a method is proposed to restore signals
degraded by a linear operator and corrupted with an additive Gaussian noise having a signal-dependent variance. An early work in~\cite{clar:Snyder_1993} and more recent developments in~\cite{clar:Lanteri_2005,clar:Benvenuto_2008,clar:Jezierska_2012,clar:Li_2014,Kingsbury2014,clar:Chouzenoux_2015,baji2016blind}
have proposed to restore signals corrupted with mixed PG noise using different approximations of the PG data fidelity term. In all these approaches, the regularization parameter allows a tradeoff  to be performed
between fidelity to the observations and the prior information. Too small values of this parameter may lead to noisy estimates while too large values yield oversmoothed solutions. Consequently, the problem of setting a proper value of the regularization parameter should be addressed carefully and may depend on both the properties of the observations and the statistics of the target signal. When ground truth is available, one can choose  the value of the regularization parameter that 
gives the minimal residual error evaluated through some suitable metric. However, in real applications where no ground truth is available, the problem of selecting the regularization parameter remains an open issue especially in situations where the images are acquired under poor conditions i.e. when the noise level is very high. Among existing approaches dealing with
regularization parameter estimation, the works in \cite{clar:Ramani_2008,clar:Eldar_2009,PesquetBenazzaChaux09,deledalle2014stein,clar:Giryes_2011,clar:Almeida_2013,clar:Hansen_2006}
have to be mentioned.
However, most of the mentioned methods were developed under the assumption of a Gaussian noise and their extension to the context of non-Gaussian noise is not easy. One can however cite the works in \cite{Luisier2010a,Makitalo_M_2012_picassp_Poisson_Fdueuigat} 
proposing efficient estimators in the context of denoising
i.e. problems that do not involve linear degradation. Other approaches can be found in \cite{bertero2010discrepancy, zanni2015numerical} proposing efficient estimates in the specific case of a Poisson likelihood.

To address the shortcomings of these methods, one can adopt the Bayesian framework. In particular, Bayesian estimation methods based on Markov Chain Monte Carlo (MCMC) sampling algorithms have been recently extended to inverse problems involving non-Gaussian noise \cite{ying2012blind,Altmann2015,murphy2011joint,chaari}. However, despite good estimation performance that has been obtained, such methods remain computationally expensive for large scale problems. Another alternative approach is to rely on variational Bayesian approximation (VBA) \cite{vsmidl2006variational,zhengTIP2014,babacan2011variational,chen2014variational,dremeau2012boltzmann}. Instead of simulating from the true posterior distribution, VBA approaches aim at approximating the intractable true posterior distribution with a tractable one from which the posterior mean can be easily computed. These methods can lead generally to a relatively low computational complexity when compared with sampling based algorithms. 

\subsection{Contributions}
In this paper, we propose a VBA estimation approach for signals degraded by an arbitrary linear operator and corrupted with non-Gaussian noise. One of the main advantages of the proposed method is that it allows us to jointly estimate the original signal and the required regularization parameter from the observed data by providing good approximations of the Minimum Mean Square Estimator (MMSE) for the problem of interest. While using VBA, the main difficulty arising in the non-Gaussian case is that the involved likelihood and the prior density may have a complicated form and are not necessarily conjugate. To address this problem, a majorization technique is adopted providing a tractable VBA solution for non-conjugate distributions. 
Our approach allows us to employ a wide class of a priori distributions accounting for the possible sparsity of the target signal after some appropriate linear transformation. Moreover, it can be easily applied to several non Gaussian likelihoods that have been widely used\cite{clar:Chouzenoux_2015,Kingsbury2014, clar:Murtagh_1995,clar:Starck_1998}.
In particular, experiments in the case of images corrupted by PG noise showcase the good performance of our approach compared with methods using the discrepancy principle for estimating the regularization parameter \cite{bardsley2009regularization}. Moreover, we propose variants of our method leading to a significant reduction of the computational cost while maintaining a satisfactory restoration quality.   
\subsection{Outline}
This paper is organized as follows. In Section \ref{sec:Pro_state}, we formulate the considered signal recovery problem in the Bayesian framework and we present a short overview on the variational Bayesian principle. In Section \ref{sec:Algo}, we present our proposed estimation method based on VBA. In Section \ref{Sect:results}, we provide simulation results together with comparisons with state-of-the-art methods in terms of image restoration performance and computation time. Finally, some conclusions are drawn in Section~\ref{sec:Conclu}.
\section{Problem statement}
\label{sec:Pro_state}
\subsection{Bayesian formulation}
In this paper, we consider a wide range of applications where the degradation model can be formulated as an inverse possibly ill-posed problem as follows:
\begin{equation}
\left( \forall i \in \{1, \ldots, M \} \right), \quad y_i= \mathcal{D}(\left[\mathbf{H} \overline{\x}\right]_i),
\end{equation}
where $\overline{\x} \in \mathbb{R}^N$ is the signal of interest, $\mathbf{H} \in \mathbb{R}^{M\times N}$ is the linear operator typically modeling a blur, a projection, or a combination of both degradations, $[\mathbf{H}\overline{\x}]_i$ denotes the $i$-th component of $\mathbf{H}\overline{\x}$ , $\y=(y_i)_{1 \leqslant i\leqslant M } \in \mathbb{R}^M$ is the measured data and $\mathcal{D}$ is the noise model that may depend on the target data. The objective is to find an estimator $\hat{\x}$ of $\overline{\x}$ from $\mathbf{H}$ and $\y$. The neg-log-likelihood  $\phi$ of the observations reads 
\begin{equation}
(\forall \mathbf{x} \in \mathbb{R}^N)\quad
\phi(\mathbf{x}; \y)=-\ln \Prob(\y|\x) = \sum\limits_{i=1}^M \phi_i(\left[ \mathbf{H} \mathbf{x} \right]_i;y_i). 
\label{e:loglikelihood}
\end{equation}
Depending on the noise statistical model $\mathcal{D}$, $\phi_i$ may take various forms \cite{janesick2007photon,Makitalo_M_2012_picassp_Poisson_Fdueuigat,cai2010fast,yan2013restoration}. In particular, it reduces to a least squares function for additive Gaussian noise.
In the Bayesian framework, we apply regularization  by assigning a prior distribution to the data $\x$ to be recovered. In this paper, we adopt the following flexible expression of the prior density of $\overline{\mathbf{x}}$:
\begin{align}
\Prob(\mathbf{x}\mid \gamma)= \tau \gamma^{\frac{N}{2\kappa} } \exp \Big(-\gamma \sum\limits_{j=1}^J\Vert \mathbf{D}_j\mathbf{x} \Vert^{2 \kappa} \Big),
\label{prior_sig}
\end{align}
where $\kappa$ is a constant in $(0,1]$, $\Vert \cdot \Vert$ denotes the $\ell_2$-norm and $(\mathbf{D}_j)_{1\leq j \leq J} \in (\mathbb{R}^{S\times N})^J$ where $\mathbf{D} = [\mathbf{D}_1^\top,\ldots,\mathbf{D}_J^\top]^\top$ is \f{a linear operator. For instance, $\mathbf{D}$ may be a matrix computing the horizontal and vertical discrete difference between neighboring pixels so that $J = N$ and $S = 2$. A sparsity prior in
an analysis frame can also be modeled by setting $J=1$ and $\mathbf{D}$ equals to a frame operator with decomposition size $S \geq N$~\cite{chapitre_livre_wiley}. Other examples will be given in Section \ref{Sect:results}. Note that the constant $\gamma\in (0,+\infty)$ can be viewed as a regularization parameter that plays a prominent role in the restoration process and $\tau\in (0,+\infty)$ is a constant independent of $\gamma$. The form of the partition function for such a prior distribution, i.e. the normalizing factor $\tau \gamma^{N/2k}$, follows from the fact that the associated potential is $2\kappa$-homogeneous \cite{Pereyra_2015eusipco}.}

In this paper, the noise shape parameter $\kappa$ is chosen to be fixed through empirical methods and we aim at estimating parameter $\gamma$ together with $\x$. To this end, we choose a Gamma prior for $\gamma$, i.e. $\Prob(\gamma) \propto \gamma^{\alpha-1} \exp(-\beta \gamma)$ where $\alpha$ and $\beta$ are positive constants (set in practice to small values to ensure a weakly informative prior). 

Using the Bayes' rule, we can obtain the posterior distribution of the set of unknown variables $\mathbf{\Theta}=(\mathbf{x}, \gamma)$ given the vector of observations $\mathbf{y}$:
\begin{equation}
\Prob(\mathbf{\Theta} \mid \y)\propto \Prob(\y \mid\x) \Prob(\x \mid \gamma) \Prob(\gamma).
\end{equation}
However, this distribution has an intricate form. In particular, its normalization constant does not have a closed form expression. To cope with this problem, we resort to the variational Bayesian framework. The rationale of this work is to find a simple approximation to the true posterior distortion, leading to a tractable computation of the posterior mean estimate.
\subsection{Variational Bayes principle}
The variational Bayes approach has been first introduced in physics \cite{parisi1988statistical}. The idea behind it is to approximate the posterior distribution 
$\Prob(\mathbf{\Theta} \mid \y)$ with another
 distribution 
 denoted by $q(\mathbf{\Theta})$ which is as close as possible to $\Prob(\mathbf{\Theta}\mid \y)$, by minimizing the Kullback-Leibler divergence between them \cite{vsmidl2006variational,chapitre_livre_wiley,tutoriel_JSTSP}:
\begin{equation}
q^{\rm opt} = \argmin{q} \KL\big(q(\mathbf{\Theta}) \| \Prob(\mathbf{\Theta}\mid \y)\big),
\label{OptProb}
\end{equation}
where 
\begin{equation}
\KL\big(q(\mathbf{\Theta}) \| \Prob(\mathbf{\Theta}\mid\y)\big) = \int q(\mathbf{\Theta}) \ln \frac{q(\mathbf{\Theta})}{\Prob(\mathbf{\Theta}\mid \y)} \dd \mathbf{\Theta}.
\end{equation}
This minimization becomes tractable 
if 
 a suitable factorization
 structure 
 of $q(\mathbf{\Theta})$ is assumed. 
 In particular, we assume that \f{$q(\mathbf{\Theta}) = \prod_{r=1}^R q_r(\Theta_r)$}. Hence, the optimal density approximation \f{$q^{\rm opt}_r(\Theta_r)$} for each variable $\Theta_r$, is obtained by minimizing the $ \KL$ divergence while holding the remaining densities for the rest of variables fixed. In this case, there exists an optimal solution to the optimization problem \eqref{OptProb} for each density \f{$(q_r)_{1\leq r \leq R}$}, given by the exponential of the expectation of the joint density with respect to the distribution of
 all the unknown parameters except the one of interest 
 i.e. \f{(see \cite{vsmidl2006variational, choudrey2002variational} for details of calculus)}
\begin{align}
(\forall r \! \in \! \{1,\ldots,R\})\;\; \f{q^{\rm opt}_r}(\Theta_r) \!\propto \! 
\exp \left(\langle \ln \Prob(\y,\Theta)\rangle_{\prod_{i\neq r}
\f{q^{\rm opt}_i}(\Theta_{i})}\right) 
\label{SoluVBA}
\end{align}
where $\langle \,\cdot\, \rangle_{\prod_{i\neq r}\f{q_i}(\Theta_{i})}= \int\cdot \prod_{i\neq r}\f{q_i}(\Theta_{i}) \dd\Theta_{i}$.
Due to the implicit relations existing between $\left(\f{q^{\rm opt}_r}(\Theta_r)\right)_{1\leq r \leq R}$, an analytical expression of $q^{\rm opt}(\Theta)$
generally 
does not exist. Usually, these distributions are determined in an iterative way, by updating one of the separable components $\left(\f{q_r}(\Theta_r)\right)_{1\leq r \leq R}$ while fixing the others \cite{vsmidl2006variational}. Applications of classical VBA approaches can be found in \cite{dremeau2012boltzmann,babacan2011variational,chen2014variational,marnissi2016fast,dremeau_bis} while improved VBA algorithms have been proposed in \cite{fraysse2014measure,zhengTIP2014}. Once the approximate distributions are computed, the unknown parameters are then estimated by the means of the obtained distributions.
\section{Proposed approach}
\label{sec:Algo}
\begin{center}
\begin{table*}[htb]
\resizebox{\textwidth}{!}{
\renewcommand{\arraystretch}{2}
\begin{tabular}{| l || l | l | l | l | l |}
\hline 
Name & $\phi_i(v ; y_i)$ & $\phi_i'(v ;y_i)$& $\beta_i(y_i)$& Domain of validity &Noise model\\
\hline
\hline
Gaussian& $\dfrac{1}{2 \sigma^2}\left( v-y_i\right)^2$  &$ \dfrac{1}{ \sigma^2} \left(v-y_i\right)$& $ \dfrac{1}{ \sigma^2} $& $y_i \in \mathbb{R}$, \f{$\sigma>0$} &Gaussian\\
\hline
 Cauchy $\phantom{\dfrac{\sqrt{\frac{3}{8}}}{\sqrt{\frac{3}{8}}}}$ & $ \ln \left( 1+\dfrac{(v-y_i)^2}{\sigma^2}\right)$  &$\dfrac{2(v-y_i)}{ \sigma^2+(v-y_i)^2 }$& $ \dfrac{2}{\sigma^2} $&$y_i \in \mathbb{R}$, \f{$\sigma>0$}&  Cauchy\\
\hline
\shortstack{Anscombe  transform } & $2 \left( \sqrt{y_i+\frac{3}{8}}-\sqrt{v+{\frac{3}{8}}}\right)^2$&$2-\dfrac{2 \sqrt{y_i+\frac{3}{8}}}{\sqrt{v+{\frac{3}{8}}}}$& $\left(\dfrac{3}{8}\right)^{-3/2}\sqrt{y_i+\frac{3}{8}} $&$y_i \geqslant -\dfrac{3}{8}$ & Poisson\\
\hline
\shortstack{Generalized Anscombe \\ transform } & $2 \left( \sqrt{y_i+\sigma^2+\frac{3}{8}}-\sqrt{v+\sigma^2+\frac{3}{8}}\right)^2$& $2-\dfrac{2\ \sqrt{y_i+\frac{3}{8}+\sigma^2}}{\sqrt{v+\frac{3}{8}+\sigma^2}}$& $\left(\frac{3}{8}+\sigma^2\right)^{-3/2}\sqrt{y_i+\frac{3}{8}+\sigma^2} $&$y_i \geqslant -\dfrac{3}{8}-\sigma^2$ & Poisson-Gaussian\\
\hline
\shortstack{Shifted Poisson }&$(v+\sigma^2)-(y_i+\sigma^2)\ln (v+\sigma^2)$&  $1-\dfrac{y_i+\sigma^2}{v+\sigma^2}$& $\dfrac{y_i+\sigma^2}{\sigma^4}$&$y_i \geqslant -\sigma^2$, \f{$\sigma>0$} &Poisson-Gaussian\\
%
\hline
\shortstack{Weighted least squares }&$\dfrac{(y_i-v)^2}{2(\sigma^2+v)}+\dfrac{1}{2}\ln (\sigma^2+v)$& $\dfrac{1}{2}-\dfrac{(y_i+\sigma^2)^2}{2(v+\sigma^2)^2}+\dfrac{1}{2(\sigma^2+v)}$&$\max  \left\lbrace  \dfrac{(y_i+\sigma^2)^2}{\sigma^6} -\dfrac{1}{2 \sigma^4}, \dfrac{1}{54 (y_i+\sigma^2)^4}\right\rbrace  $&$y_i \in \mathbb{R} \backslash\lbrace{-\sigma^2}\rbrace $, \f{$\sigma>0$} &Poisson-Gaussian\\
\hline 
\end{tabular}
}
\caption{\small Examples of differentiable functions satisfying Assumption~\ref{assum1}. \f{ The Anscombe transform provides a differentiable
approximation of the exact Poisson data fidelity term, while the three last functions can be employed to approximate the exact mixed Poisson-Gaussian log-likelihood. Note that alternative expressions for the Anscombe-based approaches can be found in \cite{makitalo2011optimal,makitalo2013optimal}.} $\phi_i'$ denotes the first derivative of function $\phi_i$ and $\beta_i(y_i)$ is the Lipschitz constant of $\phi_i'$ (for functions in lines 3-6, we assume that $\phi_i$ is replaced on $\mathbb{R}_-$ by its quadratic extension \eqref{eq:quadext}.) The expression for the Lipschitz constant of the gradient of the weighted least squares likelihood was established in \cite[Chap. IV]{repetti2015algorithmes}.} \label{t:1}
\end{table*}
\end{center} 
In this work, we assume the following separable form for~$q$:
\begin{equation}
q(\mathbf{\Theta})=\f{q_{\X}(\x)q_{\G}(\gamma)}. 
\end{equation}
Unfortunately, by using directly (\ref{SoluVBA}), we cannot obtain an explicit expression of $\f{q_{\X}(\x)}$ due to the intricate form of both the prior distribution and the likelihood when the statistics of the noise are no longer Gaussian. In this paper, we propose to use deterministic methods to construct quadratic upper bounds for the negative logarithms of both the likelihood and the prior density \cite{seeger2012fast}. 
This allows us to derive an upper bound of the desired cost function in \eqref{OptProb} as will be described in the following.
\subsection{Construction of the majorizing approximation}
\subsubsection{Likelihood}
One popular approach in signal recovery is the half-quadratic formulation \cite{geman1995nonlinear}. Under some mild assumptions and by introducing some auxiliary variables, a complicated criterion can be written as the infimum of a surrogate half-quadratic function i.e. the latter is quadratic with respect to the original variables and the auxiliary variables appear decoupled. This half-quadratic criterion can be then efficiently minimized using classical optimization algorithms. 
This formulation has been widely used in energy-minimization approaches \cite{Idier01,Nikolova05,champagnat2004connection} where the initial optimization problem is replaced by the minimization of the constructed surrogate function. Furthermore, this technique has been recently extended to sampling algorithms \cite{marnissiSSP16}. The initial intractable posterior distribution to sample from is replaced by the conditional distribution of the target signal given the auxiliary variables. The obtained distribution has been shown to be much simpler to explore by using standard sampling algorithms. In this paper, we propose to use half-quadratic approaches to construct an upper bound for the objective function in \eqref{OptProb}.

We assume that the likelihood satisfies the following property:
\begin{myassum}
For every $i \in \{1, \ldots, M\}$, $\phi_i$ is differentiable on $\mathbb{R}$ and there exists  $\mu_i(y_i)>0$ such that the function defined by $v \mapsto \frac{v^2}{2}-\frac{\phi_i(v;y_i)}{\mu_i(y_i)}$ is convex on $\mathbb{R}$.
\label{assum1}
\end{myassum}  
In particular, this assumption is satisfied when,  \f{for every $i \in \{1, \ldots, M\}$, $\phi_i$ is $\beta_i(y_i)$-Lipschitz differentiable on $\mathbb{R}$, i.e., 
\begin{equation}
\left( \forall u \in \mathbb{R} \right)\left( \forall v \in \mathbb{R} \right) \quad  \vert \phi_i^{'}( v; y_i) -  \phi_i^{'}( u; y_i)\vert \leqslant \beta_i(y_i) \vert v-u\vert, 
\end{equation}
as soon as $\mu_i(y_i) \geqslant\beta_i(y_i)$.
}

Table~\ref{t:1} shows some examples of useful functions satisfying the desired property (up to an additive constant). Note that, since the functions in lines 3-6 of Table \ref{t:1} are $\beta_i(y_i)$-Lipschitz differentiable only on $\mathbb{R}_+$, we propose to use on $\mathbb{R}_-$ a quadratic extension of them defined as follows:
\f{
\begin{equation} 
(\forall v \in \mathbb{R}_-) \quad  \phi_i(v;y_i)=\phi_i(0;y_i)+ \phi_i^{'}(0;y_i) v + \frac{1}{2} \beta_i(y_i) v^2, 
\label{eq:quadext}
\end{equation}
}
so that the extended version of $\phi_i(.;y_i)$ is now differentiable on $\mathbb{R}$ with $\beta_i(y_i)$-Lipschitzian gradient.

For every $i \in \{1, \ldots, M\}$ and $v \in \mathbb{R}$, let us define the following function:
\begin{equation}
 \varsigma_i(v;y_i)= \underset{t \in \mathbb{R}}{\textnormal{sup}} \left( -\frac{1}{2} \left( v-t\right)^2 + \dfrac{\phi_i(t;y_i)}{\mu_i(y_i)}\right). \label{eq:varsigma}
\end{equation}
Then, the following property holds:
\begin{myprop}
For every $i \in \{1, \ldots, M\}$,
\begin{equation}
\left(\forall v \in \mathbb{R} \right) \quad \phi_i(v;y_i) = \underset{ {w}_i \in \mathbb{R}}{\inf} \quad {T}_i( v,{w}_i; y_i). \label{eq:GY}
\end{equation}
where, for every $v\in \mathbb{R}$,
\begin{align}
{T}_i(v,w_i; y_i)= \mu_i(y_i) \left(\frac{1}{2}(v - {w}_i )^2 + \varsigma_i({w}_i;y_i)\right).
\label{e:defTi}
\end{align}
Moreover, the unique minimizer of $w_i \mapsto {T}_i( v,{w}_i; y_i)$ reads  
\begin{equation}
\widehat{w}_i(v) = v - \frac{1}{\mu_i(y_i)} \phi_i'(v ; y_i).
\end{equation}
\label{prop_GY}
\end{myprop}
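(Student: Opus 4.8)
The plan is to recognize the right-hand side of \eqref{eq:GY} as a biconjugation (double Legendre--Fenchel) identity built from the convex function guaranteed by Assumption~\ref{assum1}. Concretely, fix $i$ and $y_i$ and set $g(t) = \frac{t^2}{2} - \frac{\phi_i(t;y_i)}{\mu_i(y_i)}$. By Assumption~\ref{assum1} this function is convex on $\mathbb{R}$; since $\phi_i(\cdot;y_i)$ is differentiable, $g$ is finite-valued and differentiable, hence proper, continuous and lower semicontinuous. Expanding the square in the definition \eqref{eq:varsigma} and collecting the term quadratic in $v$, I would first show that $\varsigma_i$ is, up to a concave quadratic, the conjugate of $g$:
\begin{equation}
\varsigma_i(v;y_i) = -\frac{v^2}{2} + \sup_{t\in\mathbb{R}}\big(vt - g(t)\big) = -\frac{v^2}{2} + g^*(v). \nonumber
\end{equation}

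Next I would substitute this into the definition \eqref{e:defTi} of $T_i$ and minimize over $w_i$. After the analogous cancellation of the quadratic terms in $w_i$, the inner minimization reduces to a second conjugation:
\begin{equation}
\frac{1}{\mu_i(y_i)}\inf_{w_i\in\mathbb{R}} T_i(v,w_i;y_i) = \inf_{w_i\in\mathbb{R}}\Big(\frac{v^2}{2} - v\,w_i + g^*(w_i)\Big) = \frac{v^2}{2} - g^{**}(v). \nonumber
\end{equation}
The core step is then the Fenchel--Moreau theorem: because $g$ is proper, lower semicontinuous and convex, one has $g^{**} = g$, whence $\frac{1}{\mu_i(y_i)}\inf_{w_i} T_i(v,w_i;y_i) = \frac{v^2}{2} - g(v) = \frac{\phi_i(v;y_i)}{\mu_i(y_i)}$, which after multiplying by $\mu_i(y_i)$ is exactly \eqref{eq:GY}.

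For the minimizer, I would invoke the Fenchel--Young equality condition characterizing attainment of the supremum defining $g^{**}(v)$: the optimal $w_i$ satisfies $v \in \partial g^*(w_i)$, equivalently $w_i \in \partial g(v)$. Since $g$ is differentiable, $\partial g(v) = \{g'(v)\}$ is a singleton, so any minimizer must coincide with $g'(v) = v - \frac{1}{\mu_i(y_i)}\phi_i'(v;y_i)$; this simultaneously yields the explicit formula for $\widehat{w}_i(v)$ and its uniqueness.

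I expect the main obstacle to be technical rather than conceptual: justifying the hypotheses of Fenchel--Moreau and the attainment of the extrema. One must confirm that $g$ is proper and lower semicontinuous (immediate from its finiteness and continuity) and that the two conjugations are legitimate; both follow from the convexity and differentiability of $g$ inherited from Assumption~\ref{assum1}. Care is also needed in tracking signs through the two successive cancellations of the $\frac{v^2}{2}$ and $\frac{w_i^2}{2}$ terms, since a sign slip there would destroy the biconjugation structure that makes the identity work.
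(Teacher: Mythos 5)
Your proposal is correct and follows essentially the same route as the paper's own proof: both define $g_i(v)=\frac{v^2}{2}-\frac{\phi_i(v;y_i)}{\mu_i(y_i)}$, identify $\varsigma_i(\cdot;y_i)=g_i^*-\frac{(\cdot)^2}{2}$, apply Fenchel--Moreau ($g_i^{**}=g_i$, valid since $g_i$ is convex, proper and lsc by Assumption~\ref{assum1}), and characterize the unique minimizer via the Fenchel--Young attainment condition $v\in\partial g_i^*(\widehat{w}_i)$ together with the differentiability of $g_i$. The only cosmetic difference is that the paper starts from $\widehat{w}_i(v)=g_i'(v)$ and verifies optimality by Fermat's rule, whereas you derive the same point from the attainment condition; the content is identical.
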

\begin{proof}
See Appendix \ref{appA}.
\end{proof}

It follows from this result that
\begin{equation}
\left(\forall \mathbf{x} \in \mathbb{R}^N \right) \quad \phi(\mathbf{x};\y) = \underset{ {\mathbf{w}} \in \mathbb{R}^M}{\inf} {T}( \mathbf{x}, {\mathbf{w}};\mathbf{y}), 
\end{equation}
where ${T}(\mathbf{x},{\mathbf{w}};\mathbf{y})=\sum\limits_{i=1}^M {T}_i(\left[\mathbf{H}\mathbf{x}\right]_i, {w}_i; y_i)$.

Note that \eqref{eq:GY} shows that, for every $i\in \{1,\ldots,M\}$,  $\phi_i(\cdot;y_i)$ is a so-called Moreau envelope of the function
$\mu_i(y_i)\varsigma_i(\cdot;y_i)$. A more direct proof of Proposition \ref{prop_GY} can thus be derived from the properties of the proximity operator 
\cite{article_de_synthese_plc_jcp} when the functions $(\phi_i)_{1\le i \le M}$ are convex. The proof we provide in the appendix however does not make such a restrictive assumption.

\subsubsection{Prior}
 Similarly, we construct a surrogate function for the prior distribution. More precisely, we follow the same idea as in \cite{chen2014variational} and we use the following convexity inequality to derive a majorant for the $\ell_{\kappa}$-norm \f{with $\kappa \in (0,1]$}:
$$\f{( \forall \nu > 0)(\forall \upsilon \geqslant 0)} \quad 
\upsilon^{\kappa} \le (1-\kappa)\nu^{\kappa} + \kappa \nu^{\kappa-1} \upsilon.
$$
Hence, we obtain the following majorant function for the negative logarithm of the prior distribution: 
\begin{align}
\gamma \sum\limits_{j=1}^J 
 \Vert \mathbf{D}_j\mathbf{x} \Vert^{2\kappa} &\leqslant \gamma \sum\limits_{j=1}^J
\frac{\kappa  \Vert \mathbf{D}_j \mathbf{x} \Vert^2 + (1-\kappa)\lambda_j}{\lambda_j^{1-\kappa}}.
%
\end{align}
where $(\lambda_j)_{1\leq j \leq J}$ are positive variables. In the following, we will denote by $Q(\mathbf{x},\boldsymbol{\lambda};\gamma)\!=\!\sum\limits_{j=1}^J Q_j(\mathbf{D}_j \mathbf{x}, \lambda_j;\gamma)$, the function in the right-hand side of the above inequality where, for every $j \!\in\! \left\lbrace 1,\ldots, J\right\rbrace$,
\begin{equation}
Q_j(\mathbf{D}_j\mathbf{x}, \lambda_j;\gamma)= \gamma\frac{\kappa  \Vert \mathbf{D}_j \mathbf{x} \Vert^2 + (1-\kappa)\lambda_j}{\lambda_j^{1-\kappa}}.
\label{majPrior}
\end{equation} 
\subsubsection{Proposed majorant}
Thus, we can derive the following lower bound for the posterior distribution:
\begin{equation}
\Prob(\mathbf{\Theta}\mid\mathbf{y}) \geqslant  L(\mathbf{\Theta}| \mathbf{y}; \w, \boldsymbol{\lambda}),
\label{minorantposte}
\end{equation}
where function $L$ is defined as
$$ L(\mathbf{\Theta}| \mathbf{y}; \w, \boldsymbol{\lambda})= C(\mathbf{y})  \exp \left[-T( \mathbf{x},\mathbf{w};\mathbf{y})-Q(\mathbf{x},\boldsymbol{\lambda};\gamma)\right] \Prob(\gamma)$$ with $C(\mathbf{y}) = \Prob(\mathbf{y})^{-1} (2\pi)^{-M/2}\tau \gamma^{\frac{N}{2\kappa} }  $. 
The minorization of the distribution leads to an upper bound for the $\KL$ divergence:
\begin{equation}\label{e:minKL}
\KL(q(\mathbf{\Theta}) \| \Prob(\mathbf{\Theta}\mid \mathbf{y})) \leq \KL (q(\mathbf{\Theta}) \| L(\mathbf{\Theta}|\mathbf{y}; \w, \boldsymbol{\lambda})).
\end{equation} 
Note that, although the constructed lower bound in \eqref{minorantposte} is tangent to the posterior distribution i.e. $$ \Prob(\mathbf{\Theta}\mid\mathbf{y}) =\underset{\w \in \mathbb{R}^M, \boldsymbol{\lambda} \in \mathbb{R}^J}{\textnormal{sup}} L(\mathbf{\Theta}| \mathbf{y}; \w, \boldsymbol{\lambda}),$$ the tangency property  may not be generally satisfied in \eqref{e:minKL}. Thus, the tightness of the constructed majorant of the $\KL$ divergence may have a significant impact on the accuracy of the method. By minimizing the constructed bound  \eqref{e:minKL} with respect to $\w$ and $\boldsymbol{\lambda}$, we make this bound as tight as possible. Note that,  for every $ i \in \{ 1, \ldots, M \}$ and $ j \in \{ 1, \ldots, J \}$, $\lambda_j \mapsto  \KL (q(\mathbf{\Theta}) \| L(\mathbf{\Theta}|\mathbf{y}; \w, \boldsymbol{\lambda}))$ and $w_i \mapsto \KL (q(\mathbf{\Theta}) \| L(\mathbf{\Theta}|\mathbf{y}; \w, \boldsymbol{\lambda}))$ can be minimized separately. Hence, Problem \eqref{OptProb} can be solved by the following four-step alternating optimization scheme:
\begin{itemize}
\item 
Minimizing 
the upper bound in \eqref{e:minKL}
w.r.t. $\f{q_{\X}(\x)}$;
\item Updating the auxiliary variables $w_i$ in order to minimize $\KL(q(\mathbf{\Theta})  \| L(\mathbf{\Theta}|\mathbf{y}; \w, \boldsymbol{\lambda}))$, for every $i \in \{ 1, \ldots, M \}$;
\item Updating the auxiliary variable $\lambda_j$ in order to minimize $\KL(q(\mathbf{\Theta}) \| L(\mathbf{\Theta}|\mathbf{y}; \w, \boldsymbol{\lambda}))$, for every $j \in \{ 1, \ldots, J \}$;
\item Mimimizing the upper bound in \eqref{e:minKL} 
w.r.t. $\f{q_{\G}(\gamma)}$.
\end{itemize}
The main benefit of this majorization strategy is to guarantee that the optimal approximate posterior distribution for $\mathbf{x}$ belongs to the Gaussian family and the optimal approximate posterior distribution for $\gamma$ belongs to the Gamma one, i.e. 
\begin{align*}
\f{q_{\X}(\mathbf{x})}&\equiv\mathcal{N}(\mathbf{m}, \mathbf{\Sigma}),\quad
\f{q_{\G}(\mathbf{\gamma})}  \equiv \mathcal{G}(a,b).
\end{align*}
Therefore, the distribution updates can be performed by updating their parameters, namely $\mathbf{m}$, $\mathbf{\Sigma}$, $a$, and $b$.

\section{Iterative algorithm}
Subsequently, at a given iteration $k$ of the proposed algorithm, the corresponding estimated variables will be indexed by ~$k$.
\subsection{Updating \f{$q_{\X}(\x)$}}
Because of the majorization step, we need to minimize the upper bound on the $\KL$ divergence. The standard solution (\ref{SoluVBA}) can still be used by replacing the joint distribution by a lower bound $ L(\mathbf{\Theta},\mathbf{y}; {\mathbf{w}}, \boldsymbol{\lambda})$
chosen proportional to $L(\mathbf{\Theta}|\mathbf{y}; {\mathbf{w}}, \boldsymbol{\lambda})$: 
\FINAL{
\begin{align}
\f{q_{\X}^{k+1}(\x)} \propto &  \exp \left( \left\langle\ln  L(\x,\gamma,\mathbf{y}; {\mathbf{w}}^k, \boldsymbol{\lambda}^k)\right\rangle_{\f{q_{\G}^k}(\gamma)}\right) \nonumber\\
\propto  & \exp \left( \int \ln  L(\x,\gamma,\mathbf{y}; {\mathbf{w}}^k, \boldsymbol{\lambda}^k)\f{q_{\G}^{k}}(\gamma)\dd \gamma \right) \nonumber\\
	     \propto  & \exp \Bigg( -\sum_{i=1}^M \frac{1}{2} \mu_i(y_i) \left( \left[\mathbf{H}\mathbf{x}\right]_i-{w}_i^k \right)^2 \nonumber \\
	      & \qquad\quad- \dfrac{a_k}{b_k} \sum_{j=1}^J \frac{\kappa  \Vert \mathbf{D}_j \mathbf{x} \Vert^2 + (1-\kappa)\lambda_j^k}{(\lambda_j^k)^{1-\kappa}}
	       \Bigg). 
\end{align}
}
\DRAFT{
\begin{align}
\f{q_{\X}^{k+1}(\x)} \propto &  \exp \left( \left\langle\ln  L(\x,\gamma,\mathbf{y}; {\mathbf{w}}^k, \boldsymbol{\lambda}^k)\right\rangle_{\f{q_{\G}^k}(\gamma)}\right) \nonumber\\
\propto  & \exp \left( \int \ln  L(\x,\gamma,\mathbf{y}; {\mathbf{w}}^k, \boldsymbol{\lambda}^k)\f{q_{\G}^{k}}(\gamma)\dd \gamma \right) \nonumber\\
	     \propto  & \exp \Bigg( -\sum_{i=1}^M \frac{1}{2} \mu_i(y_i) \left( \left[\mathbf{H}\mathbf{x}\right]_i-{w}_i^k \right)^2 
 - \dfrac{a_k}{b_k} \sum_{j=1}^J \frac{\kappa  \Vert \mathbf{D}_j \mathbf{x} \Vert^2 + (1-\kappa)\lambda_j^k}{(\lambda_j^k)^{1-\kappa}}
	       \Bigg). 
\end{align}
}


The above distribution  can be identified as a multivariate Gaussian distribution whose covariance matrix and mean parameter are given by
\begin{align}
\bSigma_{k+1}^{-1} &= \H^\top \Diag(\boldsymbol{\mu}(\y)) \H + 2\dfrac{a_k}{b_k}  \D^\top \bLambda^k \D,\label{CovX}\\
\m_{k+1} & = \bSigma_{k+1} \H^\top \u,  \label{MeanX}
\end{align} 
where $\boldsymbol{\mu}(\y)=\left[\mu_1(y_1), \ldots, \mu_M(y_M)\right]^\top$, $\u$ is a $M \times 1$ vector whose $i$-th component is given by $u_i = \mu_i(y_i) {w}_i^k $ and $\bLambda$ is the diagonal matrix whose diagonal elements are $\left( \kappa (\lambda_j^k)^{\kappa-1}\mathbf{I}_{S}\right)_{1 \le j \le J}$. 


\subsection{Updating ${\mathbf{w}}$}
The auxiliary variable ${\mathbf{w}}$ is determined by minimizing the upper bound of $\KL$ divergence 
with respect to this variable: 
\begin{align}
{\mathbf{w}}^{k+1} = &  \,\argmin{{\mathbf{w}}} \int \f{q_{\X}^{k+1}}(\mathbf{\x})\f{q^{k}_{\G}}(\mathbf{\gamma}) \ln \frac{\f{q_{\X}^{k+1}}(\x)\f{q^{k}_{\G}}(\gamma)}{{L}(\mathbf{\Theta}|\mathbf{y}; \w, \boldsymbol{\lambda}^k)} \dd \x \dd \gamma \nonumber \\
	      = &  \,\argmin{{\mathbf{w}}} \int \f{q_{\X}^{k+1}}(\mathbf{\x})\f{q^{k}_{\G}}(\mathbf{\gamma})\left( -\ln {L}(\mathbf{\Theta}|\mathbf{y}; {\mathbf{w}}, \boldsymbol{\lambda}^k) \right) \dd \mathbf{\x} \dd \gamma\nonumber \\
	      = &  \,\argmin{{\mathbf{w}}} \int \f{q_{\X}^{k+1}}(\x) \sum_{i=1}^M {T}_i(\left[\mathbf{H}\mathbf{x}\right]_i, {w}_i;y_i) \dd \x \label{eq:22} \\
	      = &  \,\argmin{{\mathbf{w}}}  \sum_{i=1}^M {T}_i( \left[\mathbf{H}\m_{k+1}\right]_i, {w}_i;y_i),
\end{align}
where the equality in \eqref{eq:22} follows from the expression in \eqref{e:defTi}. 
Interestingly, it follows from Property \ref{prop_GY} that
\begin{align}
{w}_i^{k+1} &= \argmin{w_i}  {T}_i( \left[\mathbf{H}\m_{k+1}\right]_i, w_i;y_i) \nonumber \\
&=\left[\mathbf{H}\m_{k+1}\right]_i - \frac{1}{\mu_i(y_i)} \phi_i^{'}(\left[\mathbf{H}\m_{k+1}\right]_i;y_i).
\label{eqw}
\end{align}

\subsection{Updating $\boldsymbol{\lambda}$}
The variable $\boldsymbol{\lambda}$ is determined in a similar way: for every $j\in \{1,\ldots,J\}$,
\FINAL{
\begin{align}
\lambda_j^{k+1} &=  \argmin{\lambda_j \in [0,+\infty)} \KL(\f{q^{k+1}_{\X}}\big(\mathbf{\x})\f{q^{k}_{\G}}(\mathbf{\gamma})\| L(\mathbf{\Theta}|\mathbf{y}; \w^{k+1}, \boldsymbol{\lambda})\big) \nonumber \\
&= \argmin{\lambda_j \in [0,+\infty)} \sum_{i=1}^Q \int \f{q^{k+1}_{\X}}(\mathbf{\x})\f{q^{k}_{\G}}(\mathbf{\gamma})  Q_i(\mathbf{D}_i \mathbf{x}, \lambda_i; \gamma) \dd \x \dd \gamma \nonumber \\
&= \argmin{\lambda_j \in [0,+\infty)} \int \f{q^{k+1}_{\X}}(\mathbf{\x})\f{q^{k}_{\G}}(\mathbf{\gamma}) Q_j(\mathbf{D}_j \mathbf{x}, \lambda_j; \gamma) \dd \x \dd \gamma  \nonumber \\
&=\argmin{\lambda_j \in [0,+\infty)} \int \f{q^{k+1}_{\X}}(\mathbf{\x})\f{q^{k}_{\G}}(\mathbf{\gamma}) \nonumber \\ & \hspace{2cm }\times \gamma \ \frac{\kappa  \Vert \mathbf{D}_j \mathbf{x} \Vert^2 + (1-\kappa)\lambda_j}{\lambda_j^{1-\kappa}} \dd \x \dd \gamma \nonumber  \\
&=\argmin{\lambda_j \in [0,+\infty)} \frac{\kappa\, \EE_{\f{q^{k+1}_{\X}}(\x)}\left[\Vert\mathbf{D}_j\x \Vert^2 \right] + (1-\kappa)\lambda_j}{\lambda_j^{1-\kappa}} .
\end{align}
}
\DRAFT{
\begin{align}
\lambda_j^{k+1} &=  \argmin{\lambda_j \in [0,+\infty)} \KL(\f{q^{k+1}_{\X}}\big(\mathbf{\x})\f{q^{k}_{\G}}(\mathbf{\gamma})\| L(\mathbf{\Theta}|\mathbf{y}; \w^{k+1}, \boldsymbol{\lambda})\big) \nonumber \\
&= \argmin{\lambda_j \in [0,+\infty)} \sum_{i=1}^Q \int \f{q^{k+1}_{\X}}(\mathbf{\x})\f{q^{k}_{\G}}(\mathbf{\gamma})  Q_i(\mathbf{D}_i \mathbf{x}, \lambda_i; \gamma) \dd \x \dd \gamma \nonumber \\
&= \argmin{\lambda_j \in [0,+\infty)} \int \f{q^{k+1}_{\X}}(\mathbf{\x})\f{q^{k}_{\G}}(\mathbf{\gamma}) Q_j(\mathbf{D}_j \mathbf{x}, \lambda_j; \gamma) \dd \x \dd \gamma  \nonumber \\
&=\argmin{\lambda_j \in [0,+\infty)} \int \f{q^{k+1}_{\X}}(\mathbf{\x})\f{q^{k}_{\G}}(\mathbf{\gamma}) 
\gamma \ \frac{\kappa  \Vert \mathbf{D}_j \mathbf{x} \Vert^2 + (1-\kappa)\lambda_j}{\lambda_j^{1-\kappa}} \dd \x \dd \gamma \nonumber  \\
&=\argmin{\lambda_j \in [0,+\infty)} \frac{\kappa\, \EE_{\f{q^{k+1}_{\X}}(\x)}\left[\Vert\mathbf{D}_j\x \Vert^2 \right] + (1-\kappa)\lambda_j}{\lambda_j^{1-\kappa}} .
\end{align}
}

The minimum is attained at 
\begin{align}
\lambda_j^{k+1}  = &\,\EE_{\f{q^{k+1}_{\X}}(\x)}\left[\Vert\mathbf{D}_j\x \Vert^2 \right]\nonumber\\
     = &\,\Vert \mathbf{D}_j\m_{k+1}\Vert^2+ \text{trace}\left[\mathbf{D}_j^\top\mathbf{D}_j  \bm{\Sigma}_{k+1}\right].
\label{eqlambdai}
\end{align}

\subsection{Updating \f{$q_{\G}(\gamma)$}}
Using (\ref{SoluVBA}) where the joint distribution is replaced by its lower bound function, we obtain
\FINAL{
\begin{align}
	  \f{q_{\G}(\gamma)}	   \propto &  \exp \left( \left\langle\ln  L(\mathbf{x},\gamma,\mathbf{y}; \mathbf{w}^{k+1}, \boldsymbol{\lambda}^{k+1})\right\rangle_{\f{q^{k+1}_{\X}}(\mathbf{x})}\right) \nonumber\\
\propto  & \exp \left( \int \ln  L(\mathbf{x},\gamma,\mathbf{y}; \mathbf{w}^{k+1}, \boldsymbol{\lambda}^{k+1})\f{q^{k+1}_{\X}}(\mathbf{x})d \mathbf{x} \right) \nonumber\\	    
    \propto  & \gamma^{\frac{N}{2\kappa} +\alpha-1} \exp (-\beta \gamma) \nonumber \\ \times \exp &\Bigg(  -  \gamma \sum_{j=1}^J 
	       \frac{\kappa \EE_{\f{q^{k+1}_{\X}}(\x)}\left[ \Vert \mathbf{D}_j \mathbf{x} \Vert^2\right] + (1-\kappa)\lambda_j^{k+1}}{(\lambda_j^{k+1})^{1-\kappa}} \Bigg)\nonumber  \\
\equiv&\mathcal{G}(a_{k+1},b_{k+1}).
\end{align}
}
\DRAFT{
\begin{align}
	  \f{q_{\G}(\gamma)}	   \propto &  \exp \left( \left\langle\ln  L(\mathbf{x},\gamma,\mathbf{y}; \mathbf{w}^{k+1}, \boldsymbol{\lambda}^{k+1})\right\rangle_{\f{q^{k+1}_{\X}}(\mathbf{x})}\right) \nonumber\\
\propto  & \exp \left( \int \ln  L(\mathbf{x},\gamma,\mathbf{y}; \mathbf{w}^{k+1}, \boldsymbol{\lambda}^{k+1})\f{q^{k+1}_{\X}}(\mathbf{x})d \mathbf{x} \right) \nonumber\\	    
    \propto  & \gamma^{\frac{N}{2\kappa} +\alpha-1} \exp (-\beta \gamma)
		\exp \Bigg(  -  \gamma \sum_{j=1}^J 
	       \frac{\kappa \EE_{\f{q^{k+1}_{\X}}(\x)}\left[ \Vert \mathbf{D}_j \mathbf{x} \Vert^2\right] + (1-\kappa)\lambda_j^{k+1}}{(\lambda_j^{k+1})^{1-\kappa}} \Bigg)\nonumber  \\
\equiv&\mathcal{G}(a_{k+1},b_{k+1}).
\end{align}
}

	       Using \eqref{eqlambdai}, 
one can recognize that the above distribution is a Gamma one with parameters
\begin{align}
a_{k+1}&= \frac{N}{2\kappa}+\alpha = a 
, \qquad
b_{k+1}=\sum_{j=1}^J (\lambda_j^{k+1})^\kappa + \beta \label{e:bk+1}.
\end{align}
\subsection{Resulting algorithm}
The proposed method is outlined in Algorithm \ref{Algo2}. It alternates between the update of the auxiliary variables and the distribution of the unknown parameters. 
\begin{algorithm}[h]
\caption{VBA approach for recovery of signals corrupted with non-Gaussian noise.}
\label{Algo1}
\begin{enumerate}
\compresslist
 \item Set initial values: $\w^0, \boldsymbol{\lambda}^0, b_0$. Compute $a$ with \eqref{e:bk+1}.
 \item For $k=0,1,\ldots$
 \begin{enumerate}
 \compresslist
  \item Update parameters $\bSigma_{k+1}$ and $\m_{k+1}$ of $\f{q^{k+1}_{\X}}(\x)$ using 
  \eqref{CovX} and \eqref{MeanX}.
  \item Update $\w^{k+1}$ using \eqref{eqw}.
  \item Update $\boldsymbol{\lambda}^{k+1}$ using \eqref{eqlambdai}.
  \item Update parameter $b_{k+1}$ of $\f{q^{k+1}_{\G}}(\gamma)$ using \eqref{e:bk+1}.
  \end{enumerate}
\end{enumerate}
\end{algorithm}
\subsection{Implementation issues}
\label{SecImplIss}
An additional difficulty arising in the implementation of Algorithm \ref{Algo2} is that the determination of $\bSigma_{k+1}$ requires inverting the matrix given by (\ref{CovX}), which is computationally expensive in high dimension. To bypass this operation, we propose to compare two approaches. 
The first one follows the idea in~\cite{babacan2011variational}: we make use of the linear conjugate gradient method to approximate $\m_{k+1}$ iteratively and in (\ref{eqlambdai}), where an explicit form of $\bSigma_{k+1}$ cannot be sidestepped, this matrix is approximated by a diagonal one whose diagonal entries are equal to the inverse of the diagonal elements of $\bSigma_{k+1}^{-1}$. 
The second technique uses Monte-Carlo sample averaging to approximate $\m_{k+1}$ and $\lambda_j^{k+1}$: specifically, we generate samples $(\mathbf{n}_s)_{1 \le s \le N_s}$ from Gaussian distribution with mean $\m_{k+1}$ and covariance matrix $\bSigma_{k+1}$ using \cite{high_Gauss_sample3},  as summarized in Algorithm \ref{Algo2}. This estimator has two desirable properties. First, its accuracy is independent of the problem size, its relative error only depends on the number of samples and it decays as $\sqrt{2 / N_s}$  (only $N_s = 2/\rho^2$ samples are required to reach a desired relative error $\rho$) \cite{PapandreouNIPS}. Second, 
for the simulation of $N_s$ independent Gaussian samples, one can take advantage of a multiprocessor architecture by resorting to parallel implementation allowing us to reduce the computation time. 
\begin{algorithm}[h]
\caption{Stochastic approach for computing the parameters of $q(\x)$.}
\label{Algo2}
\begin{enumerate}
\compresslist
 \item For $s=1,2,\ldots,N_s$
 \begin{enumerate}
 \compresslist
 \item \textbf{Perturbation} : Generate 
 \begin{align*}
   \boldsymbol{\nu}_s &\sim \mathcal{N}\left(\mathbf{u},  \left(\text{Diag}\big(\boldsymbol{\mu}(\y)\big)\right)^{1/2}\right)\\
   \boldsymbol{\eta}_s &\sim \mathcal{N}\left(0,\sqrt{2 \gamma^k} \mathbf{\Lambda}_k^{1/2}\right)
   \end{align*}
 with $\gamma^k = a_k/b_k$.
  \item \textbf{Optimization}: Compute $\mathbf{n}_s$ as the minimizer of 
  $\mathcal{J}(\mathbf{v})= \Vert  \boldsymbol{\nu}_s-  \text{Diag}\big(\boldsymbol{\mu}(\y)\big) \mathbf{H}\mathbf{v}\Vert^2_{\left(\text{Diag}(\boldsymbol{\mu}(\y))\right)^{-1}}+ \frac{1}{2 \gamma^k }\Vert \boldsymbol{\eta}_s - 2 \gamma^k \mathbf{\Lambda}_k \mathbf{D}\mathbf{v}\Vert^2_{ \mathbf{\Lambda}_k^{-1}}$, 
 which is equivalent to minimize $\tilde{\mathcal{J}}(\mathbf{v})=\mathbf{v}^\top \mathbf{\Sigma}_{k+1}^{-1} \mathbf{v}- 2 \mathbf{v}^\top \mathbf{z}_s   $
where $\mathbf{z}_s=\mathbf{H}^\top \boldsymbol{\nu}_s+   \mathbf{D}^\top \boldsymbol{\eta}_s.$
The minimizer is computed using the conjugate gradient algorithm.  
  \end{enumerate}
  \item  Update 
  \begin{align*}
  &\m_{k+1}= \frac{1}{N_s} \sum_{s=1}^{N_s}\mathbf{n}_s\\
 & \left( \forall j \in \lbrace 1, \ldots, J \rbrace \right) \quad \lambda_j^{k+1}= \frac{1}{N_s} \sum_{s=1}^{N_s} \Vert\mathbf{D}_j \mathbf{n}_s \Vert^2.
  \end{align*}
\end{enumerate}
\end{algorithm}

\section{Application to Poisson-Gaussian image restoration} 
\label{Sect:results}
\f{Let us now illustrate the usefulness of our algorithm via experiments in the context of image restoration when the noise follows a mixed PG model.
Recently, there has been a growing interest  for the PG noise model as it arises in many real imaging systems in astronomy \cite{clar:Benvenuto_2008, clar:Snyder_1993}, medicine \cite{nichols2002spatiotemporal}, photography \cite{Julliand2015}, and biology \cite{delpretti2008multiframe}. Numerous efficient restoration methods exist in the limit case when one neglects either the Poisson or the Gaussian component. However, such approximation may be rough, and lead to poor restoration results, especially in the context of low count imaging and/or high level electronic noise. On the opposite, restoration methods that specifically address mixed PG noise remain scarce, especially when the observation operator $\mathbf{H}$ differs from identity. The aim of this section is to show the applicability of the proposed VBA method in this context. 
}
\subsection{Problem formulation}
The vector of observations $\mathbf{y} \!=\! (y_i)_{1\le i \le M} \in \mathbb{R}^M$ is related to the original image $\overline{\mathbf{x}}$ through
\begin{align}
\mathbf{y}  = \mathbf{z}+\mathbf{b},
\end{align}
where $\mathbf{z}$ and $\mathbf{b}$
are assumed to be mutually independent random vectors and 
\begin{align*}
&\mathbf{z} \mid \overline{\mathbf{x}} \sim \mathcal{P}( \mathbf{H}\overline{\mathbf{x}}), \quad
\mathbf{b} \sim   \mathcal{N}(\mathbf{0}, \sigma^2 \mathbf{I}_M),
\end{align*}
$\mathcal{P}$ denoting \f{the independent Poisson distribution, and $\sigma>0$.  
The associated likelihood function reads} \cite{clar:Chouzenoux_2015}:
\begin{equation}\label{e:likelihood2}
\Prob(\mathbf{y}\mid\mathbf{x})= \prod\limits_{i=1}^M \left(\sum\limits_{n=1}^{+\infty}\dfrac{e^{-\left[\mathbf{H}\mathbf{x}\right]_i} \left( \left[\mathbf{H}\mathbf{x}\right]_i\right)^n}{n!}\dfrac{e^{-\frac{1}{2\sigma^2}(y_i-n)^2}}{\sqrt{2\pi \sigma^2}}\right).
\end{equation}
The expression of the PG likelihood \eqref{e:likelihood2} involves an infinite sum which makes its exact computation impossible in practice. In \cite{clar:Chouzenoux_2015}, the infinite sum was replaced by a finite summation with bounds depending on the current estimate of $\bar{\mathbf{x}}$. However, this strategy implies a higher computational burden in the reconstruction process when compared with other likelihoods proposed in the literature as accurate approximations of \eqref{e:likelihood2}.
In \cite{marnissi2016fast}, VBA inference techniques have been successfully applied to the restoration of data corrupted with PG noise using the generalized Anscombe transform (GAST) likelihood \cite{clar:Murtagh_1995,clar:Starck_1998,foi2013,Makitalo_M_2012_picassp_Poisson_Fdueuigat}. 
\f{Following these promising preliminary results, we will consider here the GAST approximation, as well as the shifted Poisson (SPoiss) \cite{Chakrabarti} and the weighted least squares (WL2) \cite{clar:Benvenuto_2008, clar:Li_2014, clar:Repetti_2012} approximations, defined respectively in lines 4, 5 and 6 of Table \ref{t:1}. In order to satisfy Assumption~\ref{assum1}, we will use $\mu_i(y_i) \equiv \max\left \lbrace\beta_i(y_i), \varepsilon\right\rbrace$ where $\varepsilon>0$ for the GAST and the SPoiss approximations. For the WL2 approximation, we set $\mu_i(y_i)=\max \left\lbrace (y_i+\sigma^2)^2/\sigma^6, \varepsilon \right\rbrace$. Note that in all our experiments, a data truncation is performed as a pre-processing step on the observed image $\mathbf{y}$ in order to satisfy the domain condition given in the fifth column of Table~\ref{t:1}.}

%
\subsection{Numerical results}
\begin{figure}[t]
\centering
\begin{tabular}{@{}c@{}c@{}}
\subfloat[][Image $\overline{\mathbf{x}}_1$ \\($256 \times 256$)]{\includegraphics[scale=0.45]{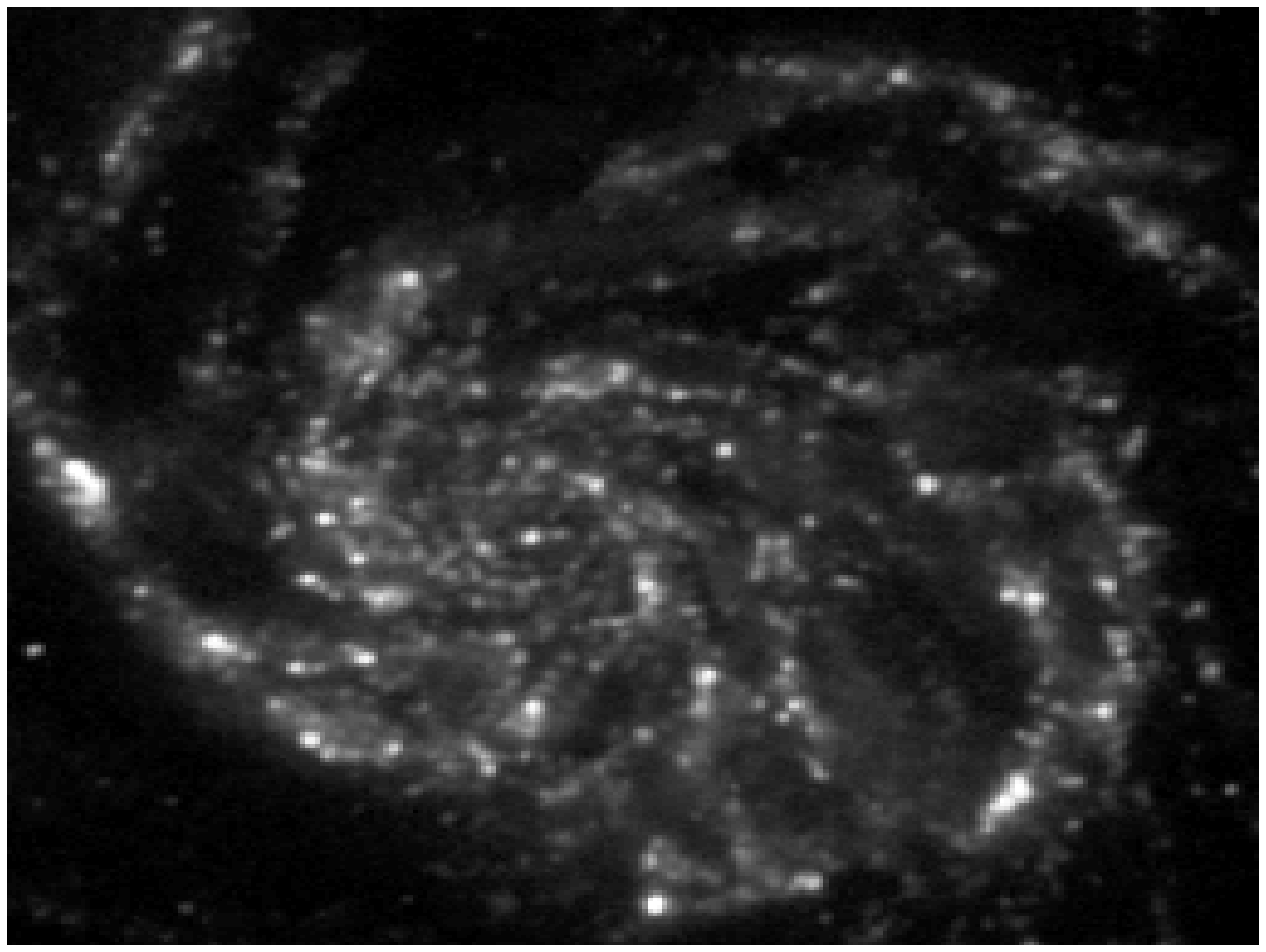} }&
\subfloat[][Image $\overline{\mathbf{x}}_2$ \\($190 \times 190$)]{\includegraphics[scale=0.45]{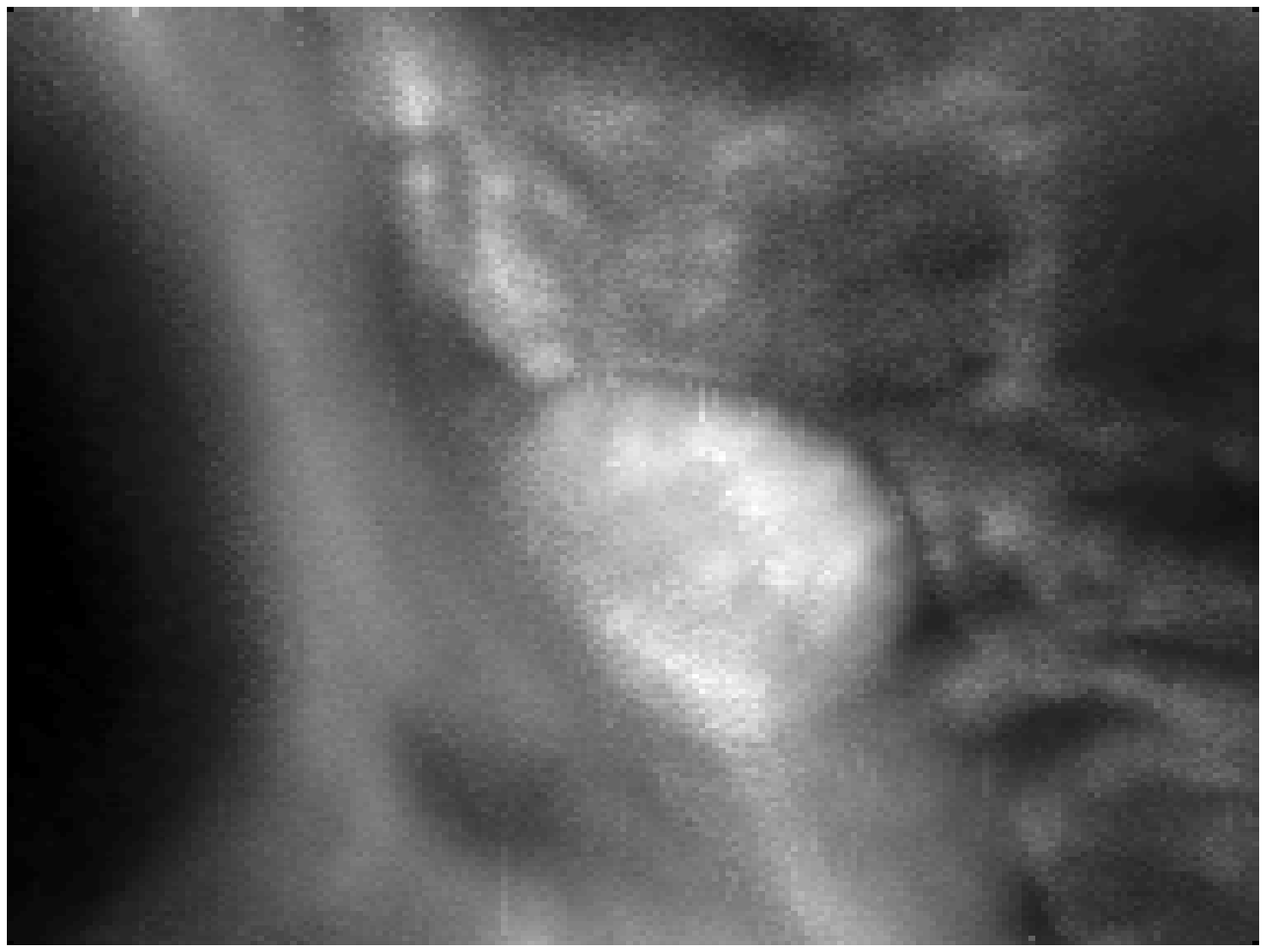} }\\
\subfloat[][Image $\overline{\mathbf{x}}_3$ \\($ 257\times 256 $)]{\includegraphics[scale=0.45]{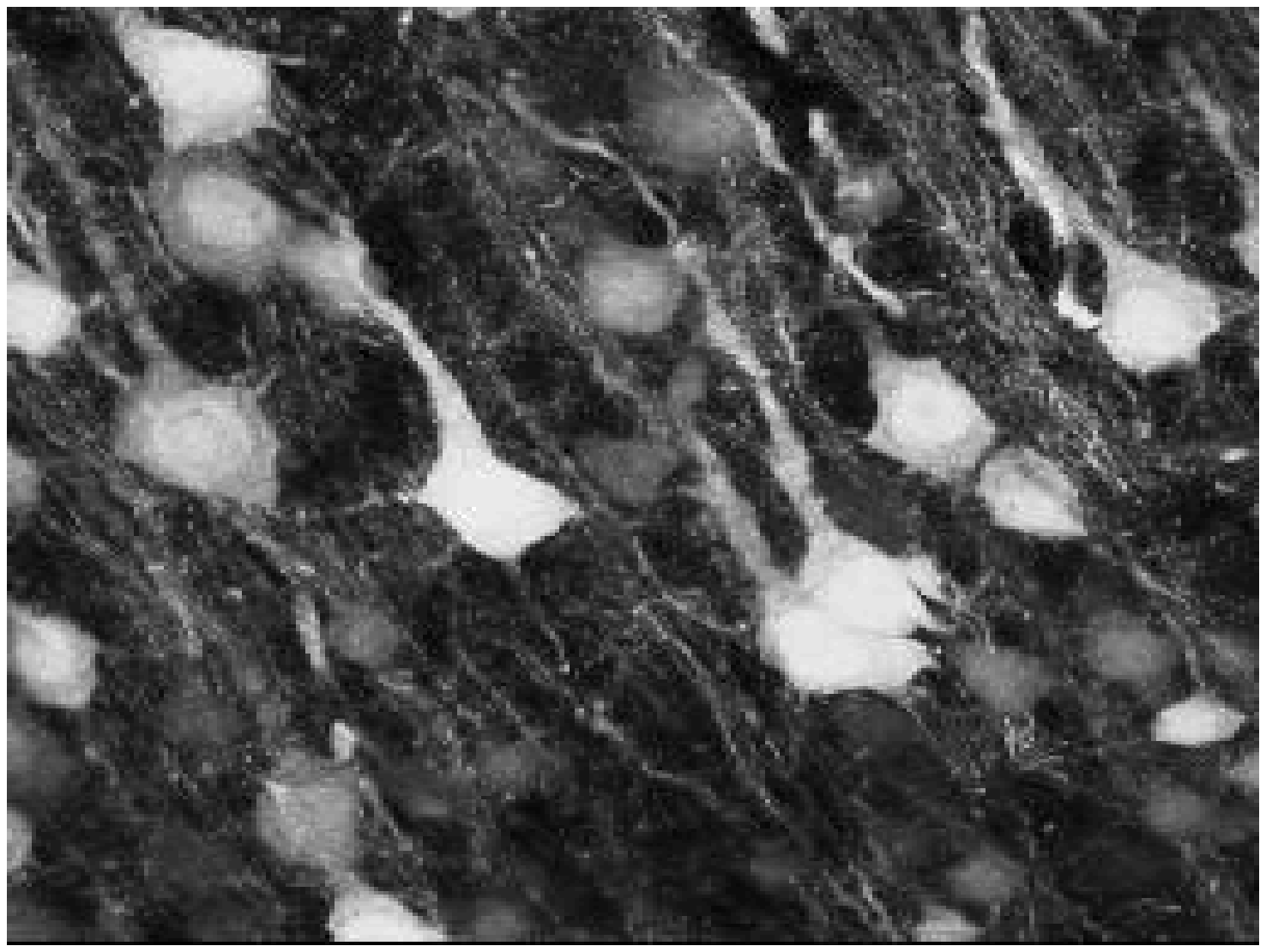} }
&
\subfloat[][Image $\overline{\mathbf{x}}_4$ \\($350 \times 350$)]{\includegraphics[scale=0.45]{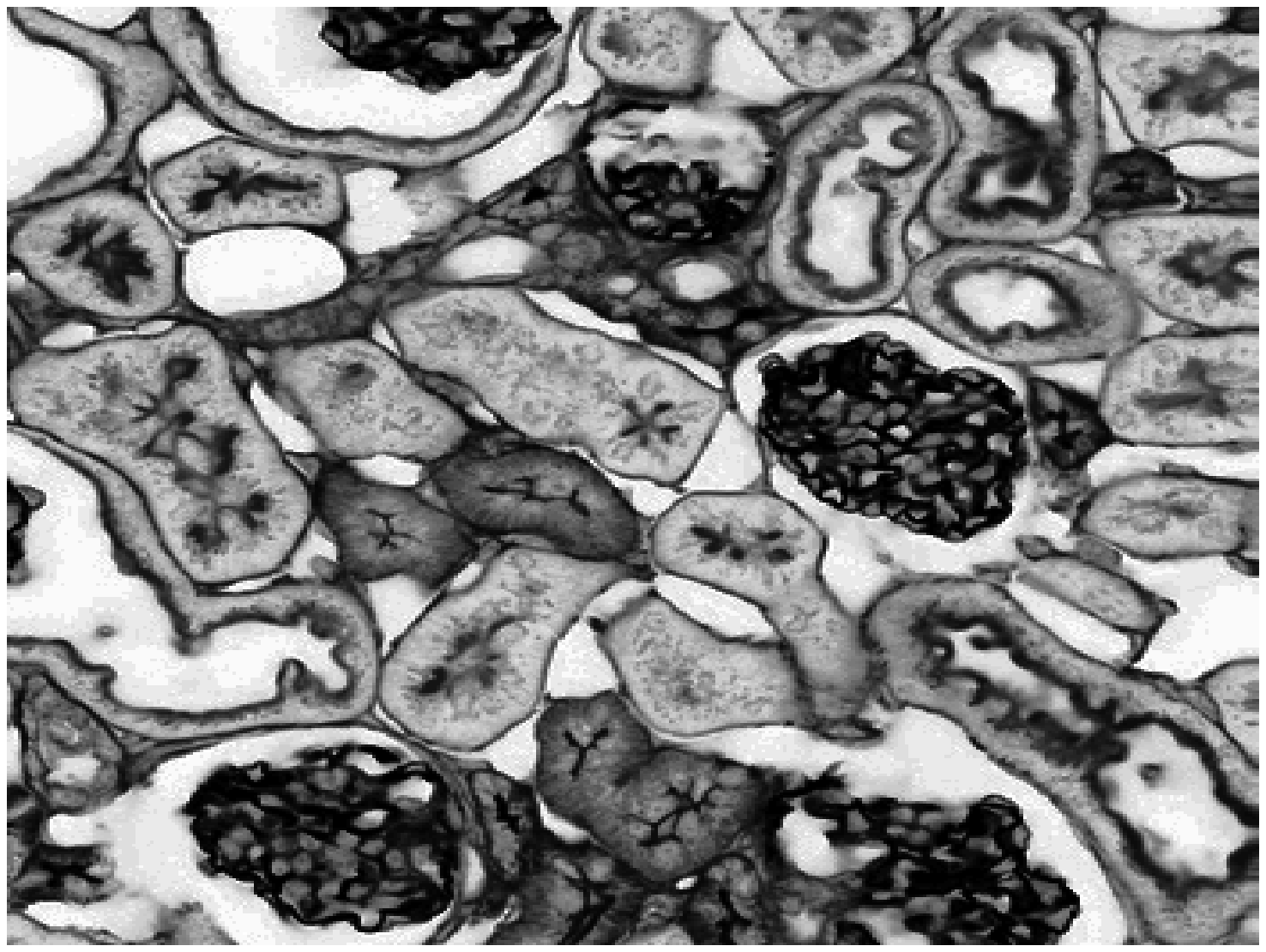} }\\
\subfloat[][Image $\overline{\mathbf{x}}_5$ \\($128 \times 128$)]{\includegraphics[scale=0.45]{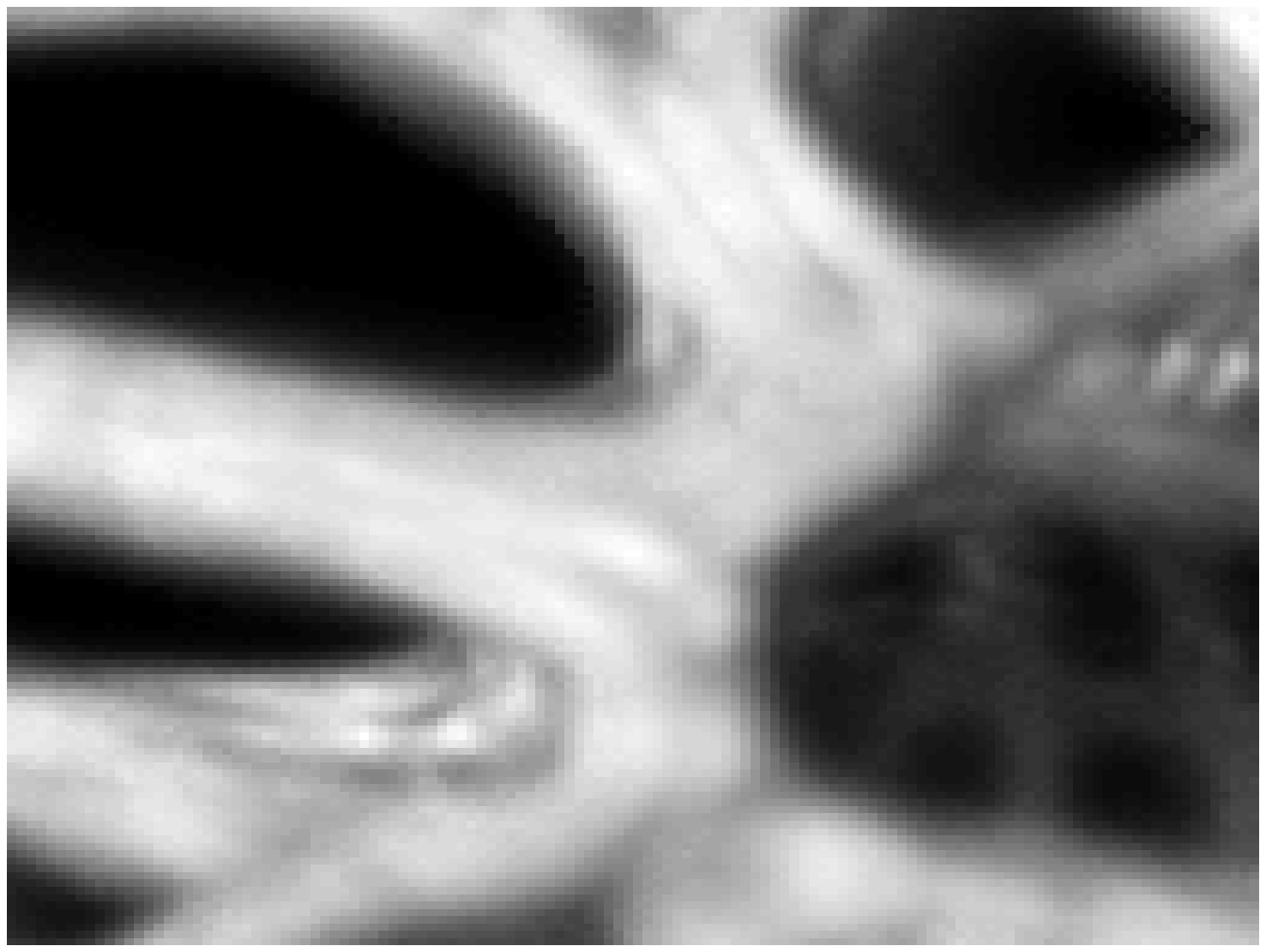} } &
\subfloat[][Image $\overline{\mathbf{x}}_6$ \\($256 \times 256$)]{\includegraphics[scale=0.45]{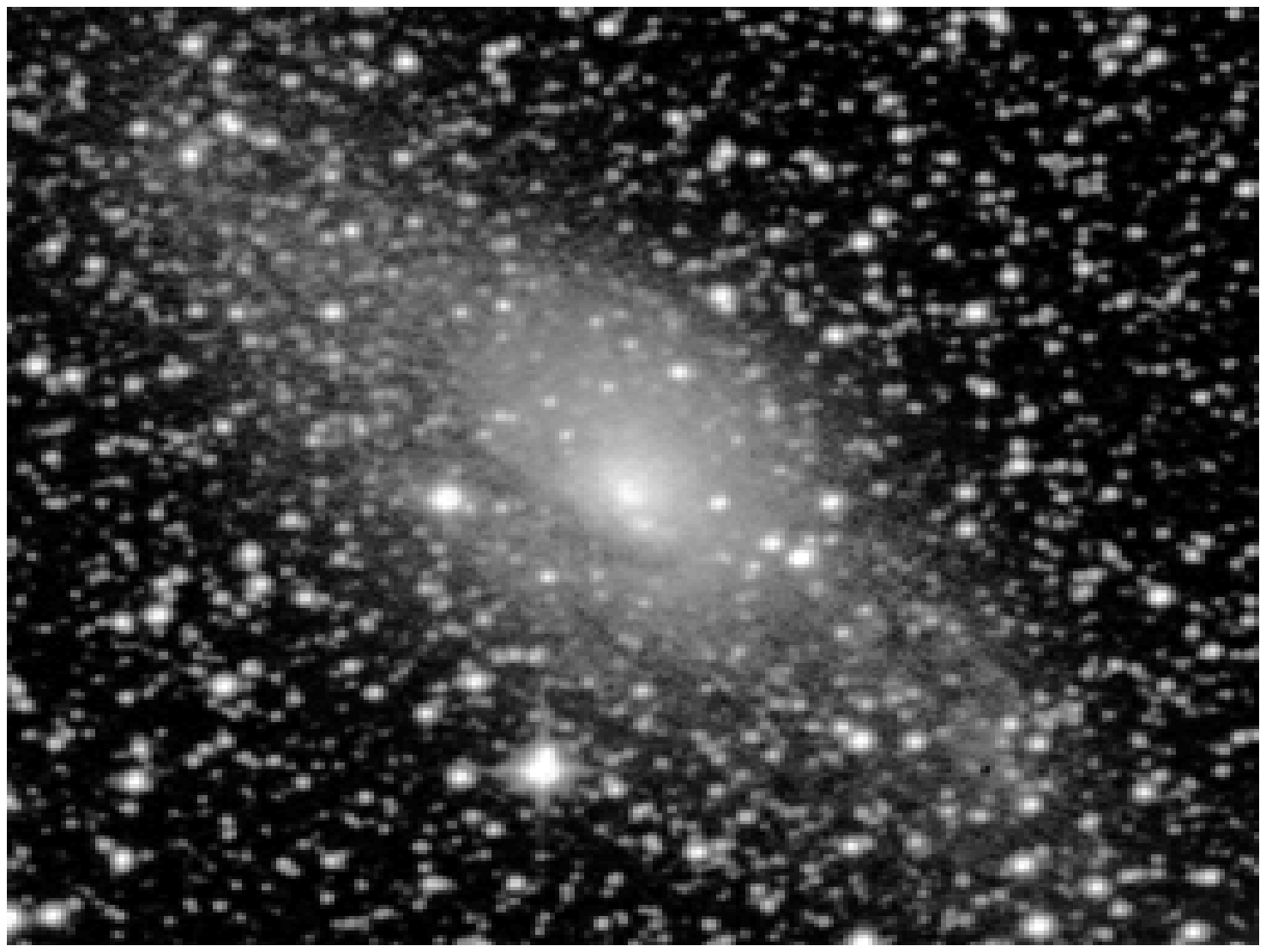} }
\end{tabular}
\caption{Original images.}
\label{fig:im}
\end{figure}
\begin{table}[t]
\tiny
\centering
\resizebox{0.9\textwidth}{!}{
\renewcommand{\arraystretch}{1.3}
\begin{tabular}{|l|l|l||l|l|l|}
\cline{4-6}
 \multicolumn{3}{c|}{ }&\shortstack{GAST }& SPoiss& WL2\\
\hhline{---|=|=|=|}
\multirow{9}{*}{\shortstack{ VBA }}&
\multirow{3}{*}{\shortstack{ Approx. 1 }}
&SNR & 8.13  & 9.36  & 9.90\\
\cline{3-6}
&&SSIM& 0.3987 & 0.4790 & 0.5140\\
\cline{3-6}
&&Time (s.)& 55 & 62 & 67 \\
\hhline{|~|-|----|}
& \multirow{3}{*}{\shortstack{ Approx. 2 \\
$N_s=160$ }}
&SNR & 9.57  & 10.17 &10.22  \\
\cline{3-6}
& &SSIM& 0.5260 & 0.6017&0.6058\\
\cline{3-6}
& &Time (s.)&688&601 &1011 \\
\hhline{|~|-|----|}
 & \multirow{3}{*}{\shortstack{ Approx. 2 \\
$N_s=640$ }}
&SNR & \textbf{9.61}&   \textbf{ 10.20} & \textbf{10.27}  \\
\cline{3-6}
&&SSIM& \textbf{0.5308} & \textbf{0.6088}& \textbf{0.6112}\\
\cline{3-6}
&&Time (s.)&\textbf{3606}&\textbf{3507}& \textbf{3510} \\
\hline
\hline
\multirow{6}{*}{\shortstack{ MAP }}&
\multirow{3}{*}{\shortstack{ Discrepancy \\ principle}}
&SNR & -1.13& 5.24 & 10.17 \\
\cline{3-6}
&&SSIM&0.0980 &   0.2961 & 0.6131 \\
\cline{3-6}
&&Time (s.)& 3326&  2215   & 3053 \\
\hhline{|~|-|----|}
&\multirow{3}{*}{\shortstack{ Best \\ parameter}}
&SNR & 9.46& 10.40 & 10.39 \\
\cline{3-6}
&&SSIM& 0.5078 & 0.6029   & 0.5920 \\
\cline{3-6}
&&Time (s.)& 4380&  2560   & 13740 \\
\hline 
\end{tabular}
}
\caption{ Restoration results for image $\overline{\mathbf{x}}_1$ with $x^+=10$ and $\sigma^2=4$. Uniform kernel with size $5 \times 5$. Initial SNR= -2.55~dB.}\label{t:2}
\centering
\resizebox{0.9\textwidth}{!}{
\renewcommand{\arraystretch}{1.3}
\begin{tabular}{|l|l|l||l|l|l|}
\cline{4-6}
 \multicolumn{3}{c|}{ }&\shortstack{GAST }& SPoiss& WL2\\
\hhline{---|=|=|=|}
\multirow{9}{*}{\shortstack{ VBA }}&
\multirow{3}{*}{\shortstack{ Approx. 1 }}
&SNR & 13.97 & 15.19  & 16.41  \\
\cline{3-6}
&&SSIM& 0.3544 & 0.4167 & 0.4959 \\
\cline{3-6}
&&Time (s.)& 58  & 64 & 70\\
\hhline{|~|-|----|}
&\multirow{3}{*}{\shortstack{ Approx. 2 \\
$N_s=160$ }}
&SNR& {18.05}  & {19.07} & {19.11}  \\
\cline{3-6}
&&SSIM& {0.6664}  & {0.6930}   & {0.7066} \\
\cline{3-6}
&&Time (s.)& {524}& {498 }  & {491}\\
\hhline{|~|-|----|}
&\multirow{3}{*}{\shortstack{ Approx. 2 \\
$N_s=640$ }}
&SNR& \textbf{18.11} & \textbf{19.12}   & \textbf{19.13} \\
\cline{3-6}
&&SSIM& \textbf{0.6778} & \textbf{0.7034}   & \textbf{0.7152} \\
\cline{3-6}
&&Time (s.)& \textbf{2048}& \textbf{1828 } & \textbf{1735}\\
\hline
\hline
\multirow{6}{*}{\shortstack{ MAP }}&
\multirow{3}{*}{\shortstack{ Discrepancy \\principle}}
&SNR & 16.52& 17.41 & 18.09 \\
\cline{3-6}
&&SSIM& 0.5484&  0.7570  &  0.6732  \\
\cline{3-6}
&&Time (s.)& 594&  583 & 2286\\
\hhline{|~|-|----|}
&\multirow{3}{*}{\shortstack{ Best \\parameter}}
&SNR & 17.83& 18.73 & 19.09 \\
\cline{3-6}
&&SSIM& 0.6519 & 0.6646   & 0.6702 \\
\cline{3-6}
&&Time (s.)& 674&  705   & 4164\\
\hline 
\end{tabular}
}
\caption{ Restoration results for  the image $\overline{\mathbf{x}}_2 $, $x^+=12$ and $\sigma^2=9$. Gaussian kernel with size $25 \times 25$, std 1.6. Initial SNR= 2.21 dB.}\label{t:3}
\end{table}
\begin{table}[t]
\tiny
\centering
\resizebox{0.9\textwidth}{!}{
\renewcommand{\arraystretch}{1.3}
\begin{tabular}{|l |l|l|| l|l|l|}
\cline{4-6}
 \multicolumn{3}{c|}{ }&\shortstack{GAST }& SPoiss& WL2\\
\hhline{---|=|=|=|}
\multirow{9}{*}{\shortstack{ VBA }}&
\multirow{3}{*}{\shortstack{ Approx. 1 }}
&SNR & 11.42 & 11.94    & 12.25  \\
\cline{3-6}
&&SSIM& 0.4184  & 0.4403   &0.4588\\
\cline{3-6}
&&Time (s.)& 45  & 47   & 53  \\
\hhline{|~|-|----|}
&\multirow{3}{*}{\shortstack{ Approx. 2 \\
$N_s=160$ }}
&SNR & 12.04    & 12.31  & 12.28 \\
\cline{3-6}
&&SSIM& 0.4555 & 0.4624  & 0.4627 \\
\cline{3-6}
&&Time (s.)& 328 &  332   &  396 \\
\hhline{|~|-|----|}
&\multirow{3}{*}{\shortstack{ Approx.2 \\
$N_s=640$ }}
&SNR & \textbf{12.09}  & \textbf{12.36}   & \textbf{12.33} \\
\cline{3-6}
&&SSIM& \textbf{0.4617} &\textbf{ 0.4684}  & \textbf{0.4683} \\
\cline{3-6}
&&Time (s.)& \textbf{1965}  & \textbf{ 2051}   &  \textbf{2019} \\
\hline
\hline
\multirow{6}{*}{\shortstack{ MAP }}&
\multirow{3}{*}{\shortstack{ Discrepancy \\principle}}
&SNR & 12.08& 12.38 & 12.08 \\
\cline{3-6}
&&SSIM&0.4523 &  0.4582  &  0.4314 \\
\cline{3-6}
&&Time (s.)& 6252&  3865 & 1929\\
\hhline{|~|-|----|}
&\multirow{3}{*}{\shortstack{ Best \\parameter}}
&SNR & 12.17& 12.45 & 12.37\\
\cline{3-6}
&&SSIM& 0.4531& 0.4576  & 0.4565 \\
\cline{3-6}
&&Time (s.)& 3348&  2441 & 2525\\
\hline 
\end{tabular}
}
\caption{ Restoration results for the image $\overline{\mathbf{x}}_3 $ with $x^+=15$ and $\sigma^2=9$. Uniform kernel with size $5 \times 5$. Initial SNR= 3.14~dB.}\label{t:4}
\centering
\resizebox{0.9\textwidth}{!}{
\renewcommand{\arraystretch}{1.3}
\begin{tabular}{|l|l |l|| l|l|l|}
\cline{4-6}
 \multicolumn{3}{c|}{ }&\shortstack{GAST }& SPoiss& WL2\\
\hhline{---|=|=|=|}
\multirow{9}{*}{\shortstack{ VBA }}&
\multirow{3}{*}{\shortstack{ Approx. 1 }}
&SNR & \textbf{13.80}  & \textbf{13.90}   &\textbf{ 13.66}  \\
\cline{3-6}
&&SSIM&  \textbf{0.5752}& \textbf{0.5769} &  \textbf{0.5582}\\
\cline{3-6}
&&Time (s.)& \textbf{29}  & \textbf{34}  & \textbf{88 } \\
\hhline{|~|-|----|} 
&\multirow{3}{*}{\shortstack{ Approx. 2 \\
$N_s=160$ }}
&SNR &13.72   & 13.76   & 13.56 \\
\cline{3-6}
&&SSIM& 0.5667 & 0.5641 & 0.5491\\
\cline{3-6}
&&Time (s.)& 555& 580  & 757\\
\hhline{|~|-|----|} 
&\multirow{3}{*}{\shortstack{ Approx. 2 \\
$N_s=640$ }}
&SNR &13.78  & 13.81   & 13.61 \\
\cline{3-6}
&&SSIM& 0.5715 &  0.5687 & 0.5534\\
\cline{3-6}
&&Time (s.)& 1897& 2170   & 2719\\
\hline
\hline
\multirow{6}{*}{\shortstack{ MAP }}&
\multirow{3}{*}{\shortstack{ Discrepancy \\principle}}
&SNR & 13.48& 13.60 & 13.39 \\
\cline{3-6}
&&SSIM&  0.5348 &  0.5393 &  0.5103 \\
\cline{3-6}
&&Time (s.)& 3049&  769 & 2644\\
\hhline{|~|-|----|}
&\multirow{3}{*}{\shortstack{ Best \\parameter}}
&SNR & 13.60& 13.71 & 13.75\\
\cline{3-6}
&&SSIM& 0.5568& 0.5605  & 0.5602 \\
\cline{3-6}
&&Time (s.)& 8390&  8477  & 2397\\
\hline 
\end{tabular}
}
\caption{ Restoration results for the image $\overline{\mathbf{x}}_4 $ with $x^+=20$ and $\sigma^2=9$. Uniform kernel with size $5 \times 5$. Initial SNR= 7.64 dB.}\label{t:5}
\end{table}
\begin{table}[t]
\centering
\tiny
\resizebox{0.9\textwidth}{!}{
\renewcommand{\arraystretch}{1.3}
\begin{tabular}{|l |l| l||l|l|l|}
\cline{4-6}
 \multicolumn{3}{c|}{ }&\shortstack{GAST }& SPoiss& WL2\\
\hhline{---|=|=|=|}
\multirow{9}{*}{\shortstack{ VBA }}&
\multirow{3}{*}{\shortstack{ Approx. 1 }}
&SNR &19.5  & 20.23   &\textbf{20.71} \\
\cline{3-6}
&&SSIM& 0.6649 & 0.7135 &\textbf{0.7793}\\
\cline{3-6}
&&Time (s.)&  16& 17  &\textbf{34 } \\
\hhline{|~|-|----|}
&\multirow{3}{*}{\shortstack{ Approx. 2 \\
$N_s=160$ }}
&SNR & 20.27  & 20.59  &20.56 \\
\cline{3-6}
&&SSIM& 0.7473 & 0.7660 & 0.7877 \\
\cline{3-6}
&&Time (s.)& 61 & 64   & 94\\
\hhline{|~|-|----|}
&\multirow{3}{*}{\shortstack{ Approx.2 \\
$N_s=640$ }}
&SNR & \textbf{20.35} & \textbf{20.67}   & 20.64  \\
\cline{3-6}
&&SSIM& \textbf{0.7563} &  \textbf{0.7798}  & 0.7989 \\
\cline{3-6}
&&Time (s.)& \textbf{195}& \textbf{197}  & 272  \\
\hline
\hline
\multirow{6}{*}{\shortstack{ MAP }}&
\multirow{3}{*}{\shortstack{ Discrepancy \\principle}}
&SNR & 19.39& 19.50 & 18.70 \\
\cline{3-6}
&&SSIM& 0.7458&  0.7550  &  0.7448  \\
\cline{3-6}
&&Time (s.)& 717& 1201 & 1087\\
\hhline{|~|-|----|}
&\multirow{3}{*}{\shortstack{ Best \\parameter}}
&SNR & 20.15& 20.41 & 20.44\\
\cline{3-6}
&&SSIM& 0.7535& 0.7594  & 0.7628 \\
\cline{3-6}
&&Time (s.)& 559&  125  & 253\\
\hline 
\end{tabular}
}
\caption{ Restoration results for image $\overline{\mathbf{x}}_5 $ with $x^+=20$ and  $\sigma^2=9$. Gaussian kernel with size $7 \times 7$, std 1. Initial SNR= 8.55 dB.}\label{t:6}
\centering
\resizebox{0.9\textwidth}{!}{
\renewcommand{\arraystretch}{1.3}
\begin{tabular}{|l|l |l|| l|l|l|}
\cline{4-6}
 \multicolumn{3}{c|}{ }&\shortstack{GAST }& SPoiss& WL2\\
\hhline{---|=|=|=|}
\multirow{9}{*}{\shortstack{ VBA }}&
\multirow{3}{*}{\shortstack{ Approx. 1 }}
&SNR &\textbf{ 14.17}  & {14.13}   & 13.90  \\
\cline{3-6}
&&SSIM& \textbf{0.7655} & {0.7647} & 0.7569\\
\cline{3-6}
&&Time (s.)&\textbf{9}   &  {8} &  26\\
\hhline{|~|-|----|}
&\multirow{3}{*}{\shortstack{ Approx.2 \\
$N_s=160$ }}
&SNR &14.1 &  14.13 & {14.09}  \\
\cline{3-6}
&&SSIM& 0.7605 &  0.7619 &{ 0.7620} \\
\cline{3-6}
&&Time (s.)& 104& 148   &{246} \\
\hhline{|~|-|----|}
&\multirow{3}{*}{\shortstack{ Approx. 2 \\
$N_s=640$ }}
&SNR &14.16  & \textbf{14.19} & \textbf{14.16}  \\
\cline{3-6}
&&SSIM& 0.7639 &\textbf{ 0.7650} &\textbf{ 0.7658 } \\
\cline{3-6}
&&Time (s.)& 332& \textbf{ 479} &\textbf{913}\\
\hline
\hline 
\multirow{6}{*}{\shortstack{ MAP }}&
\multirow{3}{*}{\shortstack{ Discrepancy \\principle}}
&SNR & 13.23& 13.29 & 13.32 \\
\cline{3-6}
&&SSIM& 0.7104&  0.7126  &  0.7117  \\
\cline{3-6}
&&Time (s.)& 2796&  4900 & 1045\\
\hhline{|~|-|----|}
&\multirow{3}{*}{\shortstack{ Best \\parameter}}
&SNR & 13.77& 13.79 & 13.84\\
\cline{3-6}
&&SSIM& 0.7565& 0.7570 & 0.7591\\
\cline{3-6}
&&Time (s.)& 10084&  10005 & 821\\
\hline 
\end{tabular}
}
\caption{ Restoration results for the image $\overline{\mathbf{x}}_6 $ with $x^+=100$ and $\sigma^2=36$. Uniform kernel with size $3 \times 3$. Initial SNR= 10.68 dB.}\label{t:9}
\end{table}
We evaluate the performance of the proposed approach for the restoration of images degraded by both blur and PG noise. We consider six test images, displayed in Figure \ref{fig:im}, whose intensities have been rescaled so that pixel values belong to a chosen interval $[0,x^+]$. Images $\overline{\mathbf{x}}_1$ and $\overline{\mathbf{x}}_6$ are HST astronomical images while images $\overline{\mathbf{x}}_2$, $\overline{\mathbf{x}}_3$, $\overline{\mathbf{x}}_4$ and $\overline{\mathbf{x}}_5$ correspond to the set of confocal microscopy images considered in \cite{clar:Chouzenoux_2015}. These images are then artificially degraded by an operator $\mathbf{H}$ modeling spatially invariant blur with point spread function $h$ and by PG noise with variance $\sigma^2$.

\subsubsection{Comparison with MAP approaches} 
\f{
In this first set of experiments, we choose a standard total variation prior, i.e.  $\kappa = 1/2$ and for every pixel $j \in \left\lbrace 1,\ldots, N \right\rbrace$, $\mathbf{D}_j\x ~=~ \big[[\nabla^h \x]_j, [\nabla^v \x]_j\big]^\top~\in~\mathbb{R}^2$
where $\nabla^h$ and $\nabla^v$ are the discrete gradients computed in the horizontal and vertical directions. As a result, $J =N$ and $S=2$.
}
The goal of our experiments is twofold. First, for each likelihood, we compare the accuracy of the two proposed approximations of the covariance matrix described in Section~\ref{SecImplIss} namely the diagonal approximation (denoted as approximation~1) and the Monte Carlo averaging strategy (designated as approximation 2) with different number of samples $N_s$, namely $N_s = 160$ or $640$. Second, the proposed method is compared with state-of-the-art algorithms that compute the MAP estimate \f{for the considered likelihoods}. More specifically, as GAST and SPoiss \f{data fidelity terms} are convex and Lipschitz differentiable, we use the method presented in \cite{clar:Chouzenoux_2015} where a primal-dual splitting algorithm was proposed to minimize convex penalized criteria \f{in the context of Poisson-Gaussian image restoration}. For the WL2 approximation, \f{the corresponding \f{data fidelity function} is not convex so the previous method could not be applied anymore. We thus }consider the variable metric forward-backward algorithm \f{proposed} in \cite{clar:Repetti_2012} \f{for the minimization of penalized WL2 functionals}. For \f{the aforementioned MAP approaches, it is necessary to set the regularization parameter $\gamma$ that balances the fidelity to the observation model and the considered prior. In this respect}, we test two variants. \f{In the first variant}, we estimate the regularization parameter using \f{an approach based on the} discrepancy principle \cite{bardsley2009regularization,bertero2010discrepancy, zanni2015numerical}. 
In the second variant, $\gamma$ is adjusted empirically to achieve the maximum Signal-to-Noise Ratio (SNR) value defined as SNR$=20 \log_{10} \left( \frac{\Vert \overline{\x} \Vert}{\Vert \overline{\x}-\hat{\x} \Vert} \right)$, which requires the availability of the true image. 
\begin{figure*}[h!]
\centering
\begin{tabular}{ccc}
\subfloat[][GAST - Approx. 1]{\includegraphics[scale=0.45]{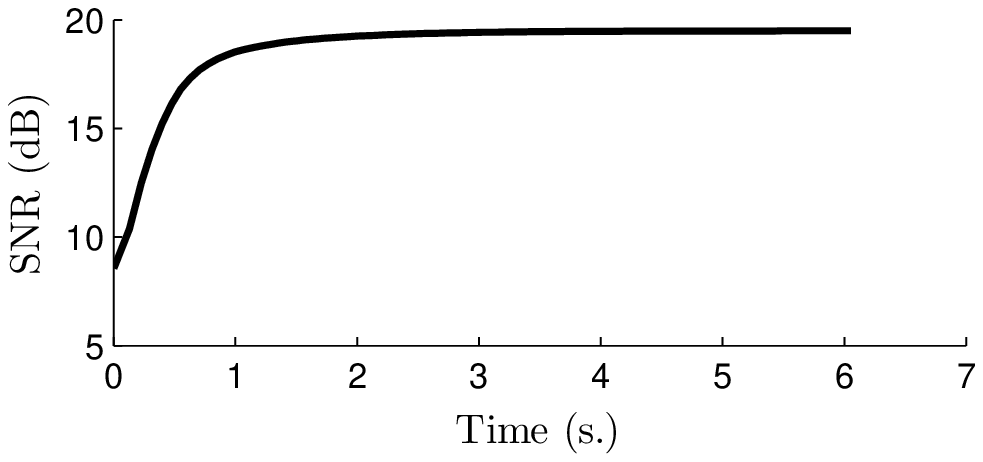} }&\subfloat[][GAST - Approx. 2 ($N_s=160$)]{\includegraphics[scale=0.45]{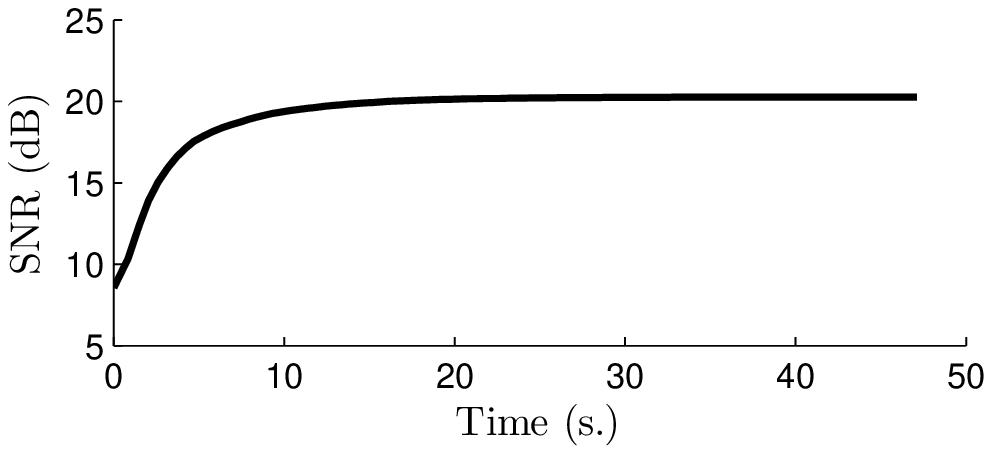}}&  \subfloat[][GAST - Approx. 2 ($N_s=640$)]{\includegraphics[scale=0.45]{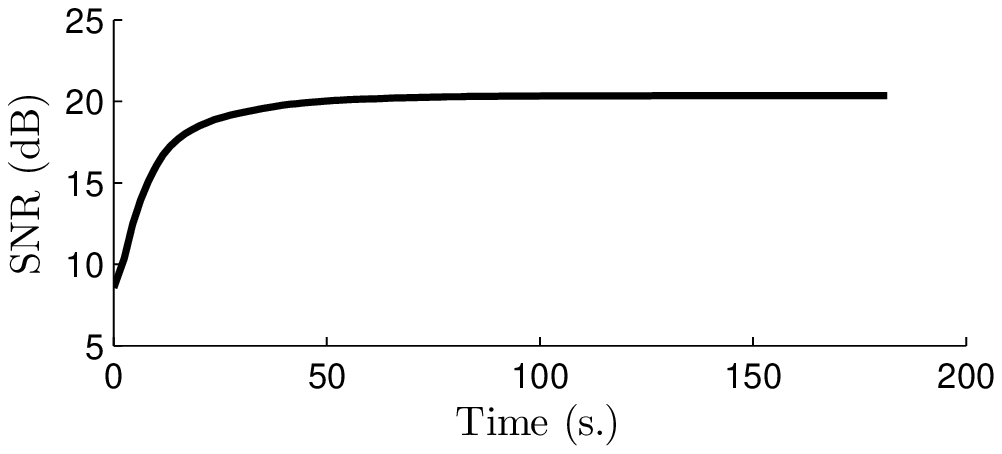}}\\
\subfloat[][SPoiss - Approx. 1]{\includegraphics[scale=0.45]{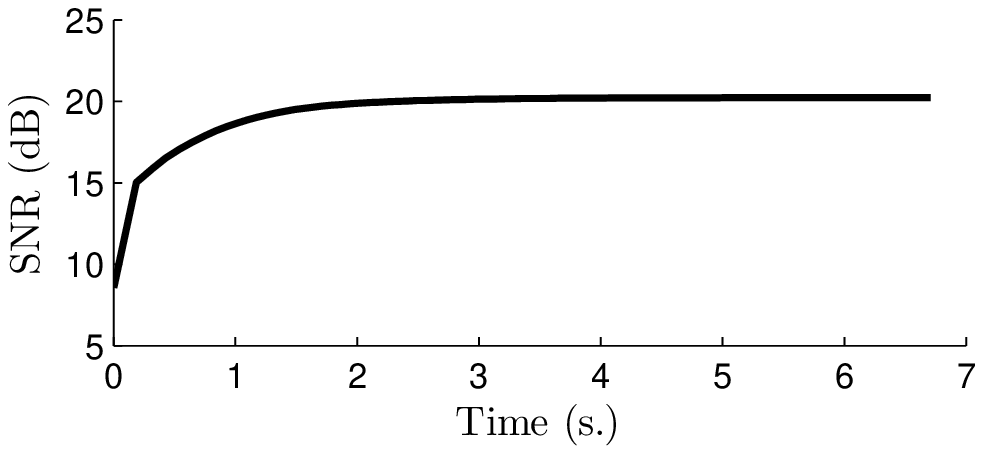} }& \subfloat[][SPoiss - Approx. 2 ($N_s=160$)]{\includegraphics[scale=0.45]{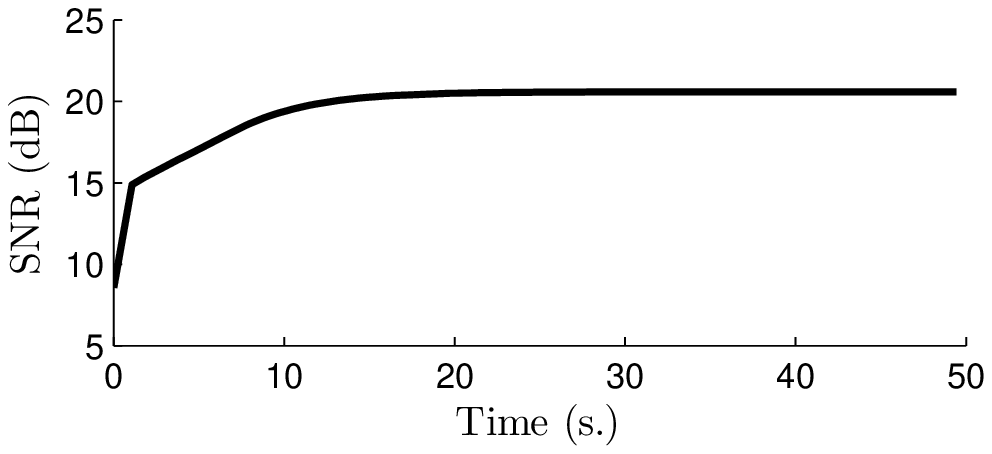}}& \subfloat[][SPoiss - Approx. 2 ($N_s=640$)]{\includegraphics[scale=0.45]{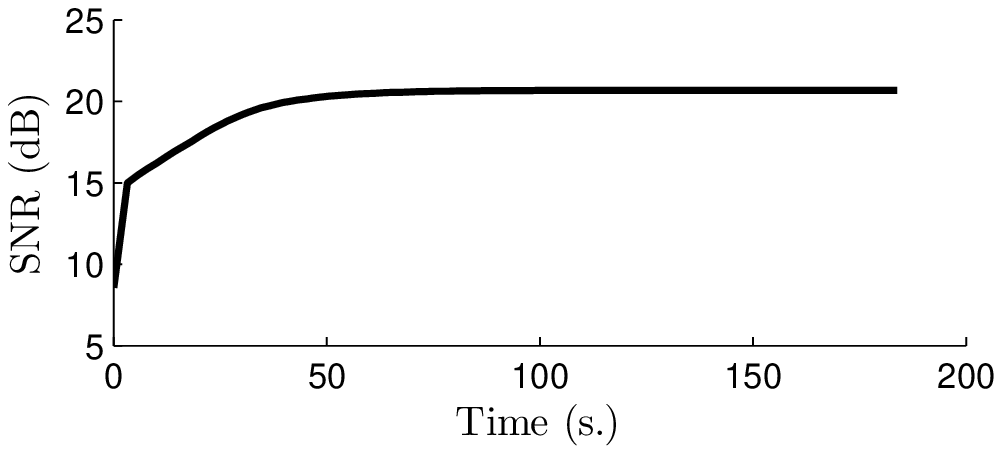}}\\
\subfloat[][WL2 - Approx. 1]{\includegraphics[scale=0.45]{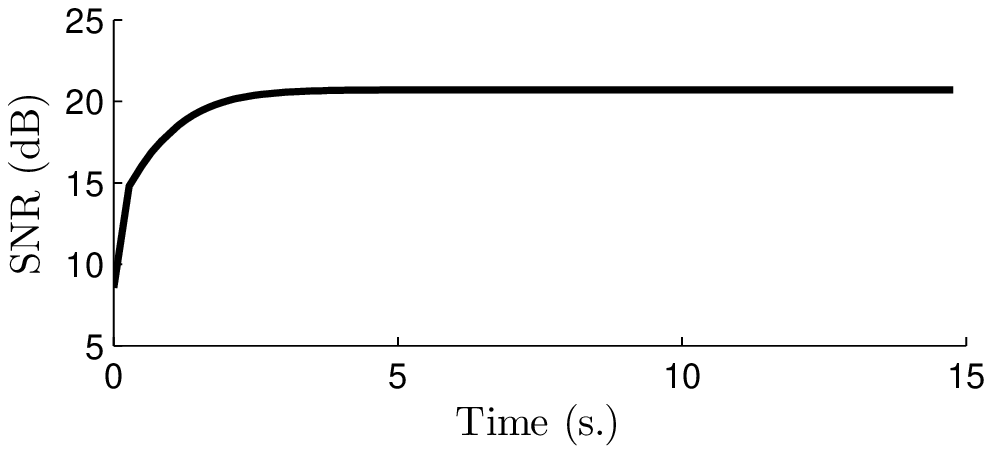} }& \subfloat[][WL2 - Approx. 2 ($N_s=160$)]{\includegraphics[scale=0.45]{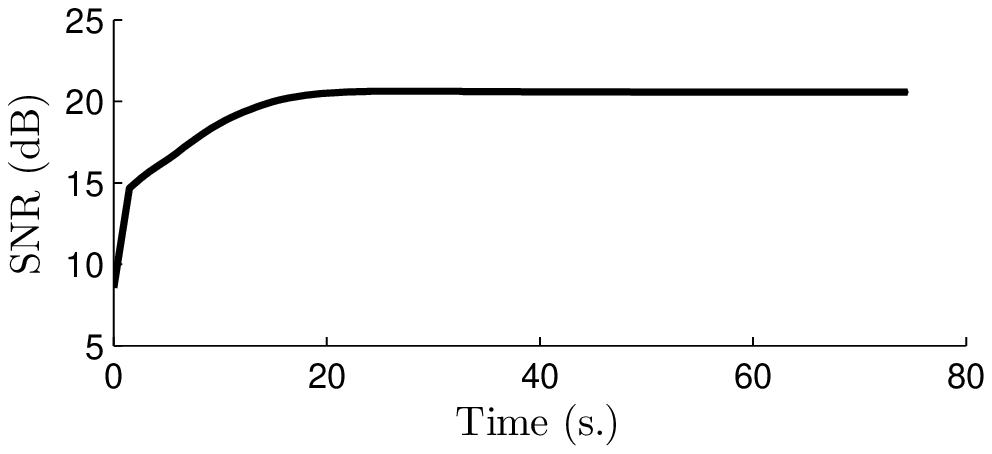}}& \subfloat[][WL2 - Approx. 2 ($N_s=640$)]{\includegraphics[scale=0.45]{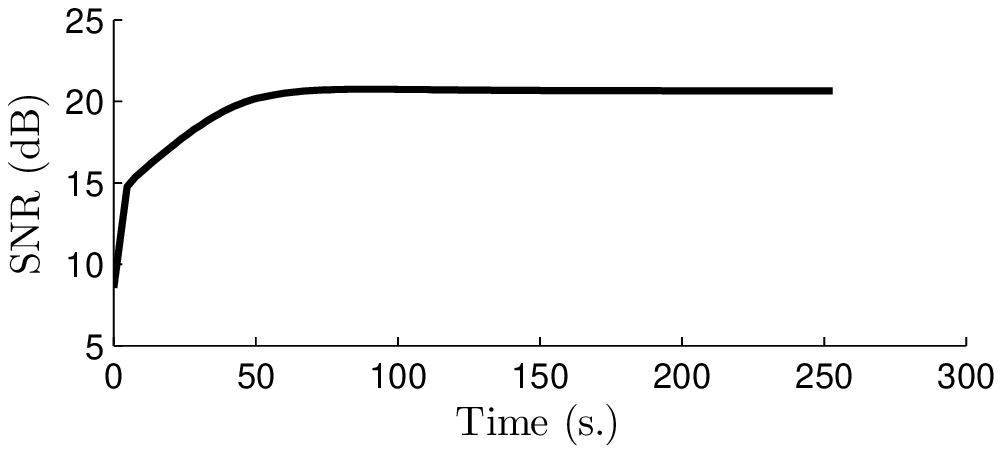}}
\end{tabular}
\caption{Evolution of SNR with respect to time for image $\bar{\mathbf{x}}_5$ using different data-fidelity terms and covariance approximations.}
\label{figcorr}
\end{figure*}

%
%
Tables \ref{t:2}-\ref{t:9} report the results obtained with the different images in terms of SNR, SSIM \cite{wang2004image}, and approximate computation time needed for convergence. For each likelihood, we emphasize in bold the approximation of the covariance matrix that achieves the best quantitative result in the shortest computational time. Simulations were performed on an Intel(R) Xeon(R) CPU E5-2630, @ 2.40~GHz, using a Matlab7 implementation. All tested methods were initialized with the degraded image. \f{Moreover, the initial value of the regularization parameter results from a maximum likelihood estimation performed on the degraded image}. The Monte Carlo averaging approximation was computed using parallel implementation with $16$ cores by means of the command \emph{PARFOR} of the Matlab\textsuperscript{\textregistered} Parallel Computing Toolbox\textsuperscript{TM}. \f{The iterations of VBA were run until the following stopping criterion is satisfied: $\dfrac{\Vert \mathbf{x}^{(t+1)}-\mathbf{x}^{(t)} \Vert }{\Vert \mathbf{x}^{(t)} \Vert }\leqslant \varepsilon$. We have set $\varepsilon=10^{-6}$, as it was observed to lead to a practical stabilization in terms of restoration quality. This can be checked by inspecting Figure \ref{figcorr} illustrating the evolution of the SNR of the restored image along time, until the achievement of the stopping criterion, in the test case from Table \ref{t:6}. For the MAP-based approaches, the computational time includes the search of the regularization parameter.} 

One can observe that in most studied situations (see~Tables~\ref{t:4}-\ref{t:9}), the diagonal approximation of the covariance matrix appears to give satisfactory qualitative results after a small computation time. However, in few other situations (see Tables \ref{t:2} and \ref{t:3}), it fails to capture the real qualitative structures of the covariance matrix leading to a poorer performance. The latter issue is well alleviated by using the Monte Carlo approximation where good results, in terms of image quality, are achieved within $N_s = 160$ samples which is equivalent to a relative approximation error equal to $11 \%$.  A few improvements are observed by decreasing the approximation error to $5 \%$ using $N_s = 640$ samples. 

We also notice that the GAST approximation does not seem to be suitable for very low count images (see Tables \ref{t:2} and \ref{t:3}), whereas, the other likelihoods lead to competitive results in all the experiments. The best tradeoff between restoration quality and small computational time seems to be achieved by the WL2 approximation. 

Finally, it can be observed that, in Tables \ref{t:2} and \ref{t:4}, our VBA method yields comparable performance in terms of SNR to the MAP estimate when the latter is computed with the optimal regularization parameter, while our approach requires less time to converge. In the other experiments, our approach leads to the best qualitative results. For instance, in Table \ref{t:3}, the gain in terms of SNR reaches up to 0.2 dB compared with the MAP estimator using the best regularization parameter, but our approach needs more time to converge. In Tables \ref{t:5}, \ref{t:6} and \ref{t:9}, we achieve both the best quantitative results and the smallest computational time. It should be noted that for most tested scenarii, discrepancy based approaches perform relatively poorly compared with the other methods, especially in the case of low count images (see Table \ref{t:2}). 

\begin{figure}[htb]
\centering
\begin{tabular}{cc}
\subfloat[][Degraded image with SNR= -2.55 dB (Uniform kernel $5 \times 5$, $x^+=10$ and $\sigma^2=4$). ]{\includegraphics[scale=0.45]{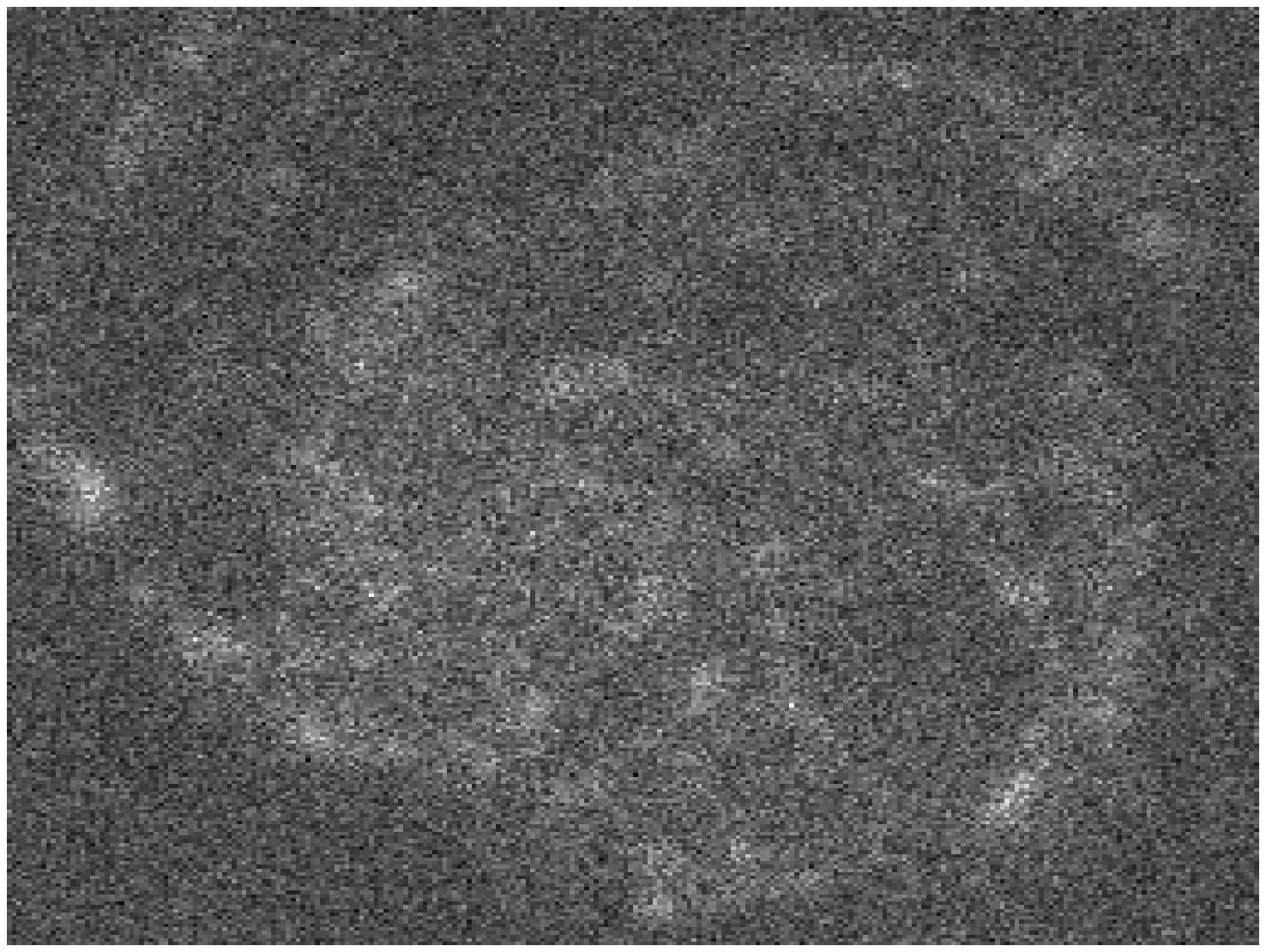} }&
 \subfloat[][Restored image with VBA approach using the Monte Carlo approximation with $640$ samples: SNR= 10.27 dB]{\includegraphics[scale=0.45]{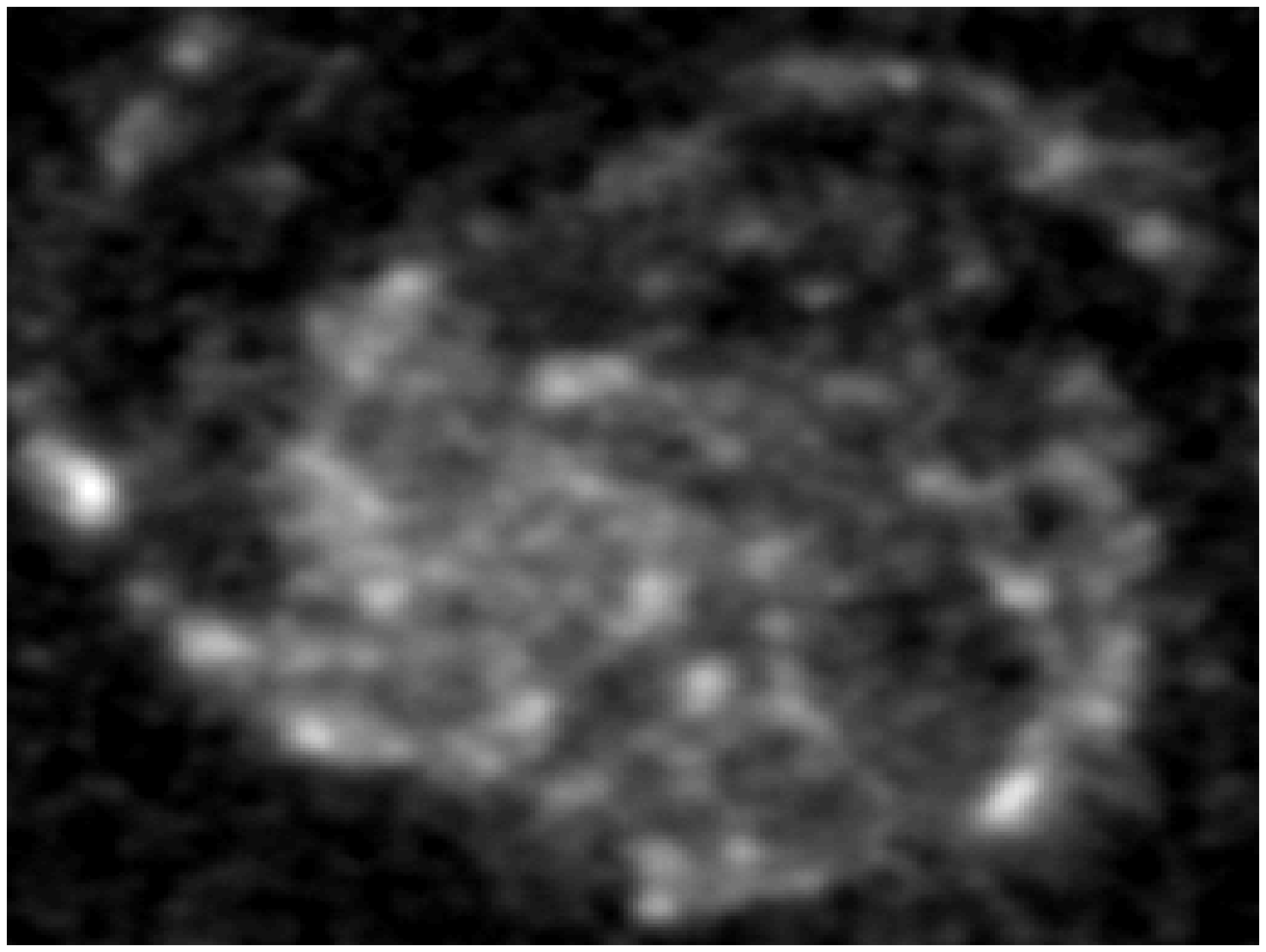} }
\\
  \subfloat[][Restored image with discrepancy principle: SNR= 10.17 dB]{\includegraphics[scale=0.45]{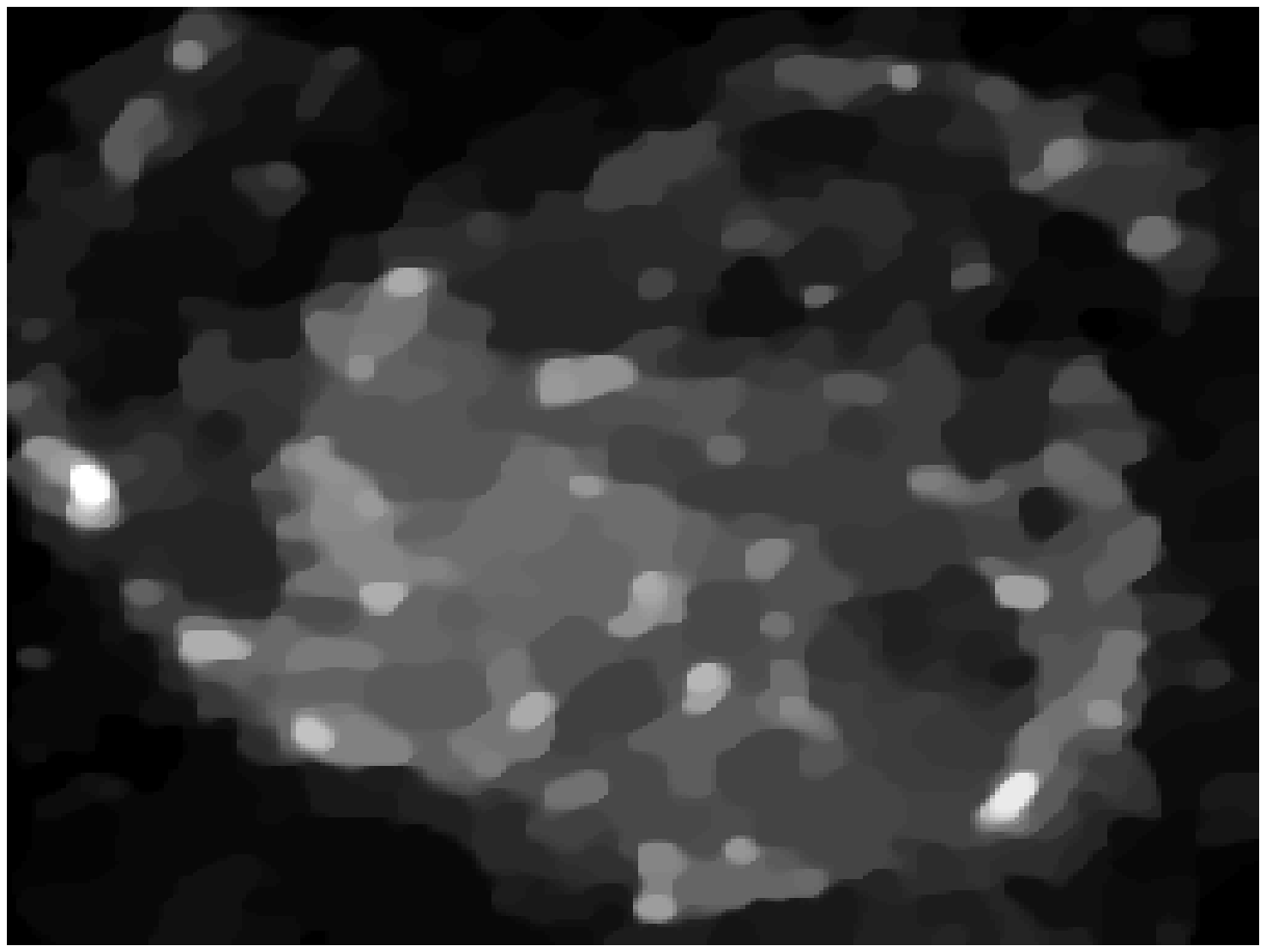} }&
   \subfloat[][Restored image with best parameter: SNR= 10.39 dB]{\includegraphics[scale=0.45]{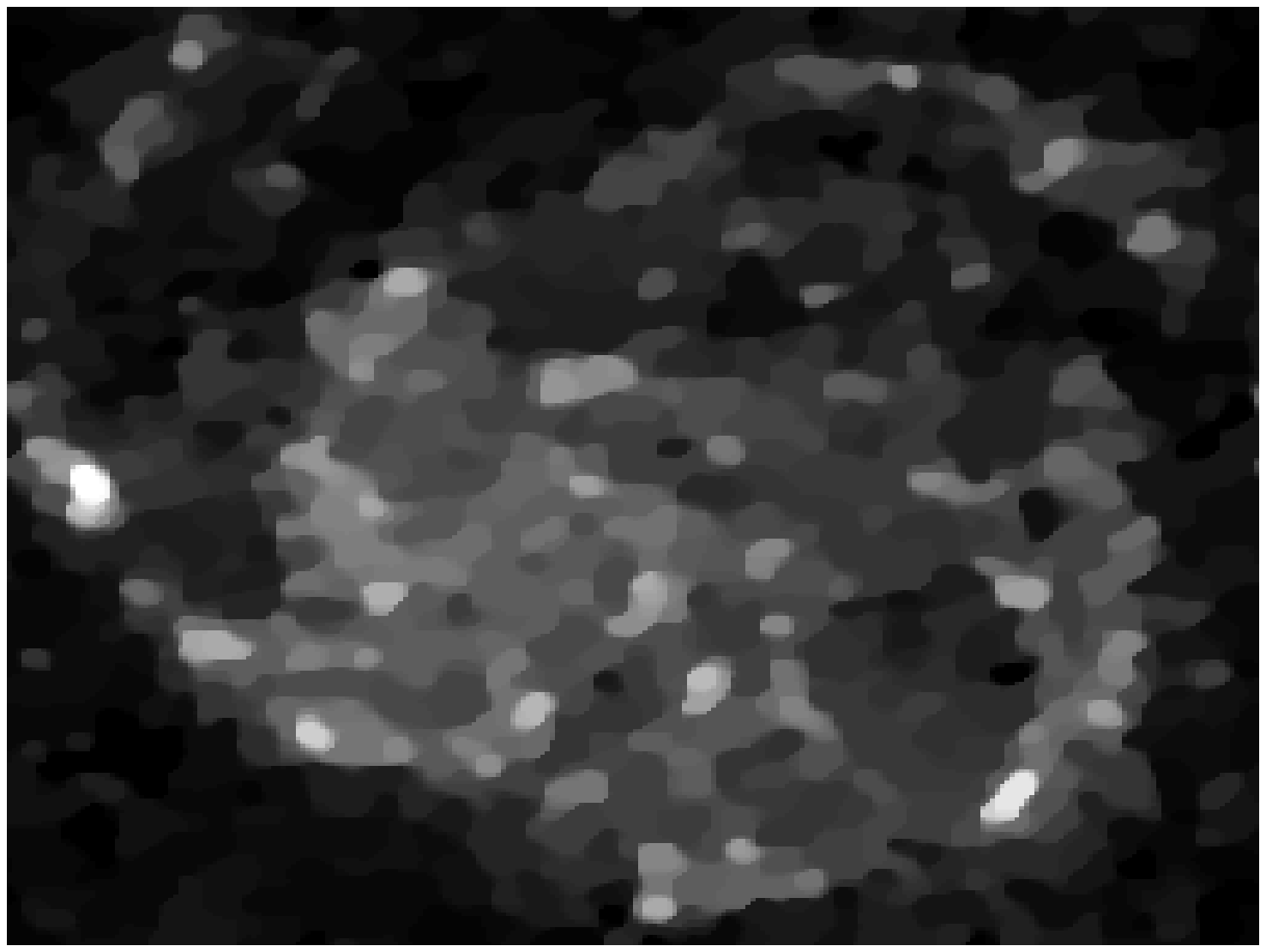} }
\end{tabular}
\caption{Restoration results for image $\bar{\mathbf{x}}_1$ using WL2 approximation.}
\label{fig1}
\end{figure}


\begin{figure}[htb]
\centering
\begin{tabular}{cc}
\subfloat[][Degraded image with SNR= 2.21 dB (Gaussian kernel $25 \times 25$, std 1.6, $x^+=12$ and $\sigma^2=9$).]{\includegraphics[scale=0.45]{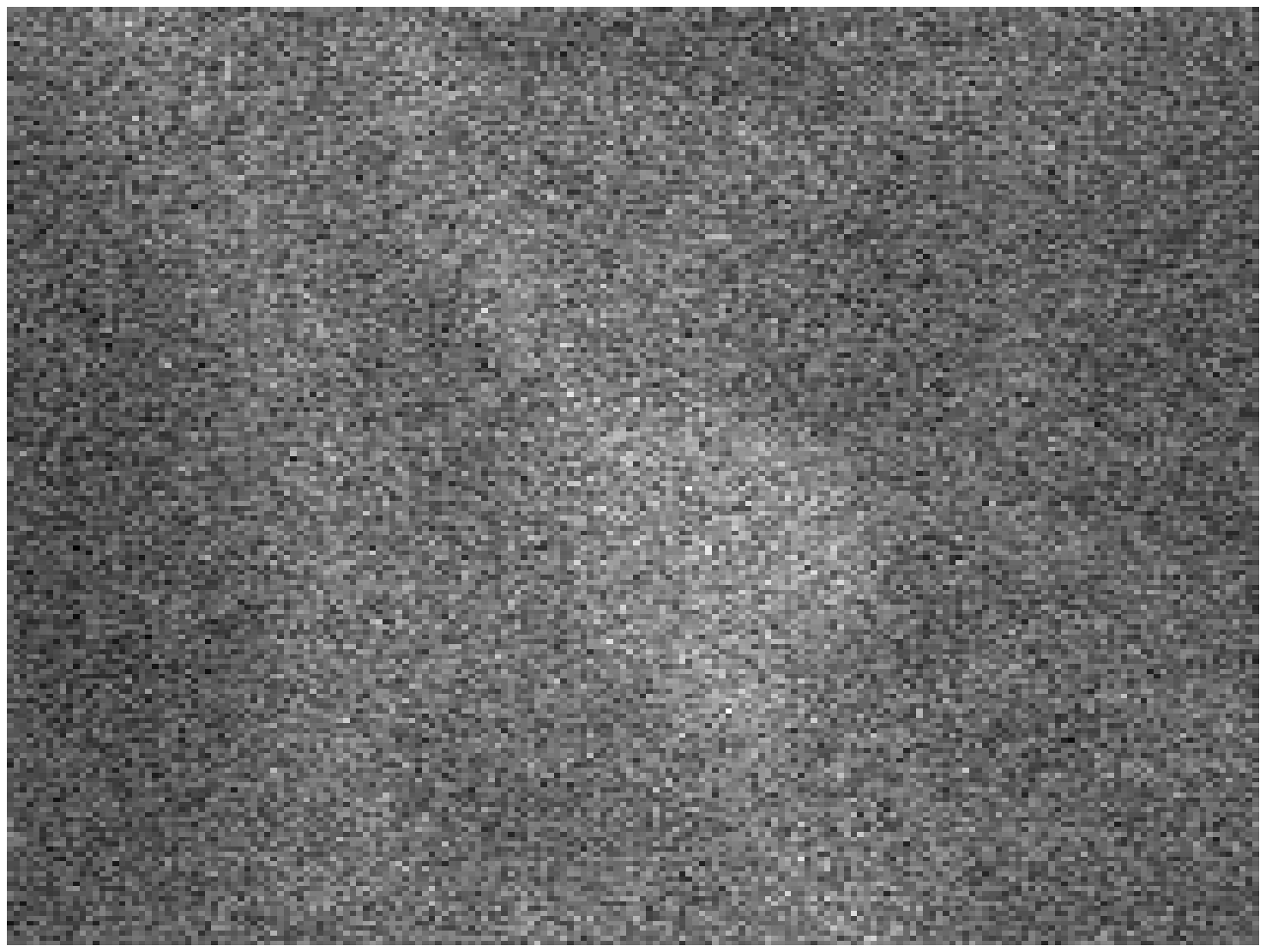} }&
 \subfloat[][Restored image with VBA approach using the Monte Carlo approximation with $640$ samples: SNR= 19.12 dB]{\includegraphics[scale=0.45]{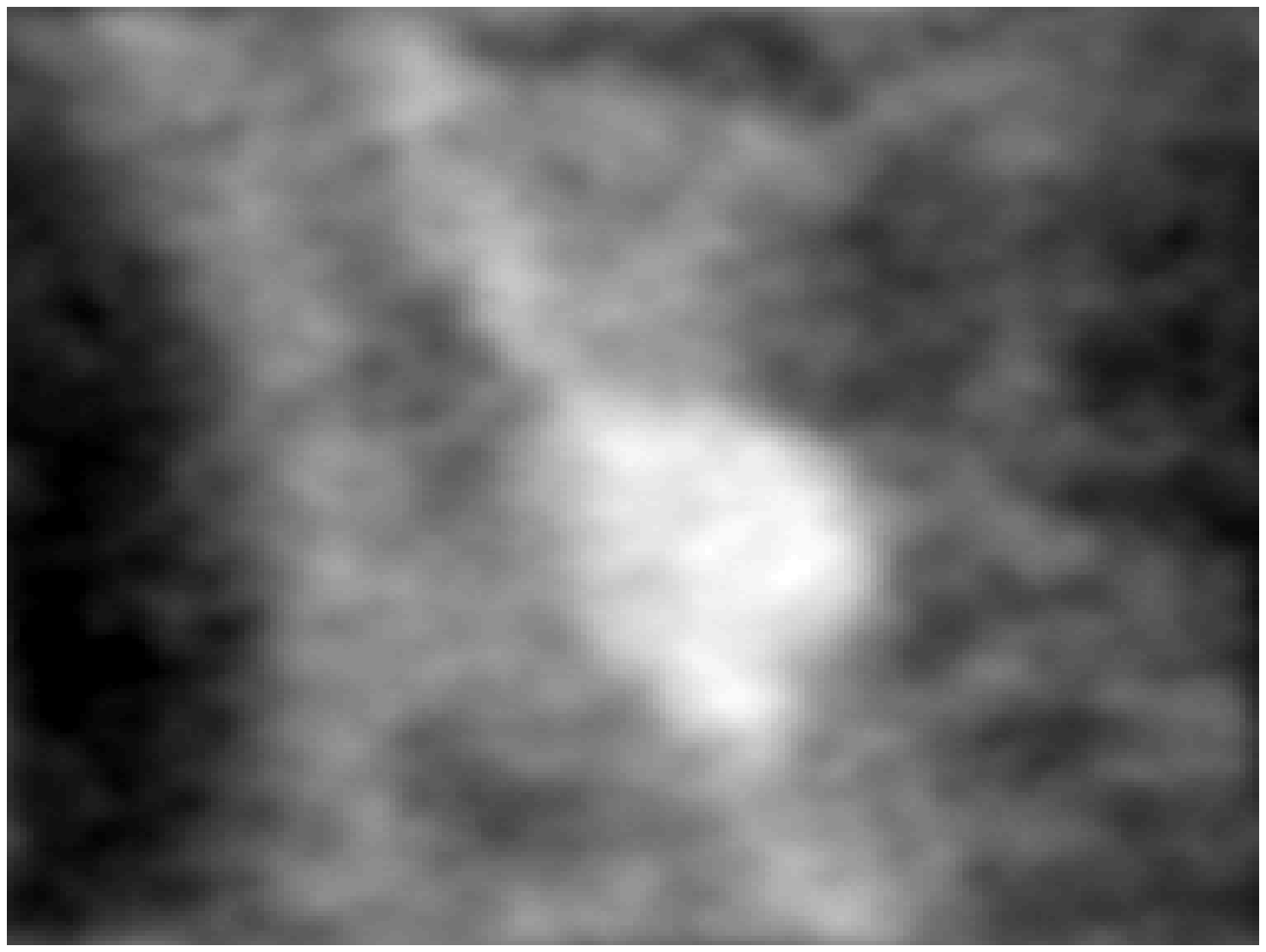} }\\
  \subfloat[][Restored image with discrepancy principle: SNR= 17.41 dB]{\includegraphics[scale=0.45]{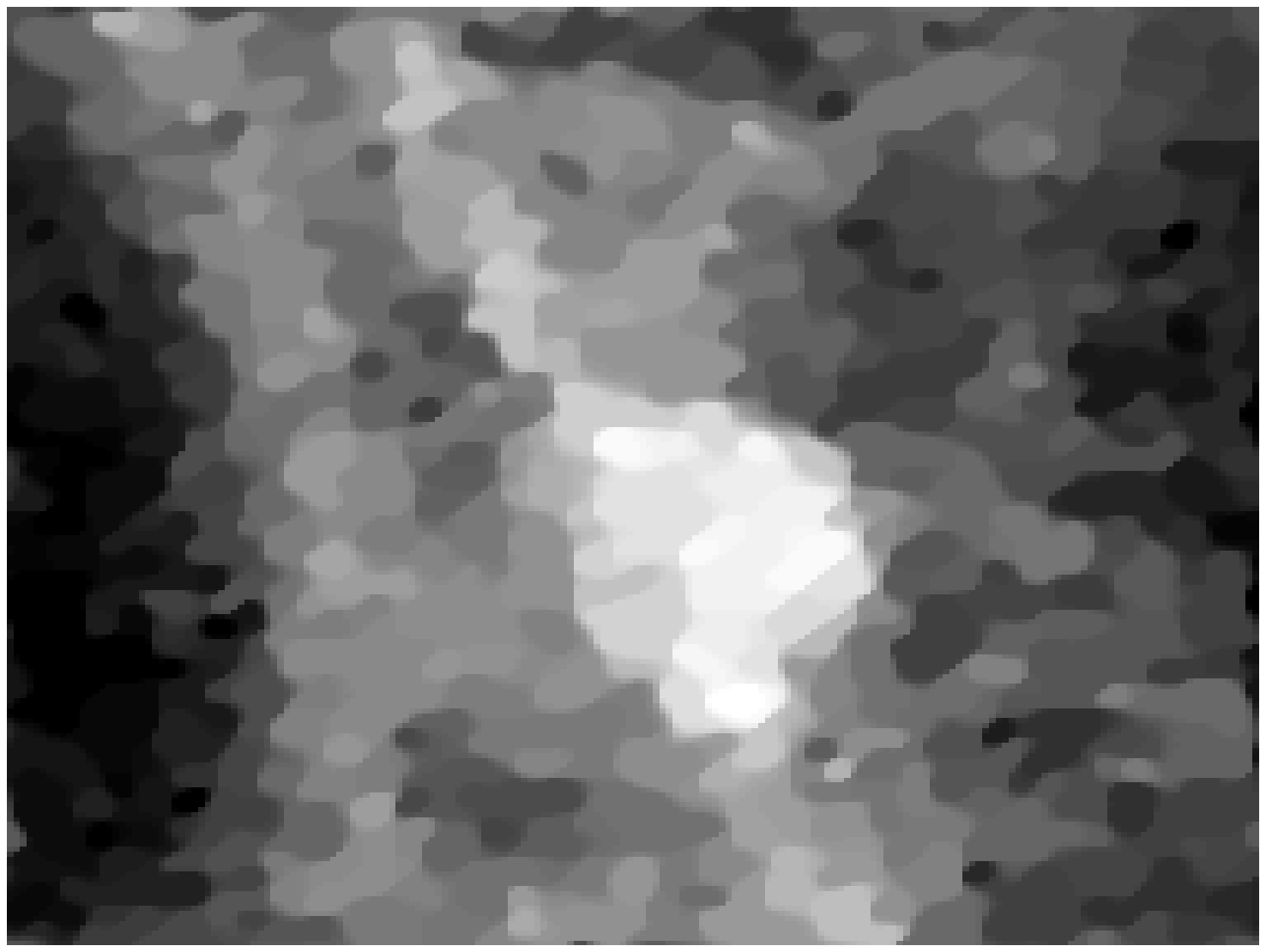} }&
   \subfloat[][Restored image with best parameter: SNR= 18.73 dB]{\includegraphics[scale=0.45]{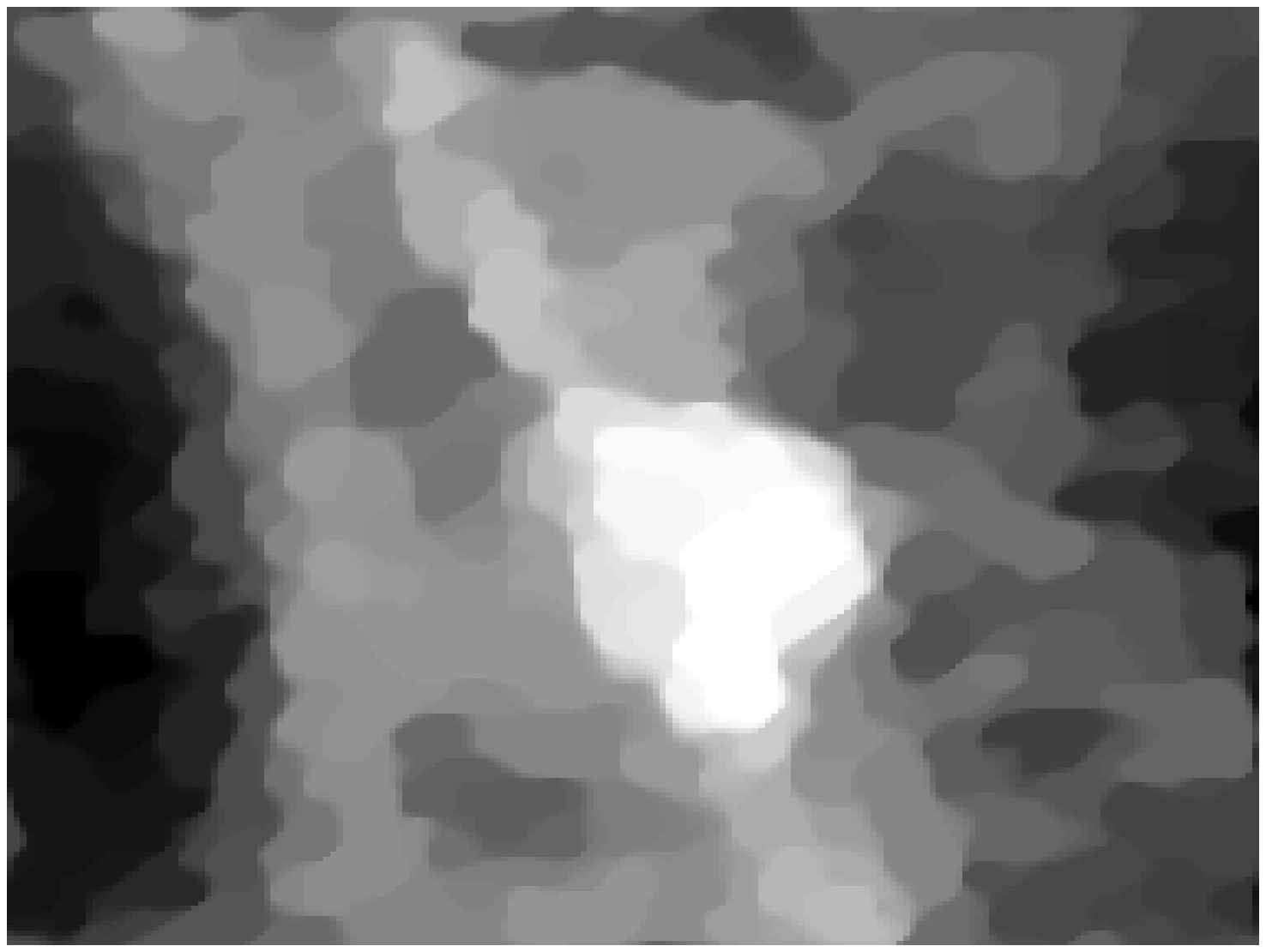} }
\end{tabular}
\caption{Restoration results for image $\bar{\mathbf{x}}_2$ using SPoiss approximation.}
\label{fig2}
\end{figure}


\begin{figure}[htb]
\centering
\begin{tabular}{cc}
\subfloat[][Degraded image with SNR= 3.14 dB (Uniform kernel $5 \times 5$, $x^+=15$ and $\sigma^2=9$).]{\includegraphics[scale=0.45]{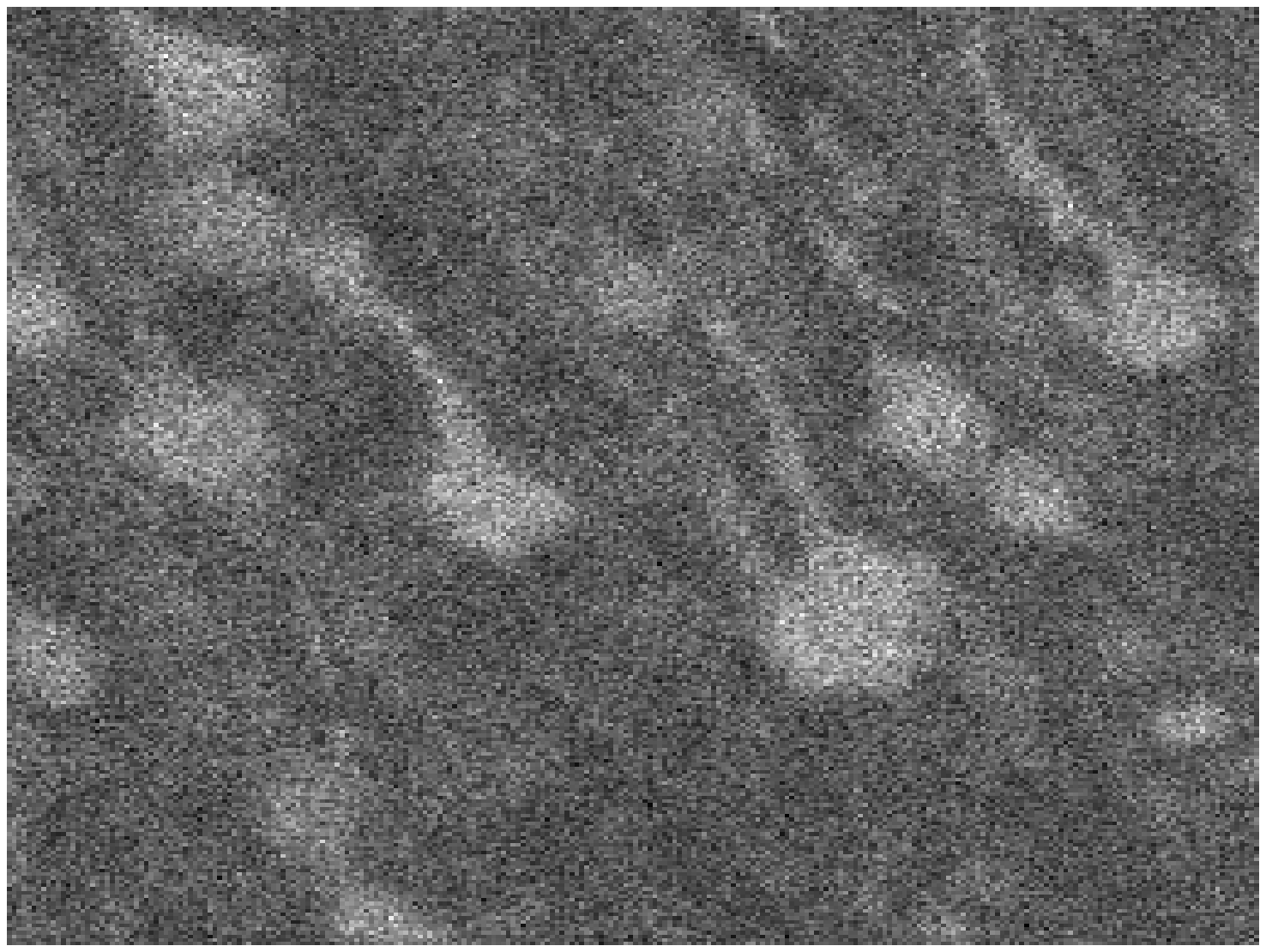} }&
 \subfloat[][Restored image with VBA approach using the Monte Carlo approximation with $640$ samples: SNR= 12.36 dB]{\includegraphics[scale=0.45]{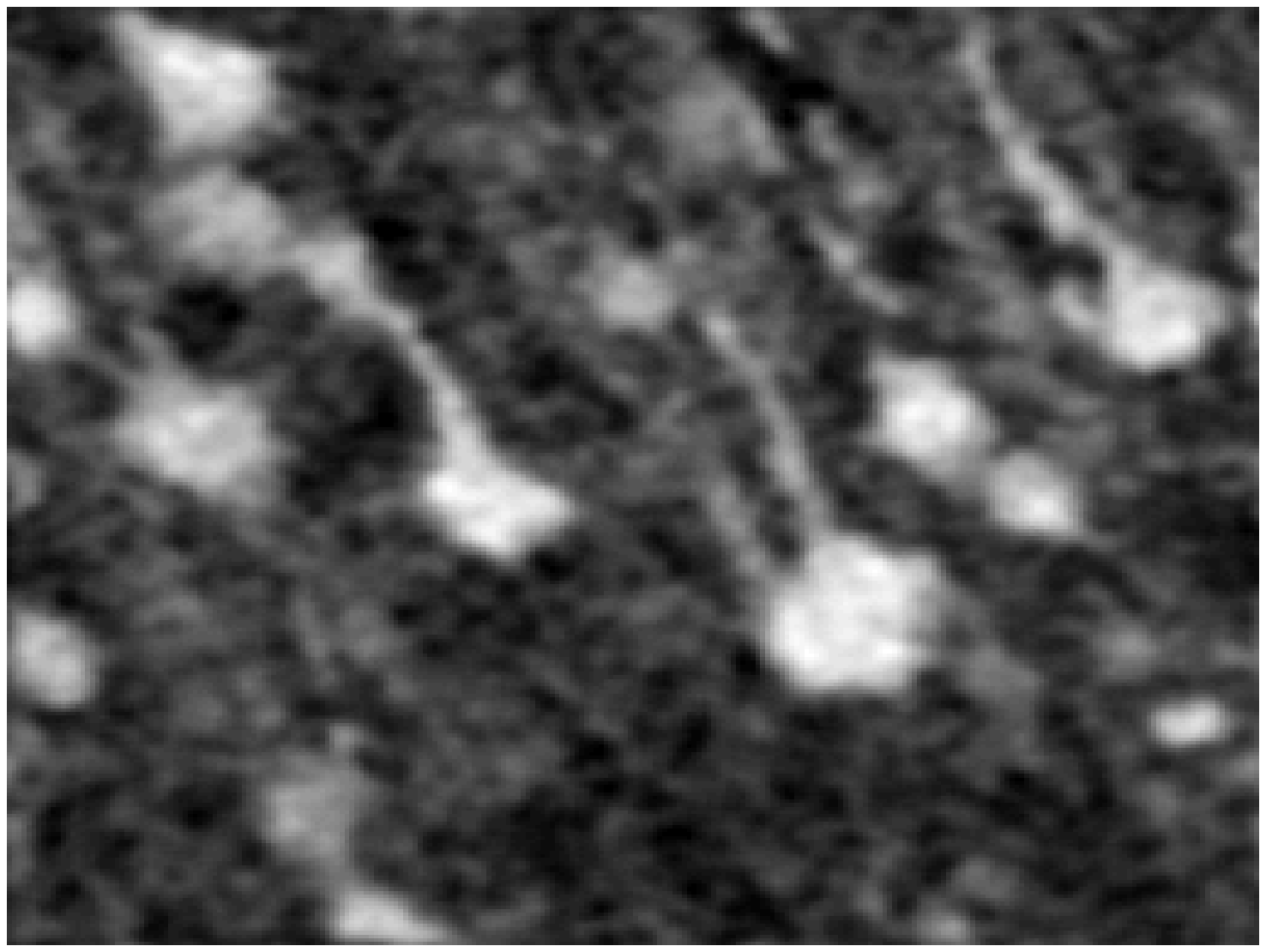} }\\
  \subfloat[][Restored image with discrepancy principle: SNR= 12.38 dB]{\includegraphics[scale=0.45]{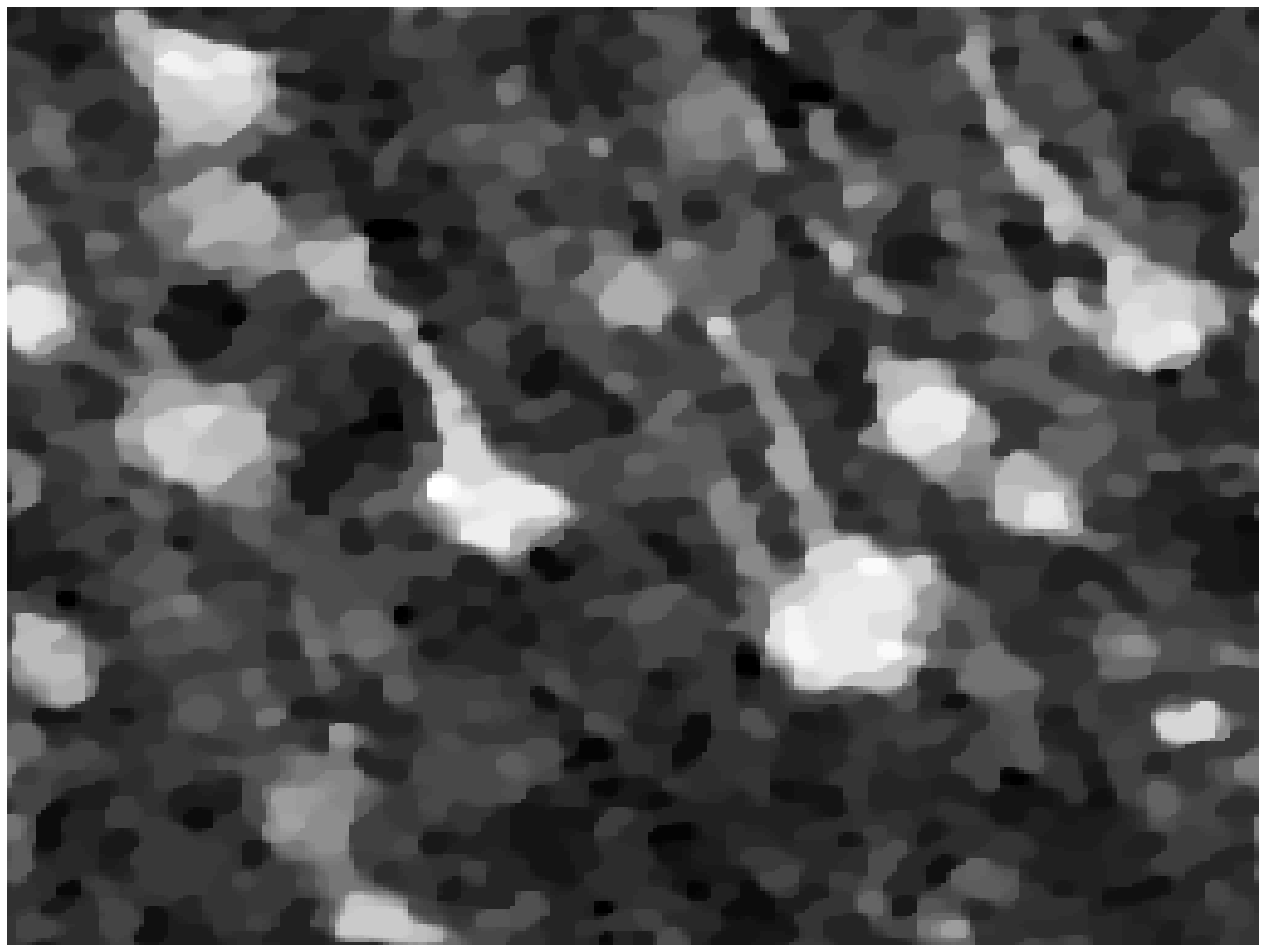} }&
   \subfloat[][Restored image with best parameter: SNR= 12.45 dB]{\includegraphics[scale=0.45]{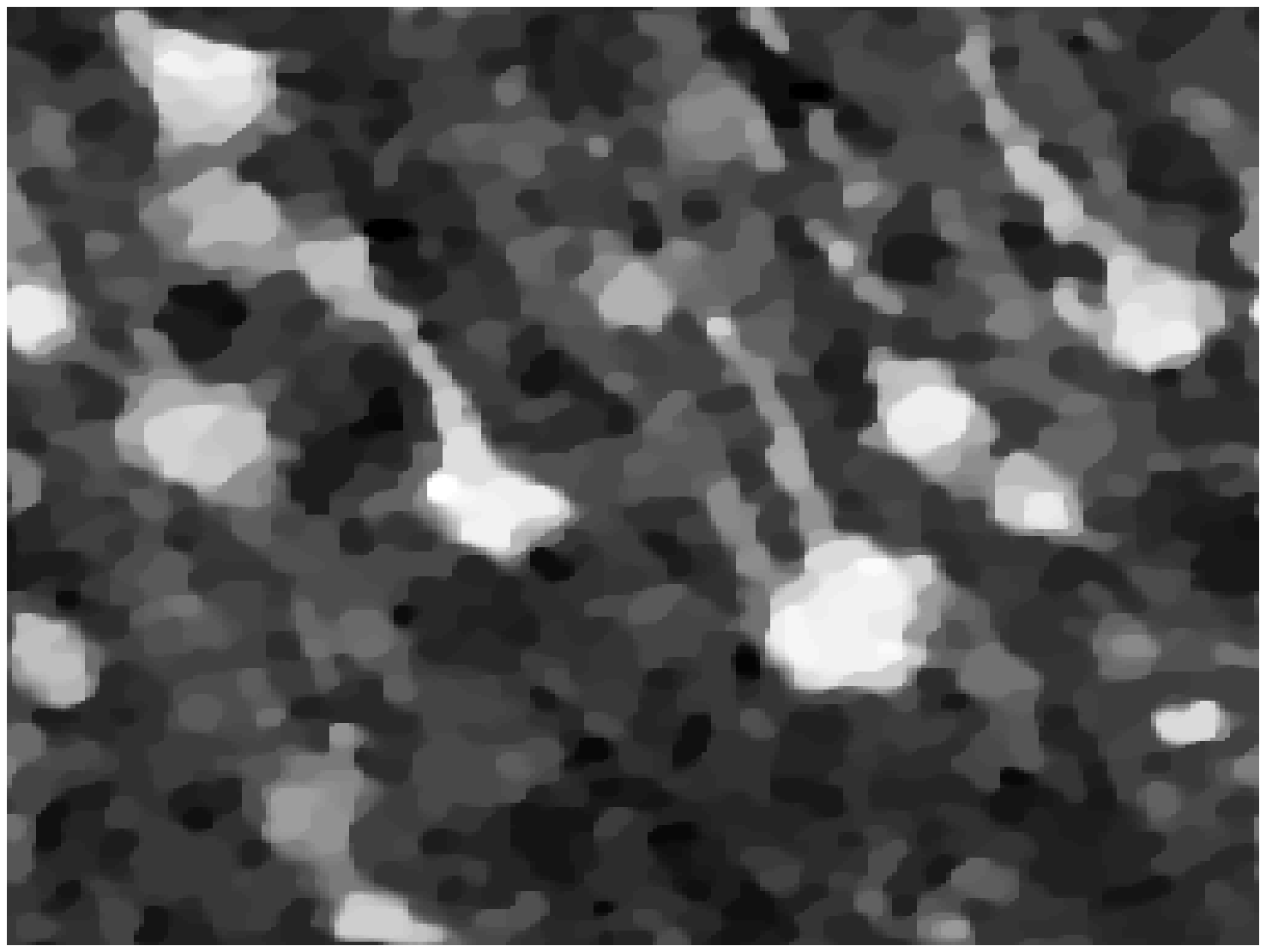} }
\end{tabular}
\caption{Restoration results for image $\bar{\mathbf{x}}_3$ using SPoiss approximation.}
\label{fig3}
\end{figure}


\begin{figure}[htb]
\centering
\begin{tabular}{cc}
\subfloat[][Degraded image with SNR=7.64 dB (Uniform kernel $5 \times 5$, $x^+=20$ and $\sigma^2=9$).]{\includegraphics[scale=0.45]{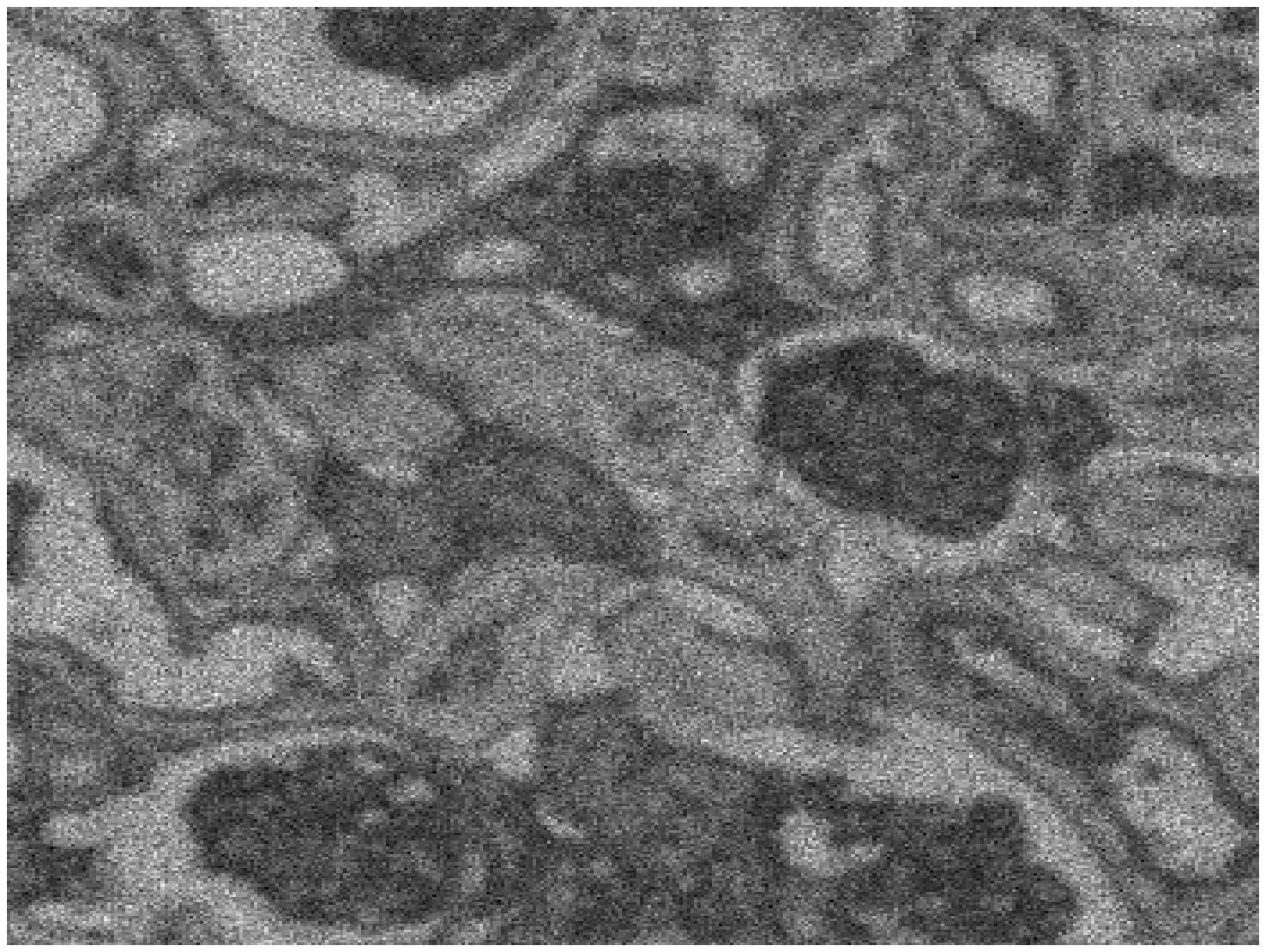} }&
 \subfloat[][Restored image with VBA approach using the diagonal approximation: SNR= 13.80 dB]{\includegraphics[scale=0.45]{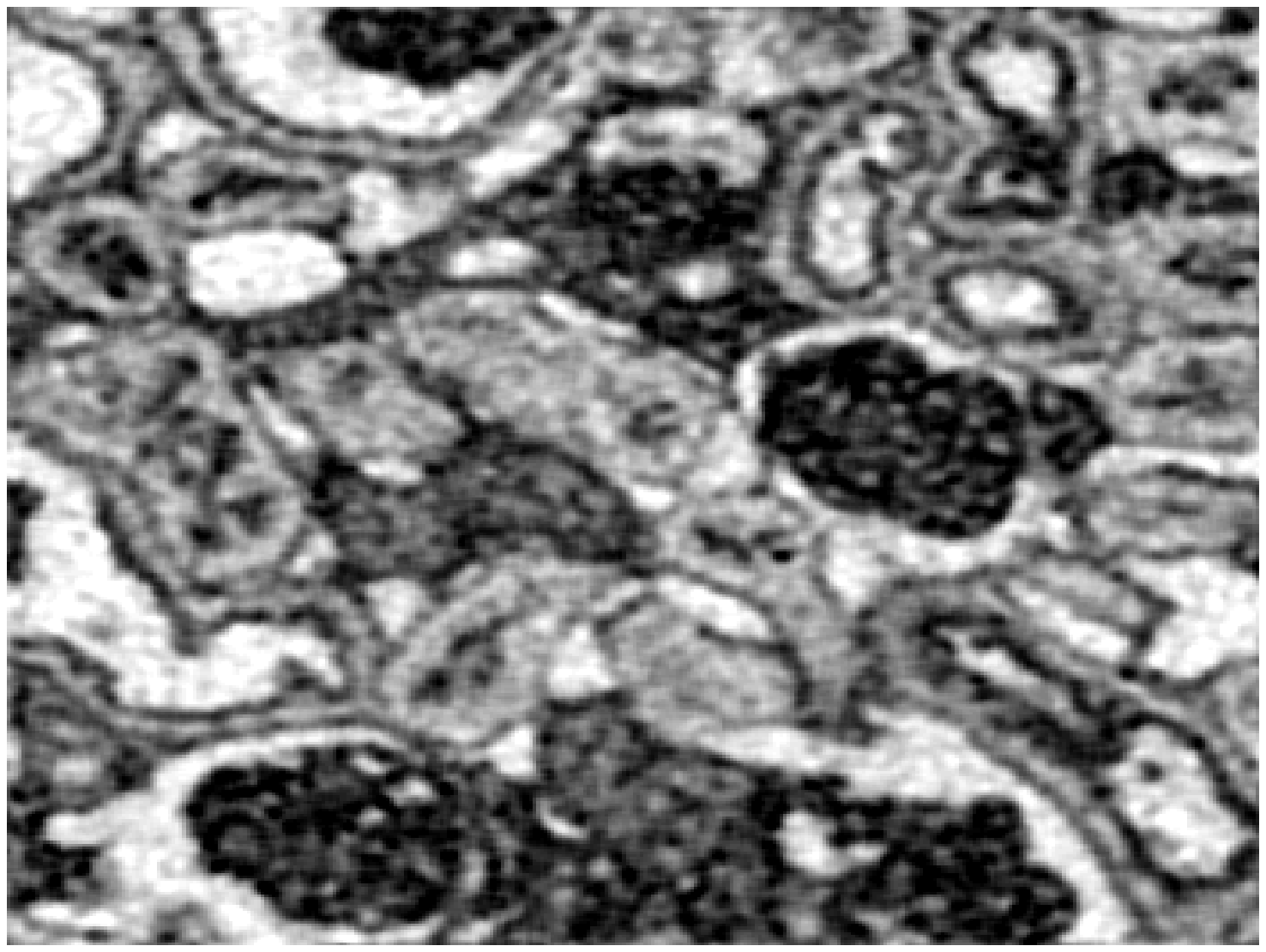} }\\
  \subfloat[][Restored image with discrepancy principle: SNR= 13.48 dB]{\includegraphics[scale=0.45]{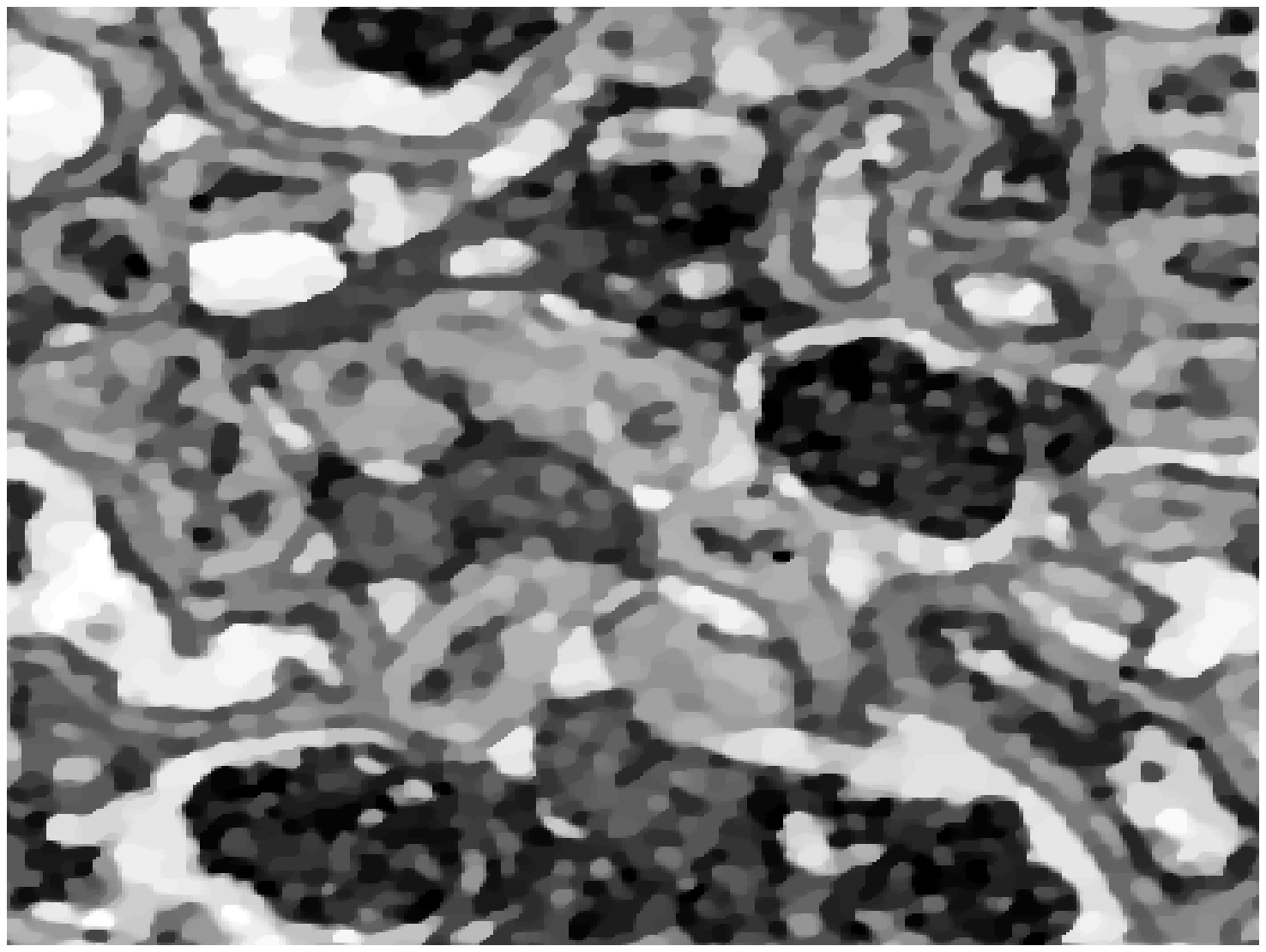} }&
   \subfloat[][Restored image with best parameter: SNR= 13.60 dB]{\includegraphics[scale=0.45]{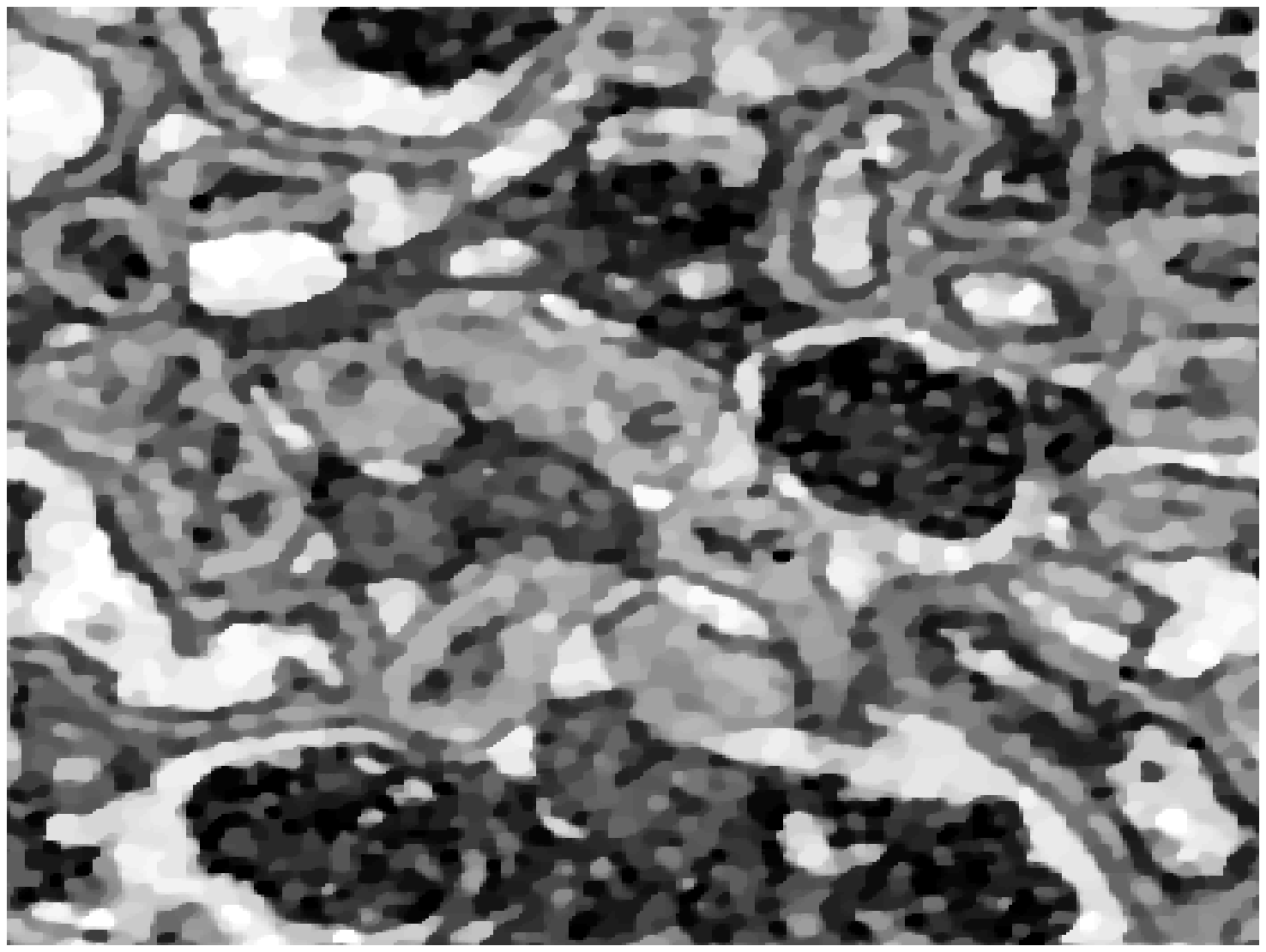} }
\end{tabular}
\caption{Restoration results for image $\bar{\mathbf{x}}_4$ using GAST approximation.}
\label{fig4}
\end{figure}

\begin{figure}[htb]
\centering
\begin{tabular}{cc}
\subfloat[][Degraded image with SNR=~8.55~dB (Gaussian kernel $7 \times 7$, std $1$. $x^+=20$ and $\sigma^2=9$).]{\includegraphics[scale=0.45]{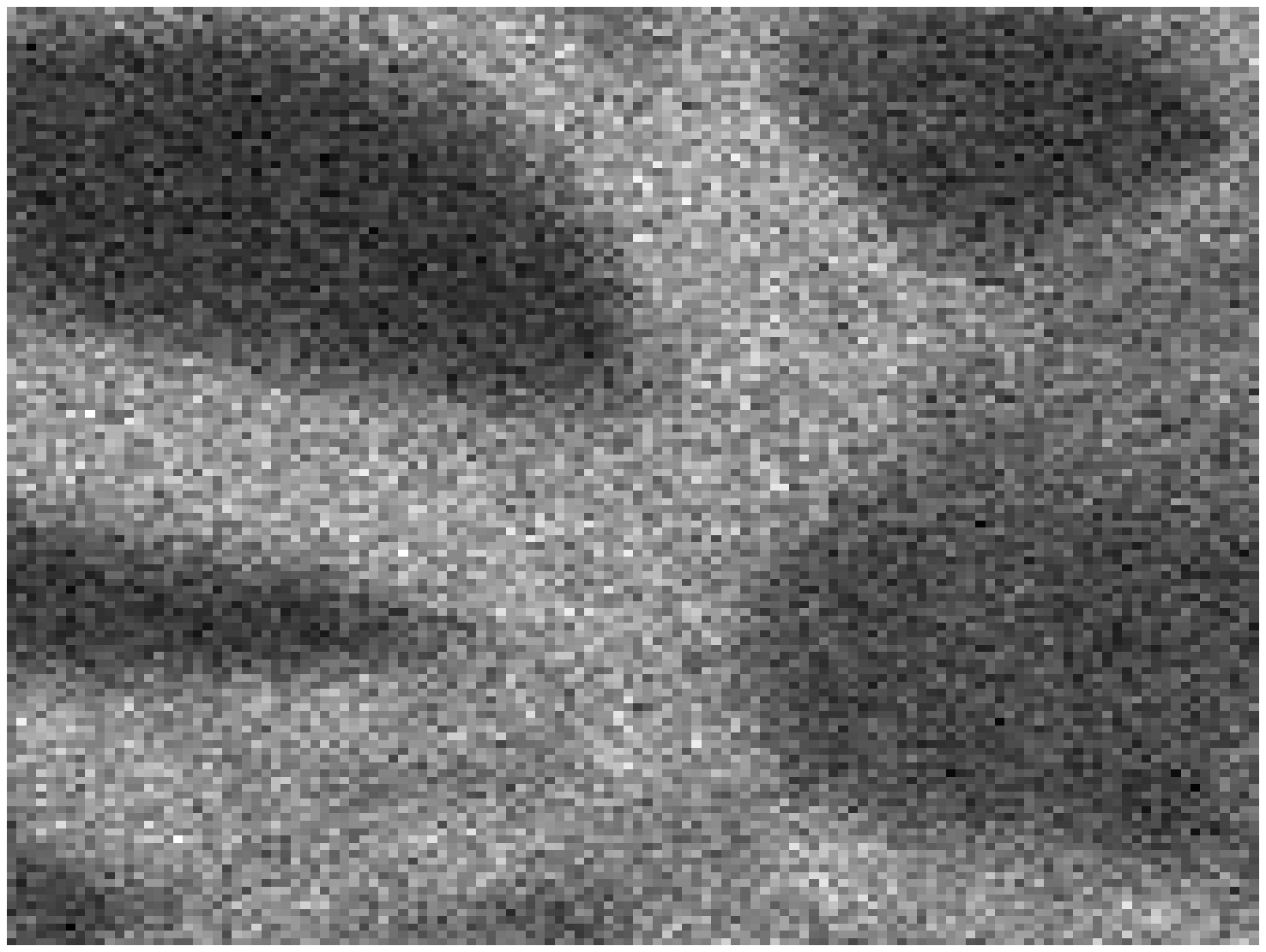} }&
 \subfloat[][Restored image with VBA approach using the diagonal approximation: SNR= 20.71 dB]{\includegraphics[scale=0.45]{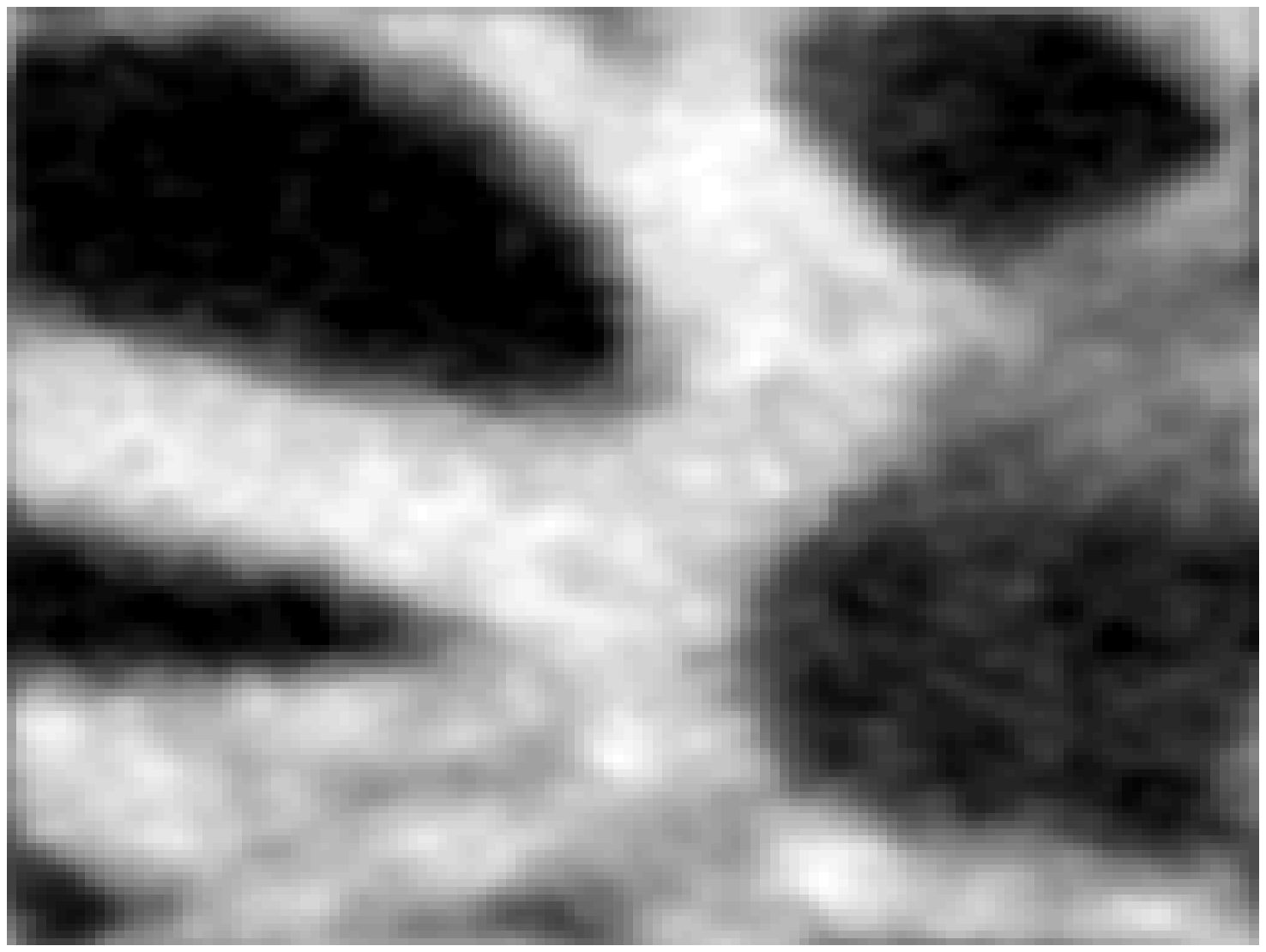} }\\
  \subfloat[][Restored image with discrepancy principle: SNR= 18.70 dB]{\includegraphics[scale=0.45]{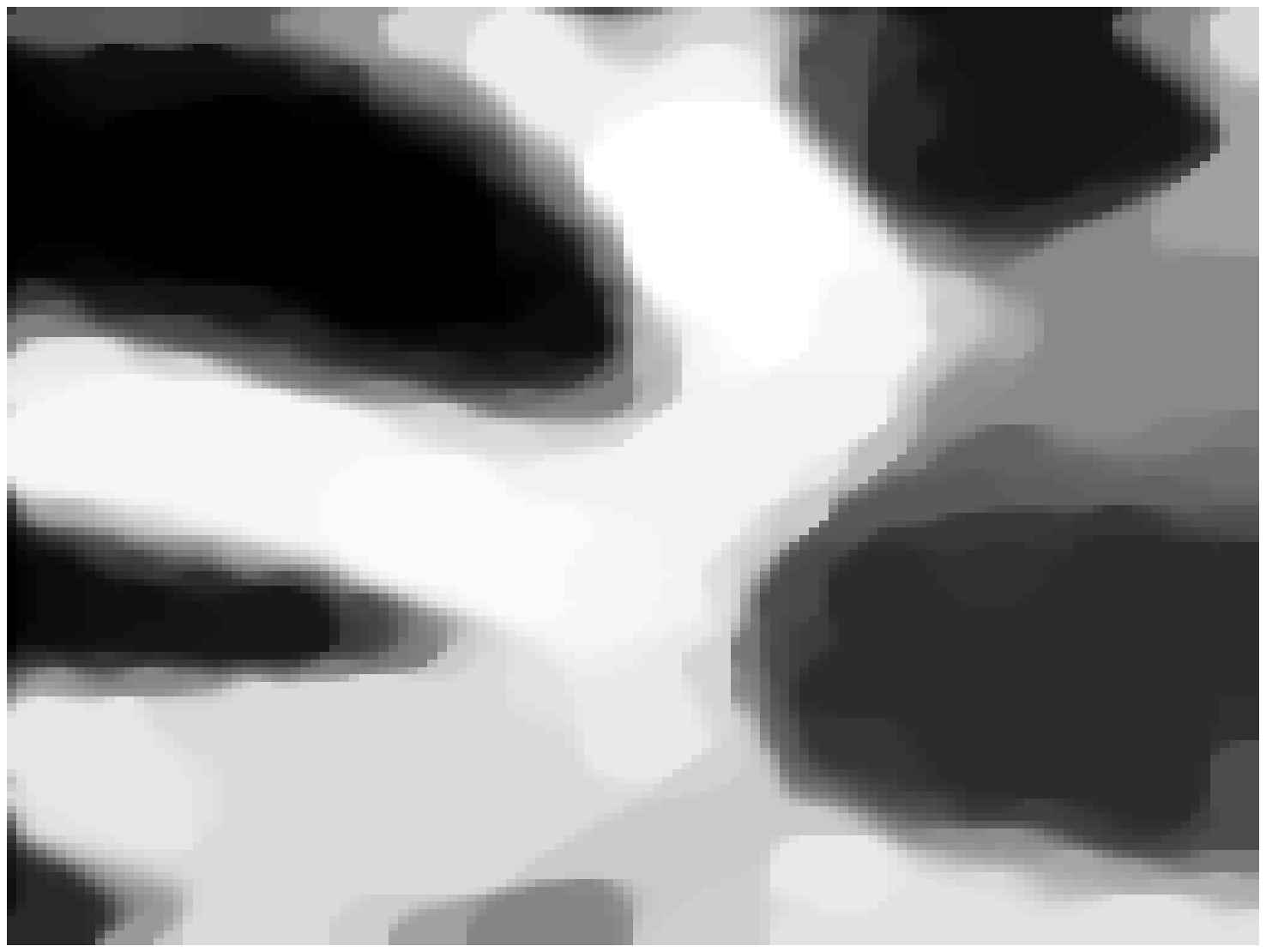} }&
   \subfloat[][Restored image with best parameter: SNR= 20.44 dB]{\includegraphics[scale=0.45]{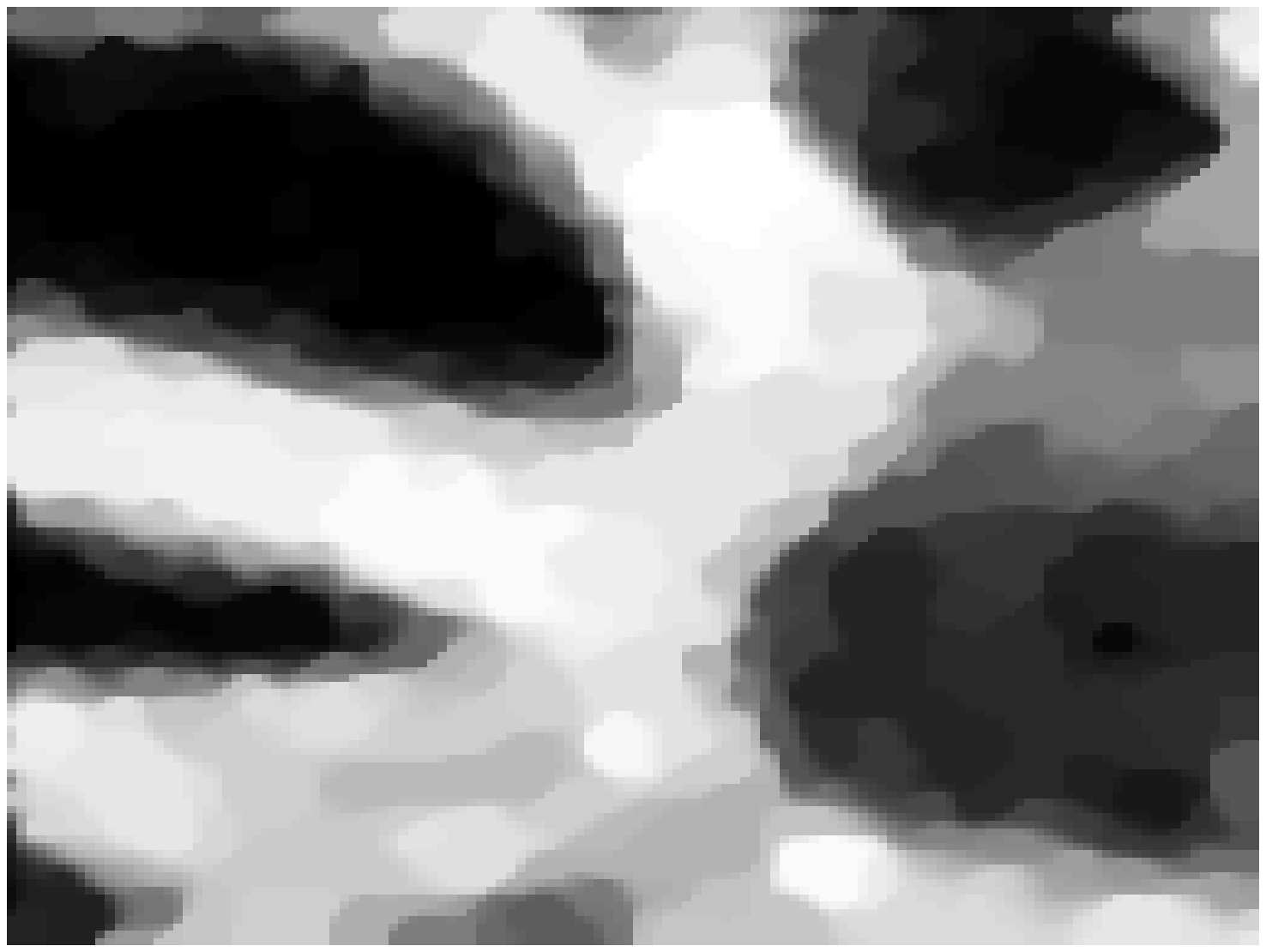} }
\end{tabular}
\caption{Restoration results for image $\bar{\mathbf{x}}_5$ using WL2 approximation.}
\label{fig5}
\end{figure}


\begin{figure}[htb]
\centering
\begin{tabular}{cc}
\subfloat[][Degraded image with SNR=~10.68~dB (Uniform kernel $3 \times 3$, $x^+=100$ and $\sigma^2=36$).]{\includegraphics[scale=0.45]{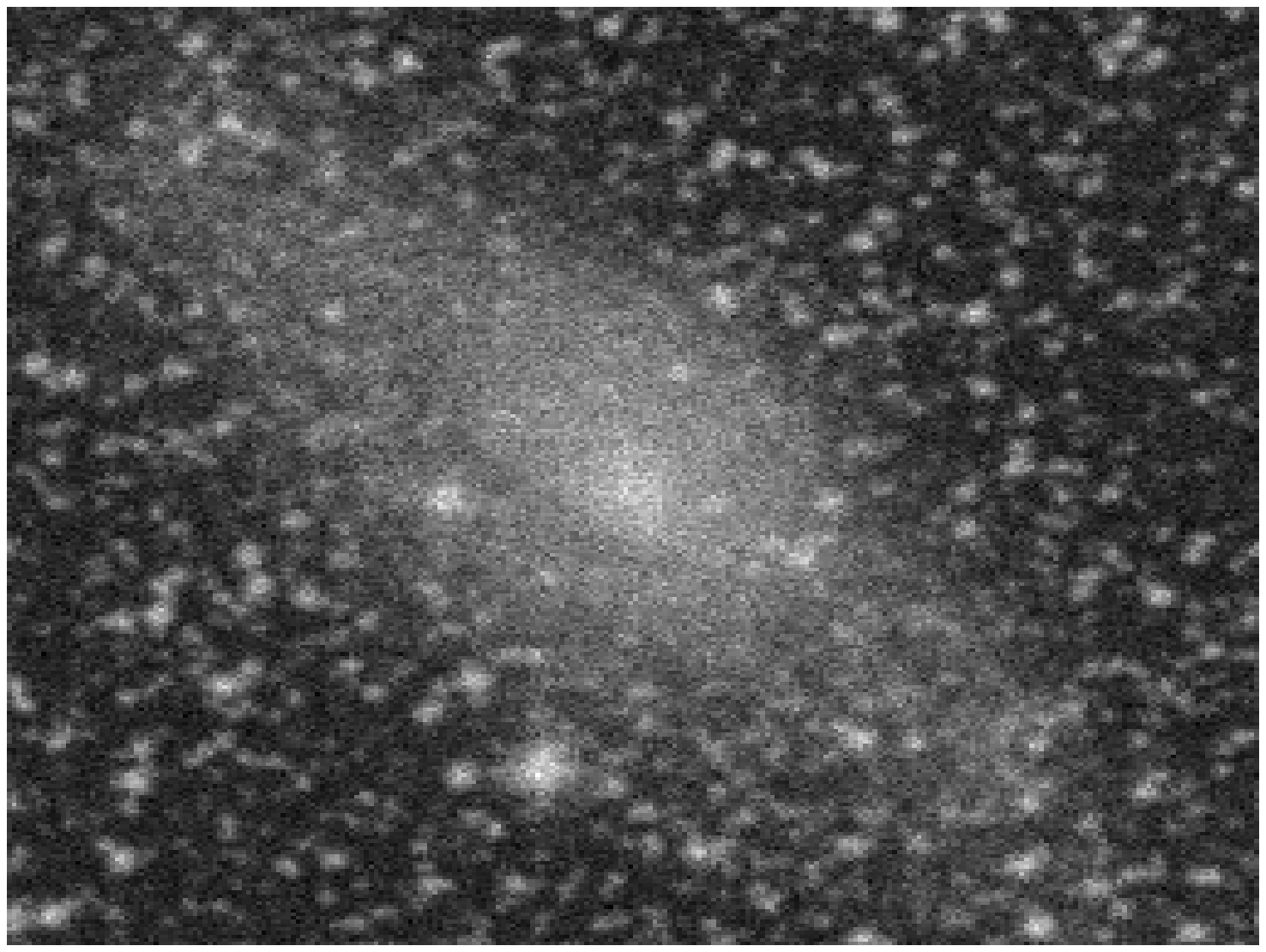} }&
 \subfloat[][Restored image with VBA approach using the Monte Carlo approximation with $640$ samples: SNR= 14.16 dB]{\includegraphics[scale=0.45]{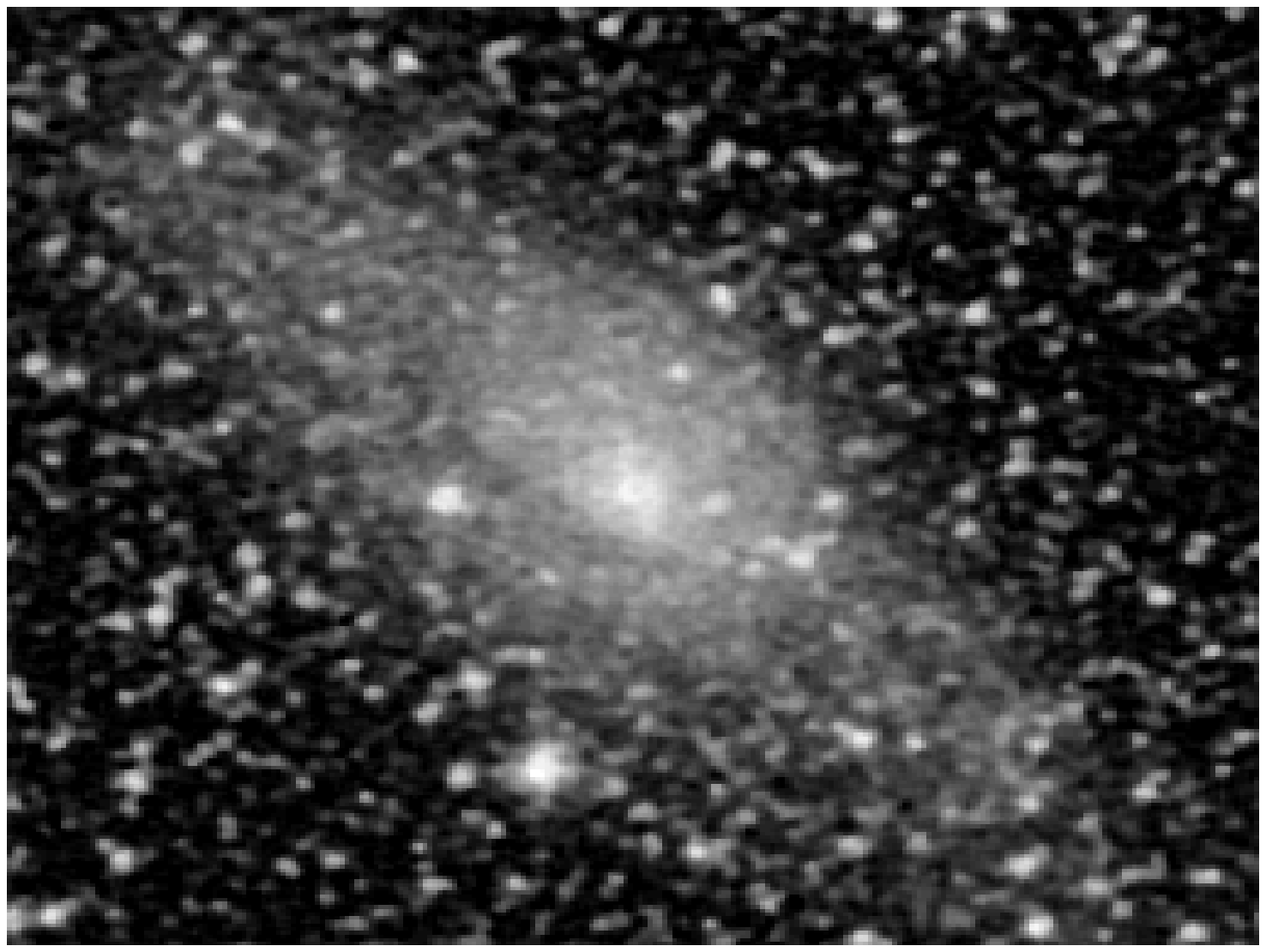} }\\
  \subfloat[][Restored image with discrepancy principle: SNR= 13.32 dB]{\includegraphics[scale=0.45]{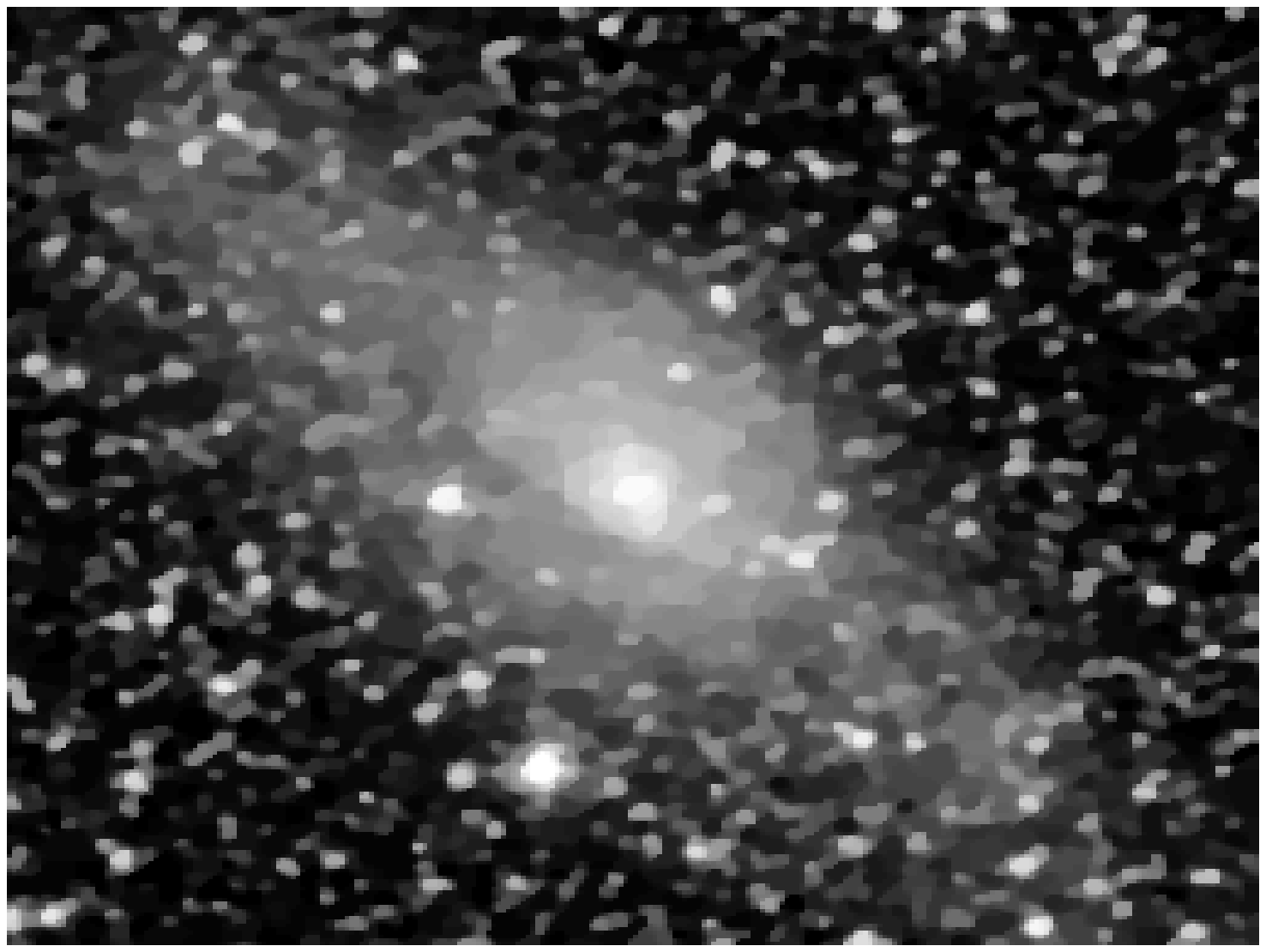} }&
   \subfloat[][Restored image with best parameter: SNR= 13.84 dB]{\includegraphics[scale=0.45]{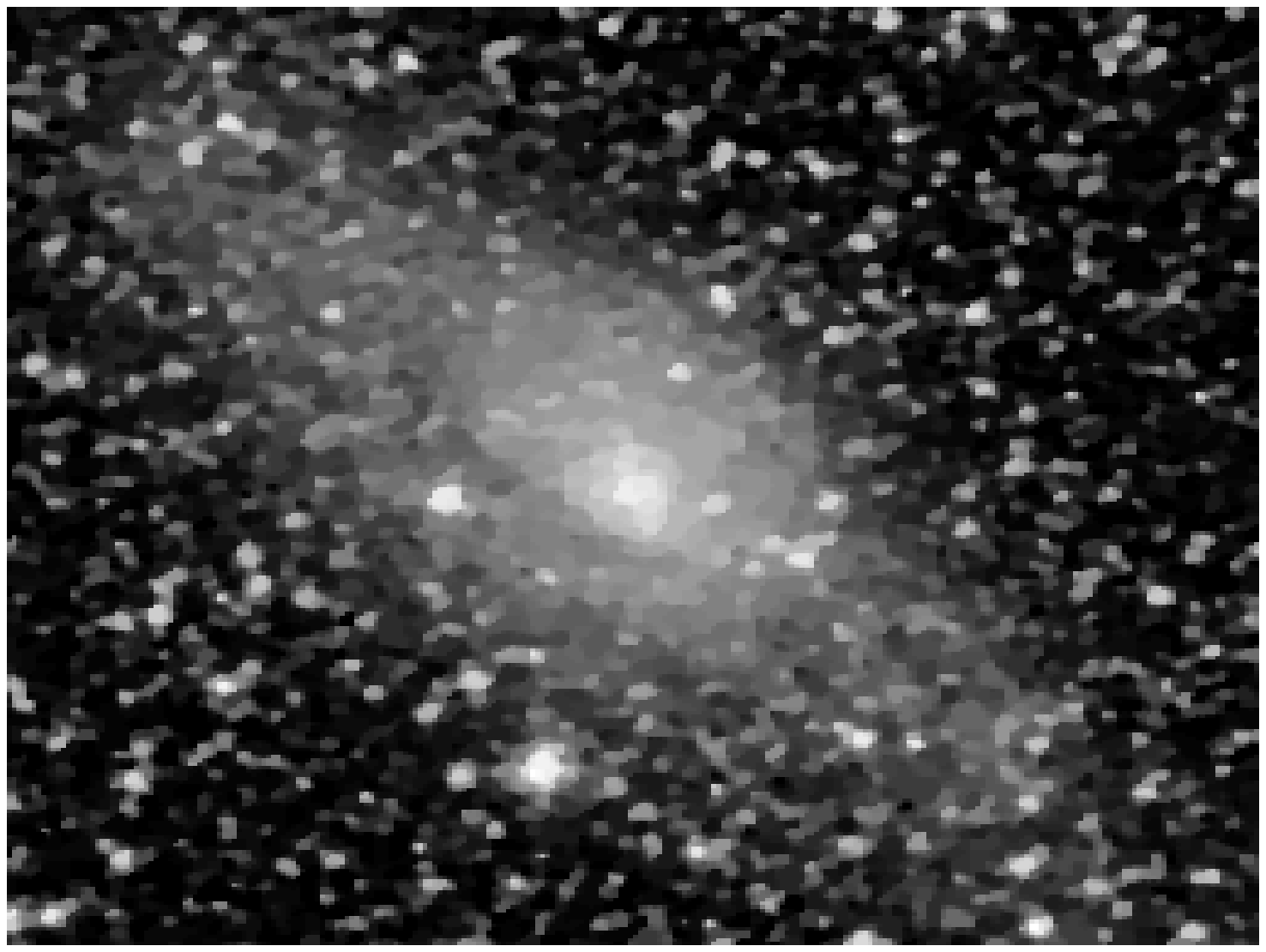} }
\end{tabular}
\caption{Restoration results for image $\bar{\mathbf{x}}_6$ using WL2 approximation.}
\label{fig6}
\end{figure}


In Figures \ref{fig1} - \ref{fig6}, we show some examples of visual results obtained with the different approaches, when the best approximation strategy for the covariance matrix is retained in the VBA method. It can be noticed that, unlike the other methods, the reconstructed images with the proposed VBA algorithm exhibit very few artifacts, without over-smoothed regions.

It should be emphasized that the problem of setting the regularization parameter for \f{MAP-based} algorithms must be carefully addressed as it highly impacts the quality of the restored image. The main advantage of our approach is that this parameter is tuned automatically without the need of the ground truth, while also often being the most competitive in terms of computation time. Furthermore, the performance of the proposed method could be further improved by using parallel implementation with more than 16 cores for the Monte Carlo approximation of the covariance matrix allowing
either generating a higher number of samples (i.e. an improved estimation error) or a reduction of the computation time. 

\f{Comparisons with image deblurring methods dedicated to a pure Poisson noise model have also been conducted. However, in our examples, they were observed to lead to poor results in terms of restoration quality, and to present a high computational time. For instance, the application of the proximal method from \cite{pustelnik2011parallel} using a TV prior and an empirical search for the regularization parameter, leads to an image with SNR equal to $12.88$ dB (computation time: $3489$ s.) on the test problem from Table \ref{t:5}, and a SNR of $18.37$ dB (computation time: $986$ s.) for the example from Table \ref{t:6}. The Plug and Play ADMM strategy from \cite{rond2015poisson} also leads to unsatisfactory results with a final SNR of $9.11$ dB (computation time: $1618$ s.) and $10.31$ dB (computation time: $204$ s.) for the examples from Table \ref{t:5} and Table \ref{t:6}, respectively. These numerical tests clearly highlight that image restoration in the presence of Poisson-Gaussian noise is challenging, and should be treated with specific methods that take into account the mixed noise model in an explicit manner.}

\subsubsection{Influence of the regularization term}
\f{The versatility of the proposed VBA method allows us to consider a large variety of regularization strategies, by defining appropriate prior operators $\mathbf{D}$. In the previous experiments, the TV prior has led to satisfactory results in terms of SNR, but a visual inspection of the restored versions of images $\mathbf{\bar{x}}_4$ and $\mathbf{\bar{x}}_6$ shows an undesirable starcasing effect. In this new set of experiments, we propose to compare these TV-based restoration results to those obtained with priors that have been recently shown to better preserve the natural features in images. Namely, we will consider the Hessian-based penalization~\cite{lefkimmiatis2012hessian}, the semi-local total variation (SLTV)~\cite{condat2014semi}, and the non-local total variation (NLTV) \cite{gilboa2008nonlocal, Chierchia_2014_nlst}. The Hessian prior operator is given, for every $j \in \left\lbrace 1,\ldots, N \right\rbrace$, by $\mathbf{D}_j\x  = \big[[\nabla^{hh} \x]_j, \sqrt{2} [\nabla^{hv}\x]_j , [\nabla^{vv} \x]_j\big]^\top \in \mathbb{R}^3$ where $\nabla^{hh}$, $\nabla^{hv}$ and $\nabla^{vv}$ model the second-order finite difference operators between neighbooring pixels, so that $S=3$ and $J=N$. The SLTV is based on differences of neighboring gradient values and is computed here using a $6$-pixels neighborhood, hence $S = 12$ and $J=N$. The NLTV prior operator is defined at every pixel position by a collection of weighted discrete gradient differences operators across a large set of directions, the weights being calculated according to a rough estimate of the target image. In our experiments, $49$ different directions are chosen and the corresponding weights are precomputed from the restored images using VBA with the TV prior and the diagonal approximation of the covariance matrix. As a result, $S=98$ and $J=N$ in that case. The SPoiss likelihood is chosen for the data fidelity term as it was observed to lead to the best tradeoff in terms of image quality and computational time in the previous set of tests. Table \ref{t:10} summarizes the obtained results for all the six test images, using the different considered priors. Complementary to these numerical results, Figures \ref{fig55} and \ref{fig56} show the visual improvements resulting from the different priors. One can observe that the NLTV prior gives in most experiments the best results in terms of SNR while the other priors perform quite similarly. In particular, for the image $\overline{\mathbf{x}}_5$, the gain in terms of SNR exceeds $2$ dB when using the NLTV prior, compared to the other regularization strategies. Note that despite small differences in SNR between the results obtained with the TV, SLTV and the Hessian regularizers, the Hessian and the SLTV appear to offer good alternatives in terms of visual quality to the TV prior for images that consist mostly of ridges and smooth transition of intensities. Indeed, it can be seen in Figure \ref{fig56} that the smooth piecewise constant areas are better reconstructed and the sharpness of edges is better maintained using these two priors. 
For textured images, Figure \ref{fig55} shows that the NLTV prior gives rise to less blurry images than the SLTV and Hessian priors and seems to reduce again the undesired staircase effect arising from TV regularization. However, as shown in Table \ref{t:10}, the approaches based on Hessian, SLTV and NLTV take much more computation time than the TV based approach in most test cases. Our suggestion would be to use the VBA approach with the TV prior and the diagonal approximation of the covariance matrix to obtain a satisfactory result in a low computational cost, and to use VBA with NLTV prior, using the former TV-based result to approximate the NLTV weights, in order to further improve the visual quality of the restored image. 
\begin{table}[t]
\tiny
\centering
\resizebox{0.9\textwidth}{!}{
\renewcommand{\arraystretch}{1.3}
\begin{tabular}{|l|l||l|l|l|l|l|l|}
\cline{3-8}
 \multicolumn{2}{c|}{ }& $\overline{\mathbf{x}}_1$& $\overline{\mathbf{x}}_2$ &$\overline{\mathbf{x}}_3$ &$\overline{\mathbf{x}}_4$ &$\overline{\mathbf{x}}_5$ &$\overline{\mathbf{x}}_6$ \\
\hhline{--|=|=|=|=|=|=|}
\multirow{3}{*}{\shortstack{ TV }}
&SNR & 10.20&19.12 & 12.36 & 13.90 & 20.67 & \textbf{14.19}\\
\cline{2-8}
&SSIM& 0.6088& 0.6930 & 0.4684 & 0.5769 & 0.7790 & \textbf{0.7650}\\
\cline{2-8}
&Time (s.)& 3507& 1828& 2051 & 34 & 184 & \textbf{479} \\
\hline
\hline
\multirow{3}{*}{\shortstack{ Hessian }}
&SNR & 10.17&\textbf{19.41} & 12.21& 13.56 & 20.57 & 14.05\\
\cline{2-8}
&SSIM& 0.6016 & \textbf{0.7300} & 0.4618 & 0.5501 & 0.8392&0.7643\\
\cline{2-8}
&Time (s.)& 8600& \textbf{5404}& 6974 & 5058 & 744 & 1332 \\
\hline
\hline
\multirow{3}{*}{\shortstack{ SLTV }}
&SNR & 10.32 & 19.26 & 12.26 & 13.53 & 20.62 & 13.93\\
\cline{3-8}
&SSIM&  0.6006& 0.7189 & 0.4656 &0.5478 & 0.8368&0.7578\\
\cline{2-8}
&Time (s.)& 6359& 2923 & 3497 & 1003 & 375 & 738 \\
\hline
\hline
\multirow{3}{*}{\shortstack{ NLTV }}
&SNR & \textbf{10.35}& 19.10& \textbf{12.46} & \textbf{14.09} & \textbf{22.89} & 13.95\\
\cline{2-8}
&SSIM&\textbf{0.4644} & 0.7075 & \textbf{0.4704} & \textbf{0.5812} & \textbf{0.7972} & 0.7530\\
\cline{2-8}
&Time (s.)& \textbf{7821}& 338 & \textbf{4602} & \textbf{8595} & \textbf{807} & 1547 \\
\hhline{|-|-------|}
\end{tabular}
}
\caption{Restoration results for the considered test images using the Spoiss likelihood and different regularization functions.}
\label{t:10}
\end{table}

\begin{figure}[htb]
\centering
\begin{tabular}{cc}
\subfloat[][Restored image with a TV prior: SNR= 13.90 dB]{\includegraphics[scale=0.45]{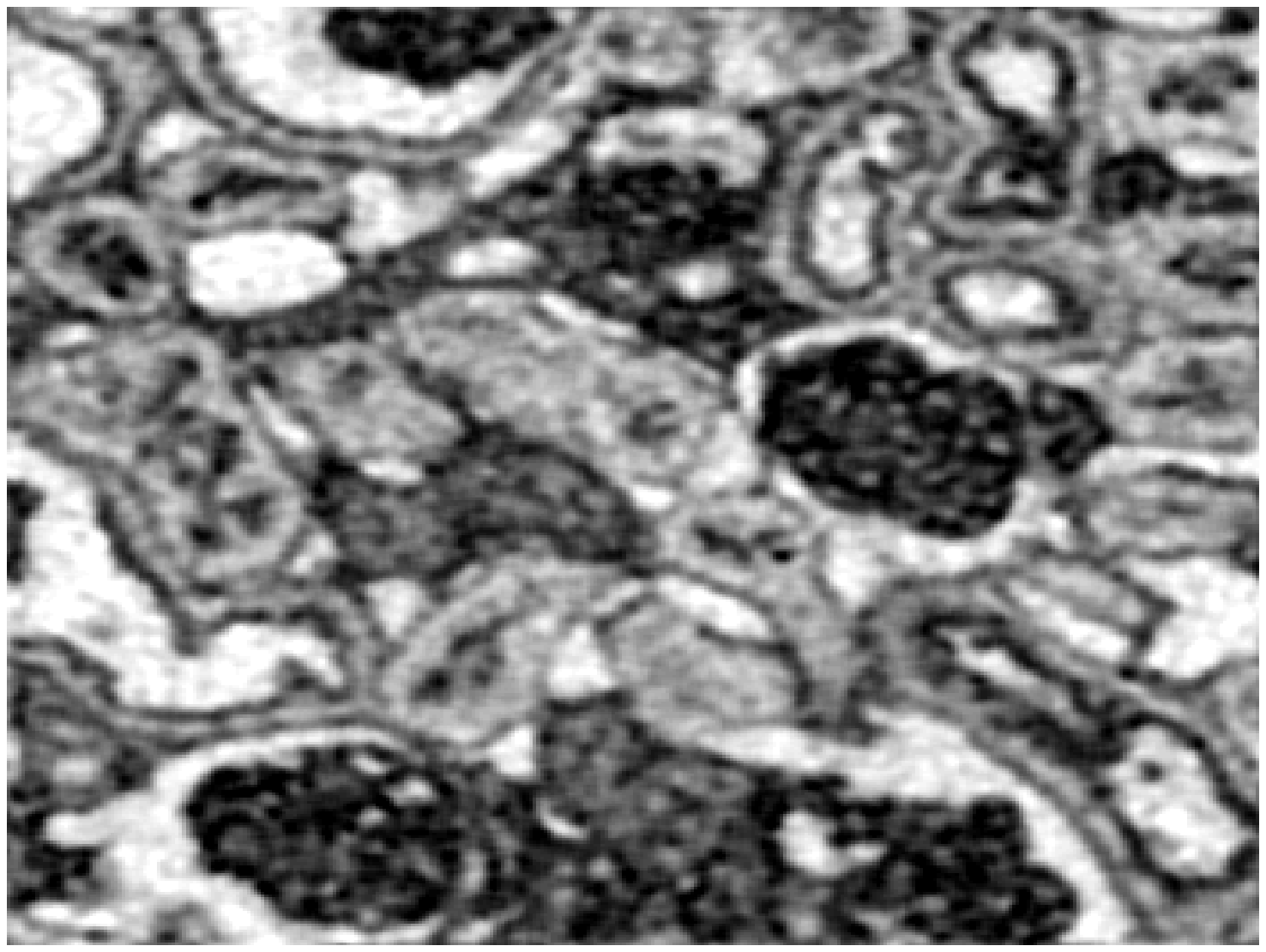} }&
 \subfloat[][Restored image with a Hessian prior: SNR= 13.56 dB]{\includegraphics[scale=0.45]{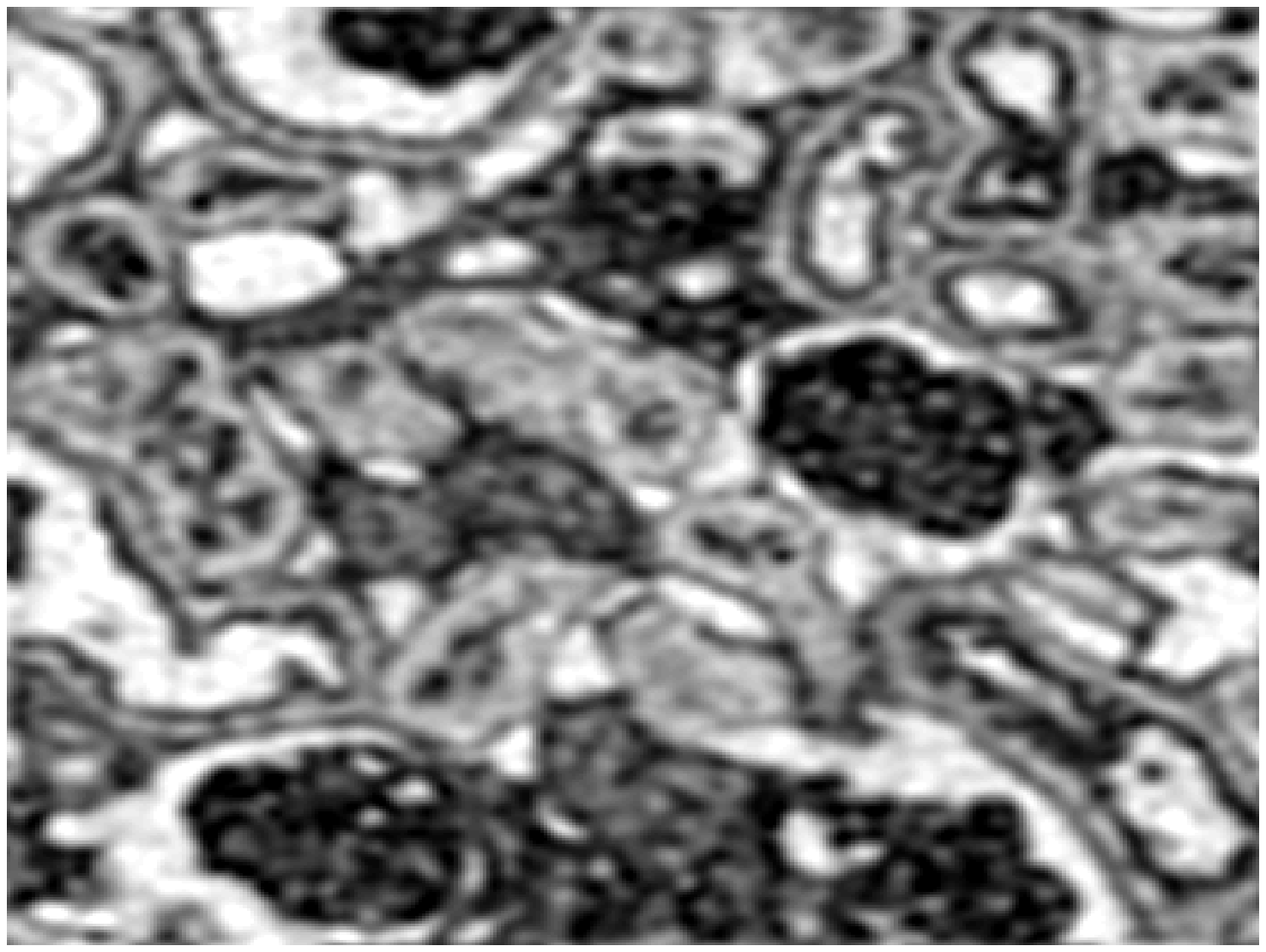} }\\
  \subfloat[][Restored image with a SLTV prior: SNR= 13.53 dB]{\includegraphics[scale=0.45]{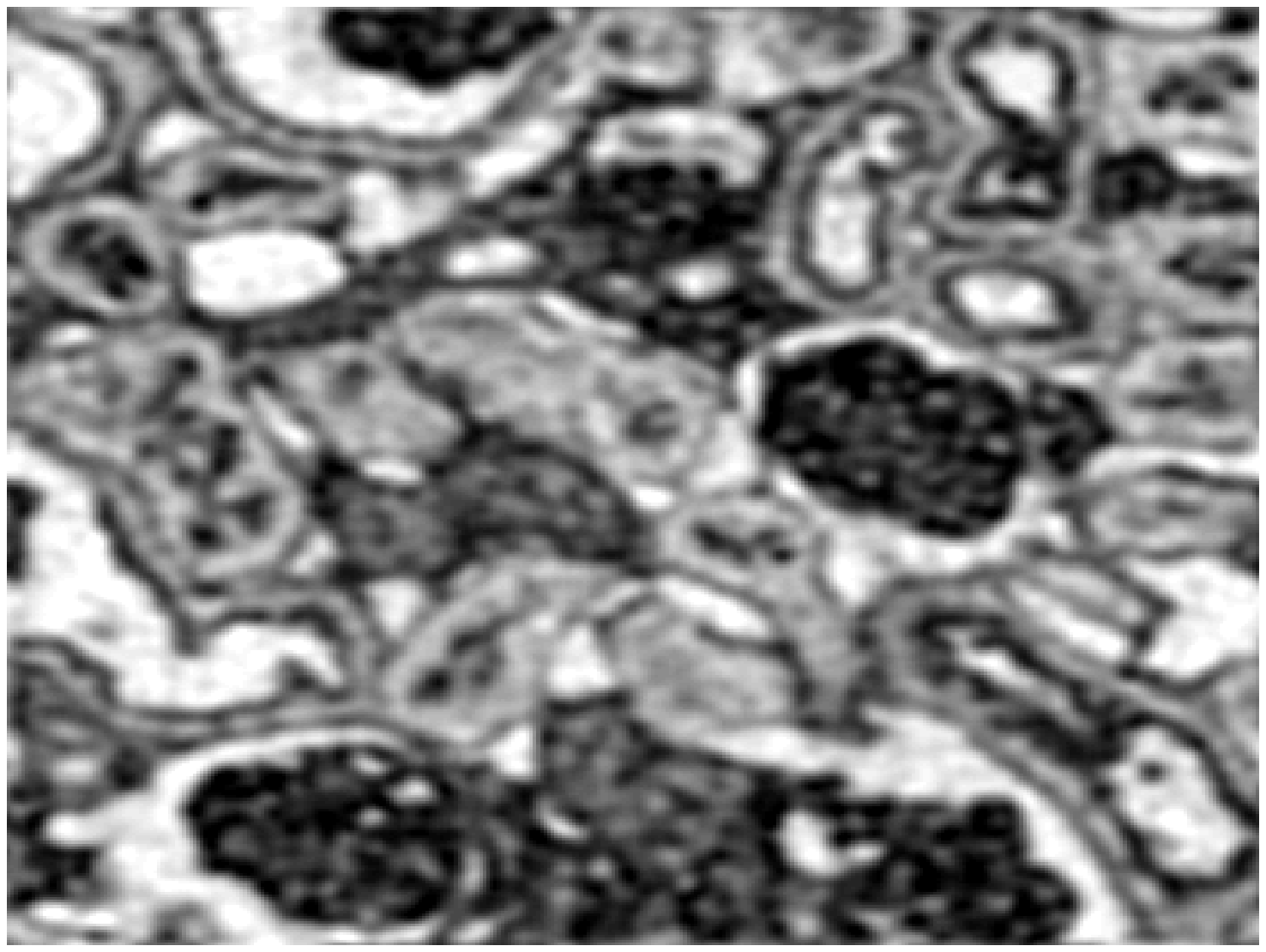} }&
   \subfloat[][Restored image with a NLTV prior: SNR= 14.09 dB]{\includegraphics[scale=0.45]{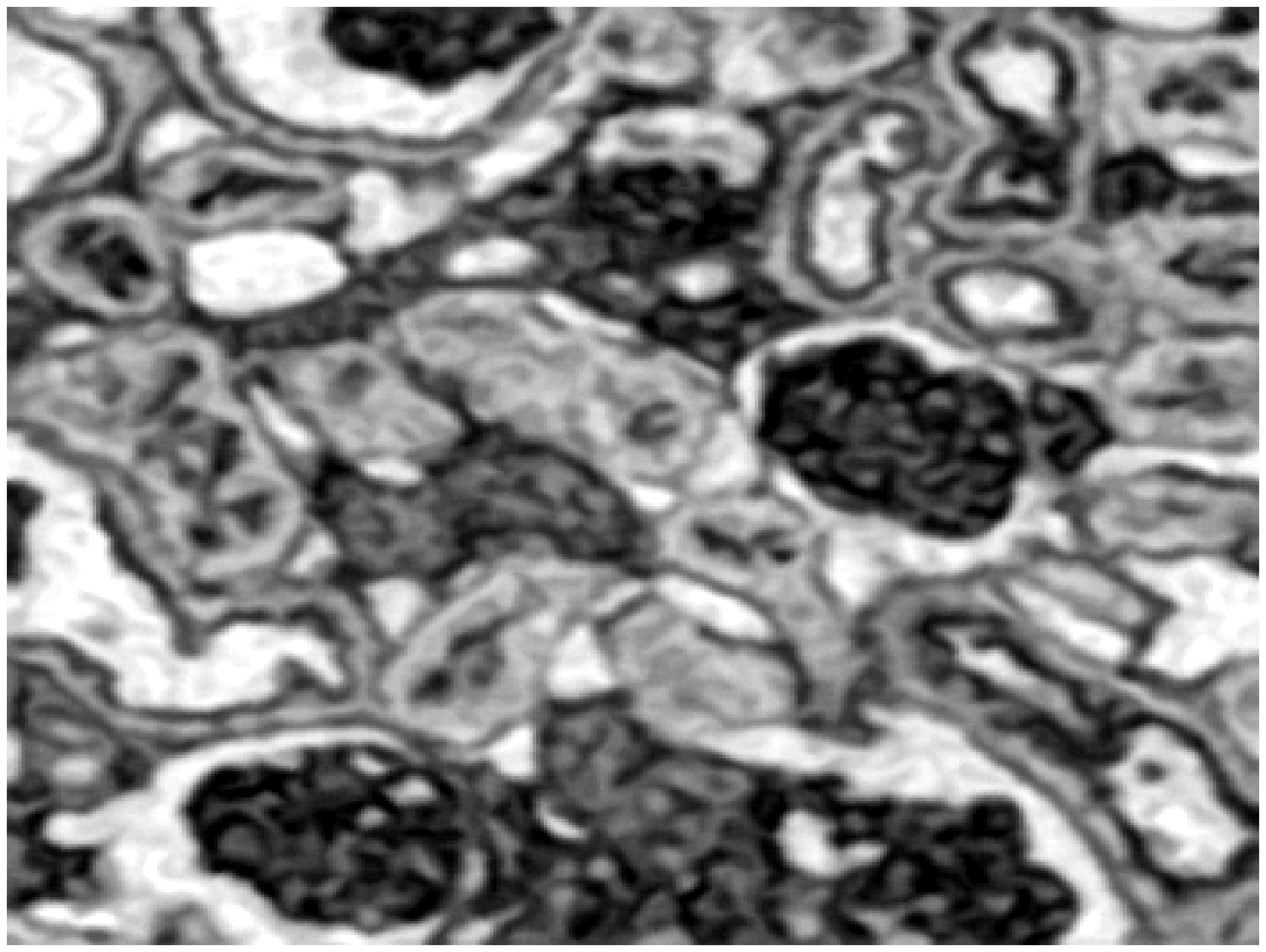} }
\end{tabular}
\caption{Restoration results for image $\bar{\mathbf{x}}_4$ using SPoiss likelihood and different regularization functions.}
\label{fig55}
\end{figure}

\begin{figure}[htb]
\centering
\begin{tabular}{cc}
\subfloat[][Restored image with a TV prior: SNR= 20.67 dB]{\includegraphics[scale=0.45]{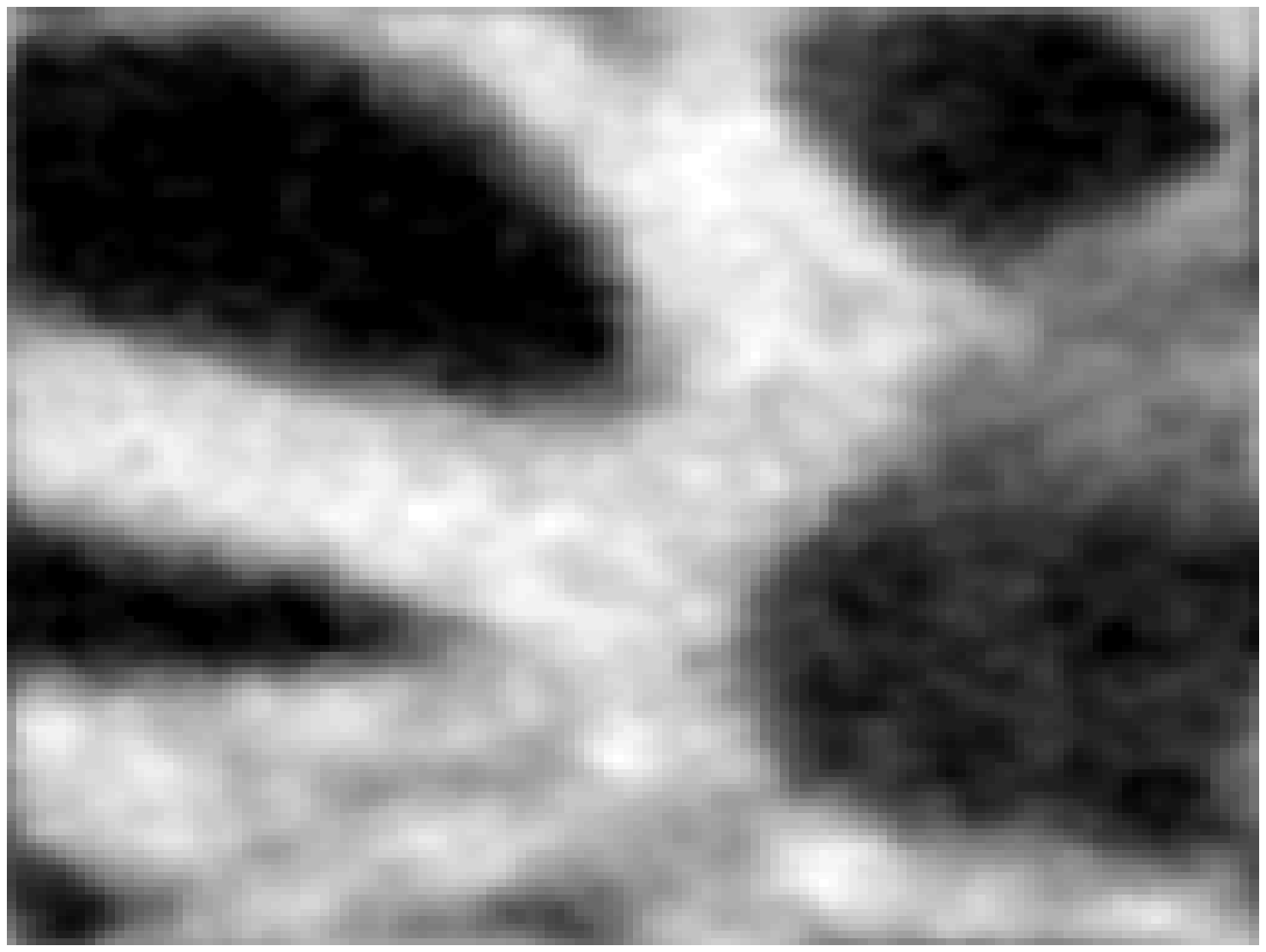} }&
 \subfloat[][Restored image with a Hessian prior: SNR= 20.57 dB]{\includegraphics[scale=0.45]{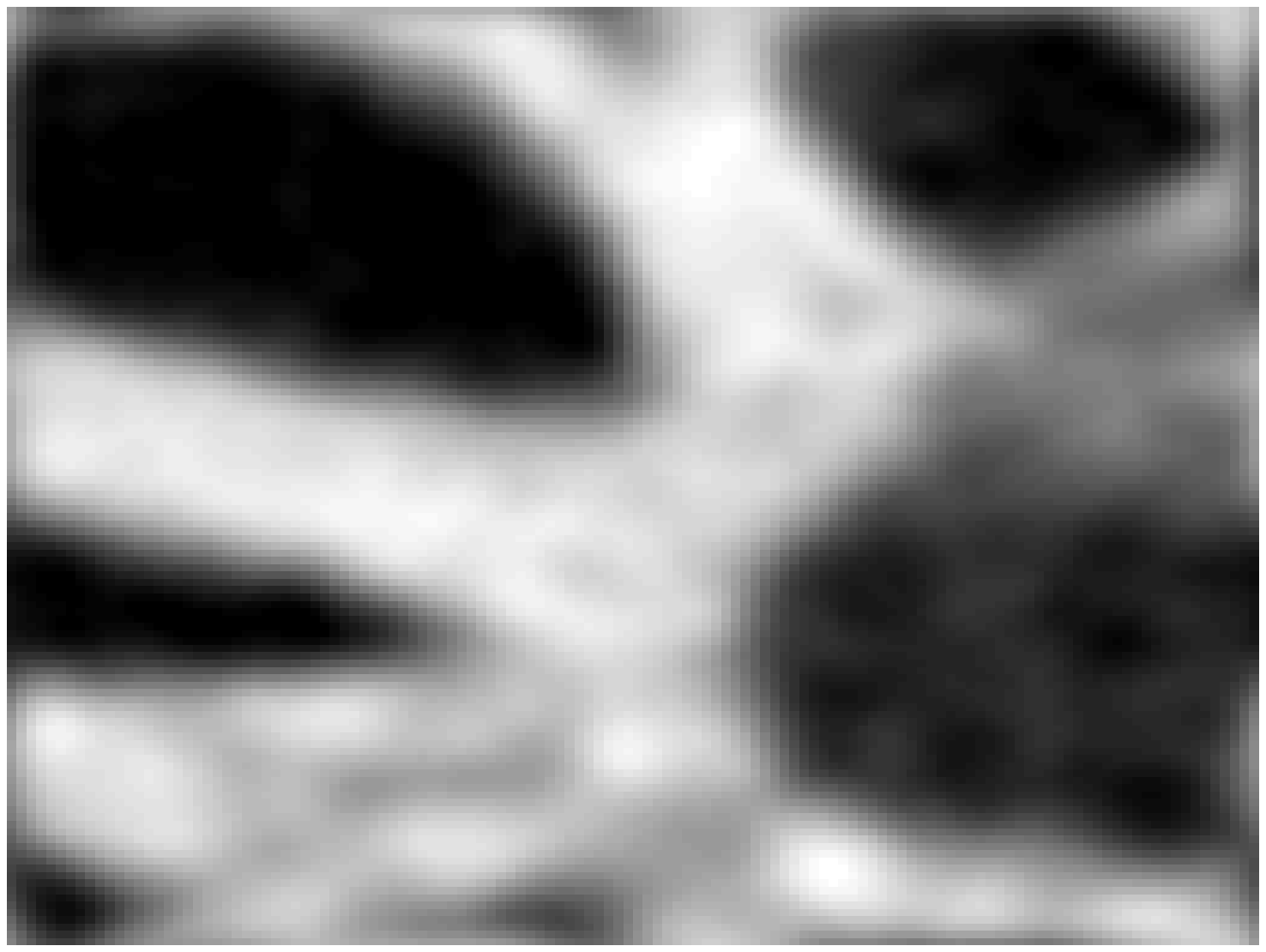} }\\
  \subfloat[][Restored image with a SLTV prior: SNR= 20.62 dB]{\includegraphics[scale=0.45]{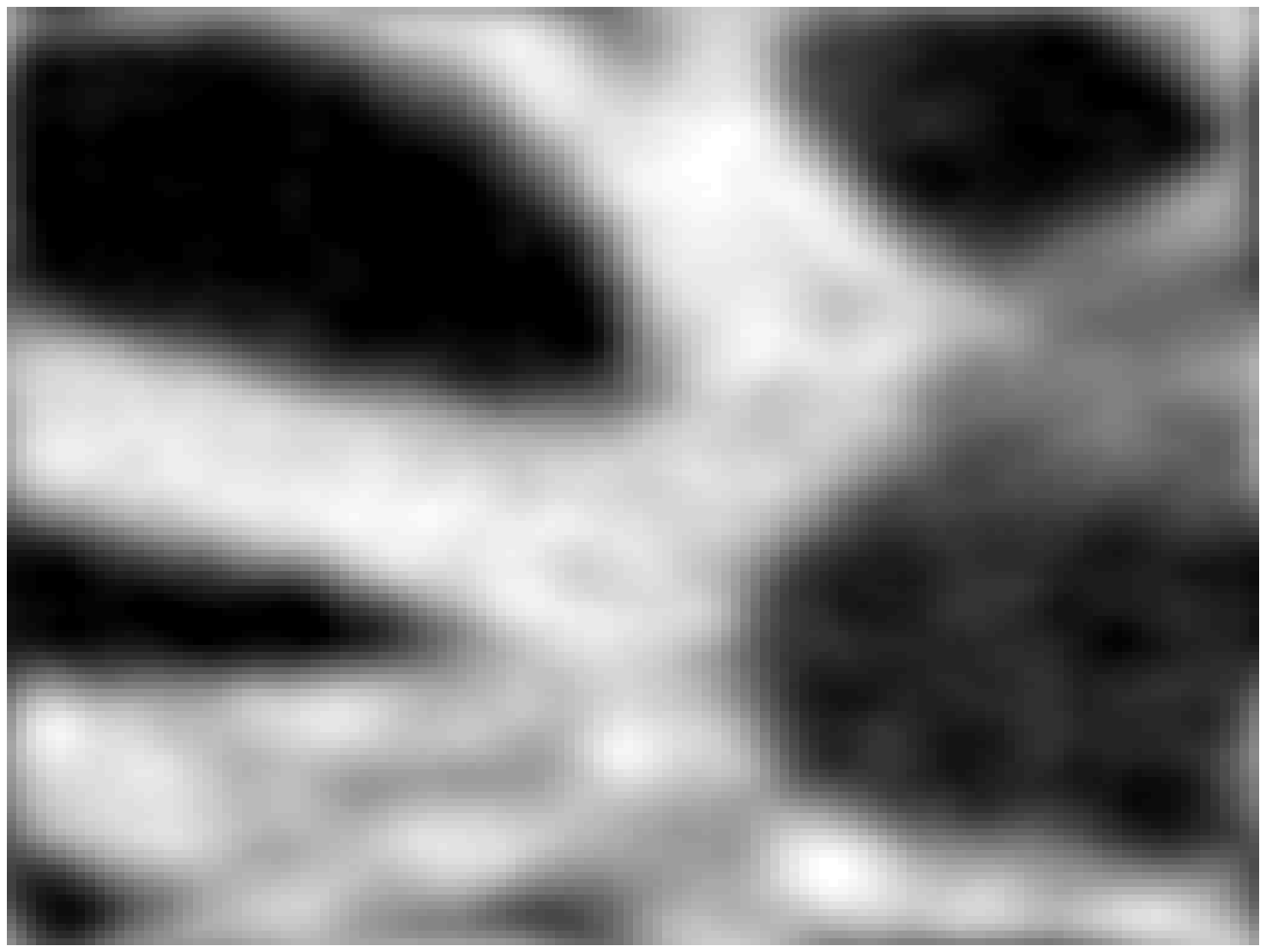} }&
   \subfloat[][Restored image with a NLTV prior: SNR= 22.89 dB]{\includegraphics[scale=0.45]{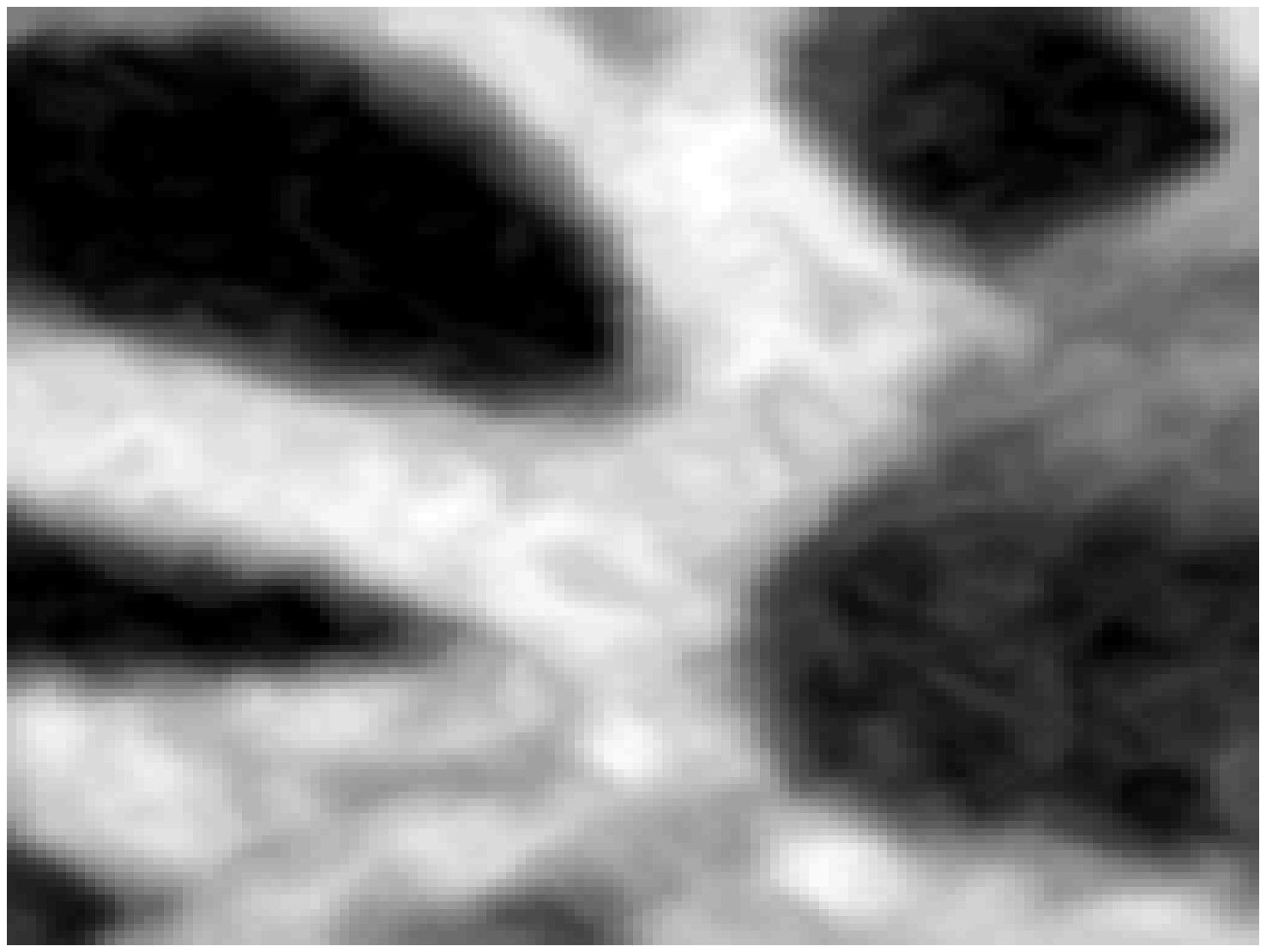} }
\end{tabular}
\caption{Restoration results for image $\bar{\mathbf{x}}_5$ using SPoiss likelihood and different regularization functions.}
\label{fig56}
\end{figure}

}

\section{Conclusion} \label{sec:Conclu}
In this paper, we have proposed a variational Bayesian approach for solving signal recovery problems in the presence of non-Gaussian noise. Our approach has two main advantages. First, 
the regularization
parameter 
is tuned automatically during the recovery process.
Second, the designed method is applicable to a wide range of prior distributions and data fidelity terms.
Simulations carried out on various images corrupted with mixed Poisson-Gaussian noise have shown that the proposed strategy constitutes a competitive solution for low computational and high-quality restoration of images compared with state-of-the art methods.

\appendix
\section{Proof of Proposition~\ref{prop_GY}}
\label{appA}

Let $i \in \{1, \ldots, M\}$. Let us define $g_i: \eR \rightarrow \eR$ such that
\begin{equation}
(\forall v \in \eR) \quad g_i(v) = \frac{v^2}{2} - \frac{\phi_i(v;y_i)}{\mu_i(y_i)}. \label{eq:gi}
\end{equation}
\f{According to Assumption \ref{assum1}, $g_i$ is convex, proper and lower semi-continuous (lsc)}. Its conjugate function \cite[Chapter 13]{bauschke2011convex} reads:
\begin{align}
(\forall w \in \eR) \quad g_i^*(w) 
& = \sup_{v \in \eR} \left(v w -g_i(v) \right) \label{eq:gistar}\\
& = \sup_{v \in \eR} \left(v w +\frac{\phi_i(v;y_i)}{\mu_i(y_i)} -\frac{v^2}{2} \right)\\
& = \sup_{v \in \eR} \left(- \frac{1}{2}(v-w)^2 + \frac{\phi_i(v;y_i)}{\mu_i(y_i)} \right) + \frac{w^2}{2}.
\end{align}
According to Definition \eqref{eq:varsigma},
\begin{equation}
(\forall w \in \eR) \quad g_i^*(w) = \varsigma_i(w;y_i) + \frac{w^2}{2}. \label{eq:gstar}
\end{equation}
The conjugate of $g_i^*$ is
\begin{align} \label{eq:gconj}
(\forall v \in \eR) \quad g_i^{**}(v) & = \sup_{w \in \eR} \big(vw -g_i^*(w) \big)\\
& = \sup_{w \in \eR} \left(vw - \frac{w^2}{2} -  \varsigma_i(w;y_i)  \right) \nonumber\\
& = \sup_{w \in \eR} \left( - \frac{1}{2}(v-w)^2   -  \varsigma_i(w;y_i) \right) + \frac{v^2}{2} \nonumber\\
& = - \inf_{w \in \eR} \left(\frac{1}{2}(v-w)^2 +  \varsigma_i(w;y_i) \right) +\frac{v^2}{2}.
\end{align}
\f{Since $g_i$ is convex, proper and lsc \cite[Theorem 13.32]{bauschke2011convex}}, $g_i = g_i^{**}$ so that
\begin{equation}
(\forall v \in \eR) \quad - \frac{\phi_i(v;y_i)}{\mu_i(y_i)} = - \inf_{w \in \eR} \left(\frac{1}{2}(v-w)^2 +\varsigma_i(w;y_i) \right)
\end{equation}
which is equivalent to
\begin{equation}
(\forall v \in \eR) \quad \phi_i(v;y_i) = \mu_i(y_i) \inf_{w \in \eR} \left(\frac{1}{2}(v-w)^2 + \varsigma_i(w;y_i)\right), \label{eq:psi_zeta}
\end{equation}
so that \eqref{eq:GY} holds. 

For every $v\in \mathbb{R}$, let
\begin{align}
\widehat{w}_i(v) = g_i'(v). \label{eq:what0}
\end{align}
The function \f{$g_i$ being convex, proper and lsc, according to \cite[Corollary 16.24]{bauschke2011convex}, 
the above relation can be reexpressed by making use of the subdifferential  $\partial g_i^*$ of the convex function $g_i^*$
(see \cite[Chapter 16]{bauschke2011convex} for more details).
More precisely, \eqref{eq:what0} is equivalent
to 
\begin{align}
v \in \partial g_i^*\big(\widehat{w}_i(v)\big). \label{eq:what2}
\end{align}
According to Fermat's rule \cite[Theorem 16.2]{bauschke2011convex}, \eqref{eq:what2} is a necessary and sufficient condition for $ \hat{w}_i(v)$ to be
a minimizer of the convex function $w \mapsto g_i^*(w) - v w$.}


This minimizer is unique since $\widehat{w}_i(v)$ is uniquely
defined by \eqref{eq:what0}. We have therefore established that
\begin{equation}
\widehat{w}_i(v) = \argmax{w \in \eR}{\big(vw -g_i^*(w) \big)}.
\end{equation}
The definition of $g_i$ in \eqref{eq:gi} shows that \eqref{eq:what0} also reads
\begin{equation}
\widehat{w}_i(v) = v - \frac{1}{\mu_i(y_i)} \phi_i'(v ; y_i).  \label{eq:what}
\end{equation}
According to \eqref{eq:gconj}, it is straightforward that $\widehat{w}_i(v)$ also reaches the infimum in \eqref{eq:psi_zeta}.
Hence the result by using \eqref{eq:what} and \eqref{e:defTi}.
 \bibliographystyle{IEEEbib}
 \bibliography{ref_icassp2015}

\end{document}